\documentclass[preprint,12pt]{elsarticle}

\usepackage{ifpdf}
\usepackage{amssymb}
\usepackage{amsthm}
\usepackage{xcolor}
\usepackage{lipsum}
\usepackage{amsfonts}
\usepackage{graphicx}

\usepackage{algorithmic}
\usepackage{enumerate}
\usepackage{multirow}
\usepackage{array}
\usepackage{booktabs}  
\usepackage{multirow}  
\usepackage{algorithm}
\usepackage{amsmath}
\usepackage[hidelinks]{hyperref}
\usepackage[capitalize,nameinlink]{cleveref}
\usepackage{amsfonts}
\usepackage{mathrsfs}
\usepackage{tikz}
\usetikzlibrary{matrix}
\counterwithin{figure}{section}
\counterwithin{table}{section}

\usepackage{subcaption}

\ifpdf
  \DeclareGraphicsExtensions{.pdf,.png,.jpg}
\else
  \DeclareGraphicsExtensions{.eps}
\fi

\theoremstyle{plain}
\newtheorem{theorem}{Theorem}[section]
\newtheorem{lemma}[theorem]{Lemma}
\newtheorem{corollary}[theorem]{Corollary}
\newtheorem{proposition}[theorem]{Proposition}
\newtheorem{definition}[theorem]{Definition}

\theoremstyle{remark}
\newtheorem{remark}[theorem]{Remark}

\newtheorem{example}[theorem]{\textbf{Example}}
\crefname{hypothesis}{Hypothesis}{Hypotheses}
\crefname{fact}{Fact}{Facts}

\numberwithin{equation}{section}
\numberwithin{algorithm}{section}

\usepackage{amsopn}

\hypersetup{
  pdftitle={SC-NuF-CFEI},
  pdfauthor={X. Cai, and L. Kong}
}
\journal{Journal of Computational Physics}

\begin{document}
\begin{frontmatter}

\title{Self-consistent flow maps based on commutator-free exponential integrators\tnoteref{submitted}\\[5pt]
\small\textit{%
Dedicated to Professor Weiwei Sun on his 70th birthday.}}
\tnotetext[submitted]{Submitted to the editors on October 26, 2026.}

\author[aff1]{Xiaofeng Cai}
\ead{xfcai@bnu.edu.cn}
\address[aff1]{Research Center for Mathematics, Faculty of Arts and Sciences, Beijing Normal University, Zhuhai, 519087, P.R. China}

\author[aff2]{Linghui Kong}
\ead{linghuikong@mail.bnu.edu.cn}
\address[aff2]{Department of Mathematics, Faculty of Arts and Sciences, Beijing Normal University, Zhuhai, 519087, P.R. China}

\begin{abstract}
We develop a flow-map framework for nonlinear transport equations,
comprising a self-consistent exact frozen flow and its numerical realization.
An exponential--pullback identity represents each frozen transport exponential in a commutator-free exponential integrator 
as the pullback induced by its exact backward characteristic map.
Composing these maps defines the self-consistent exact frozen flow, 
a time-discrete construction with autonomous frozen fields formed from linear combinations of transport fields evaluated at the stage approximations.
A local freezing-error identity and a one-step estimate for the
Lie--Euler Vlasov--Poisson update separate field variation, field approximation,
and characteristic integration errors; these estimates do not assert
global PDE convergence.
We specialize this construction to the Vlasov--Poisson system 
and obtain the self-consistent numerical flow through velocity-space quadrature, a discrete Poisson solve, field interpolation, and numerical integration of the frozen characteristic equations.
Under suitable regularity assumptions, we prove that the self-consistent exact frozen flow for this system is a volume-preserving diffeomorphism.
Temporal self-convergence tests on Vlasov--Poisson and guiding-center
benchmarks yield rates consistent with the design orders 
and illustrate conservation behavior and the evolution of solution structures in long-time simulations with large time steps.
\end{abstract}

\begin{keyword}
exponential--pullback identity \sep
commutator-free exponential integrators \sep
characteristic flow maps \sep 
volume-preserving \sep 
nonlinear transport equations \sep
Vlasov dynamics

\MSC 65M25 \sep 65M12 \sep 65L06 \sep 35Q83
\end{keyword}

\end{frontmatter}

\section{Introduction}
\label{sec:Intro}

This paper develops a flow-map framework for transport equations of the form 
$\partial_t u + b[u]\cdot\nabla u = 0$, 
where the transport field $b[u]$ depends on the evolving unknown $u$.
Representative examples include the Vlasov--Poisson system in collisionless plasma physics \cite{Glassey1996},
where the phase-space distribution is transported under a self-consistent electrostatic field,
and the vorticity formulation of the two-dimensional incompressible Euler equations \cite{Lowengrub1993},
where the vorticity is advected by the velocity field it induces.
The solution dependence of the transport field couples the computation of the characteristic flow with the evolution of the transported quantity, 
requiring a consistent numerical treatment.
In addition, for sufficiently regular divergence-free transport fields, the associated characteristic flow preserves volume,
a property that a numerical flow approximation need not inherit.


A classical strategy for treating the self-consistent coupling is 
to construct frozen transport fields from numerical stage approximations,
thereby approximating the nonlinear evolution through a sequence of linear transport subproblems with prescribed coefficients.
Our formulation follows this approach using commutator-free Runge--Kutta exponential integrators (CF-RKEI) \cite{CMO2003}.
The resulting composition of frozen-field flows also provides a natural setting for geometric structure preservation:
if the exact subflows preserve a common volume form or symplectic form, their composition preserves the same form \cite{HLW2006,MQ2002}.
This property holds at the level of exact subflows; its preservation after numerical approximation depends on the integrators used to compute them.
In Lie-group methods based on a fixed group action, successive updates by group elements keep the numerical iterates on the group orbit of the initial state \cite{IMKNZ2000}.
Relevant developments include the frozen-flow methods of Crouch and Grossman \cite{CrouchGrossman1993},
the Runge-Kutta-Munthe-Kaas methods on manifolds \cite{MuntheKaas1999},
and the commutator-free Lie-group methods of Celledoni, Marthinsen, and Owren \cite{CMO2003}.
Related constructions were subsequently adapted to Eulerian and semi-Lagrangian schemes \cite{Celledoni2005, CK2009}.

The construction rests on a fundamental identity for autonomous vector fields.
Let $b$ be a sufficiently regular, complete autonomous vector field on a manifold $M$,
let $\varphi_\sigma$ denote its flow,
and define the transport generator on scalar functions by $C_bg:=-\mathcal L_bg$.
Then $C_b$ generates the one-parameter family of pullback operators
\[
  \exp(hC_b)\,g=g\circ\varphi_{-h},
  \qquad g\in C^1(M),\quad h\in\mathbb R.
\]
For a vector field whose flow is only locally defined, the same identity holds on the corresponding flow domains and time intervals.
This identity, established in Proposition~\ref{prop:autonomous-exponential-pullback},
supplies the elementary update used throughout the construction.
In the self-consistent setting, the evolution is governed by
\begin{equation}
  \frac{d}{dt}u(t)=C\bigl(u(t)\bigr)u(t),
  \qquad C(u)g:=-\mathcal L_{b[u]}g,
  \label{eq:intro-self-consistent}
\end{equation}
where $C(u)$ depends on $u$ but acts linearly on $g$ for fixed $u$.
The autonomous identity does not directly resolve this coupling:
the flow of $b_t^u:=b[u(t)]$ depends on the same solution trajectory it transports,
making the pullback representation implicit.
We resolve this at the time-discrete level by forming frozen transport generators as fixed linear combinations of stage generators computed self-consistently within a CF-RKEI scheme.
The exponential--pullback identity converts each transport exponential into the pullback induced by its exact backward characteristic map.
We call the resulting composition the \emph{self-consistent exact frozen flow}.
Here exact refers to the solution of the frozen characteristic equations within the time-discrete construction.
The corresponding realization after approximating the self-consistent fields and integrating the frozen characteristics numerically is termed the \emph{self-consistent numerical flow}.

Among the geometric structures, the one most directly relevant to the present construction is volume preservation.
If a sufficiently regular vector field is divergence-free with respect to the underlying volume form,
its exact flow preserves that form, as does a composition of such flows.
Mesh-based characteristic methods enforce discrete conservation through conservative remapping, 
which underlies the volume corrected characteristic method \cite{Arbogast2006,Arbogast2010,ArbogastWang2011}, 
and conservative semi-Lagrangian schemes \cite{XU2026,CaiGuoQiu2017,CBQ2021}.
In multidimensional schemes, high-order spatial accuracy requires suitable approximations of the upstream geometry and its associated integrals.
When CF-RKEI is coupled with a conservative semi-Lagrangian discretization,
these operations occur within the frozen transport updates at the intermediate stages \cite{CBQ2021,ZHENG2022,ZHENG2024}.
The present formulation represents each exponential factor directly through a frozen characteristic map and reconstructs the transported quantity from the initial data.
At the level of the self-consistent exact frozen flow, divergence-free frozen fields give volume-preserving subflows and a volume-preserving composition.
This geometric property and the design order specified by the CF-RKEI conditions are distinct aspects of the construction.
Their preservation in the self-consistent numerical flow must be assessed with the field approximation and characteristic integration included.

The electrostatic, collisionless Vlasov--Poisson system is the main example of \eqref{eq:intro-self-consistent} considered in this work. 
Let $M:=\mathbb T^d\times\mathbb R^d$ and $z=(x,v)\in M$, 
and denote by \(f(x,v,t)\ge0\) the phase-space distribution function. 
The VP equation is given by
\begin{equation}
  \partial_t f(x,v,t)
  +v\cdot\nabla_x f(x,v,t)
  -E[f](x,t)\cdot\nabla_v f(x,v,t)=0,
  \label{eq:vp}
\end{equation}
where the self-consistent electric field \(E[f]\) is determined through the Poisson equation
\begin{align}
-\Delta_x\phi(x,t)&=\rho[f](x,t),\qquad
E[f](x,t)=-\nabla_x\phi(x,t),
\label{eq:poisson}
\end{align}
with $\rho[f](x,t):=\bar\rho_0-\int_{\mathbb R^d}f(x,v,t)\,dv$,
denoting the charge density fluctuation relative to the constant neutralizing background \(\bar\rho_0\). 
In the notation of \eqref{eq:intro-self-consistent}, the associated transport field is
$
  b[f](x,v,t)=\bigl(v,-E[f](x,t)\bigr).
$
The second model considered in our numerical experiments is the guiding-center Vlasov model, which describes the transverse dynamics of highly magnetized plasmas \cite{CROUSEILLES2010,Cai2019}. 
It governs the evolution of \(\rho(x,t)\) on a periodic rectangle $\Omega_{\mathrm{gc}}$, with \(x=(x_1,x_2)\in\mathbb R^2\), according to
\begin{align}
  \partial_t\rho+E^\perp[\rho]\cdot\nabla\rho=0,
  \label{eq:GDcenter}
\end{align}
where $E[\rho]=-\nabla_x\phi$ is obtained from the periodic Poisson closure of the guiding-center model,
and $E^\perp=(E_{x_2},-E_{x_1})$.
Thus, the guiding-center model also takes the form of a self-consistent transport equation,
with transport field given by the perpendicular electric drift.

A principal numerical challenge in solving the Vlasov equation is the evolution of a distribution function in a high-dimensional phase space.
In addition to the dimensionality barrier, the formation of increasingly fine filamentary structures can severely degrade the efficiency of fixed-resolution discretizations.
A broad numerical literature has therefore developed around this difficulty. 
Particle-in-cell (PIC) methods \cite{BirdsallLangdon1991,Verboncoeur2005}, 
Eulerian approaches \cite{FILBET2003,BanksHittinger2010,Cheng2013}, 
and semi-Lagrangian methods \cite{CROUSEILLES2010,QiuShu2011,Kormann2019,CaiGuoQiu2017,CBQ2021} 
remain among the standard approaches.
A related strategy for reducing reconstruction costs is the fast
kinetic scheme \cite{DimarcoLoubere2013}.
A complementary direction addresses the phase-space complexity by exploiting low-rank representations of the distribution function. 
Early developments include semi-Lagrangian solvers based on tensor-train formats \cite{Kormann2015TT} 
and adaptive tensor decompositions \cite{EHRLACHER2017}. 
Subsequent methods have largely followed two paradigms: step-and-truncate (SAT) approaches 
and dynamical low-rank (DLR) approximations \cite{Einkemmer2018DLR,EINKEMMER2025,ZHENG2025}.
Structure-preserving variants have also been developed to improve the conservation of macroscopic moments and other invariants \cite{Guo2024CLR,Guo2024LoMaC,Guo2024LoMaCDG,CoughlinHuShumlak2024,Einkemmer2019,Einkemmer2021}. 
The efficiency of these approaches is tied to the compressibility of the evolving distribution function.

This observation motivates an alternative viewpoint: 
one may approximate the characteristic flow transporting the initial data.
Since the solution can be recovered through the pullback of the initial distribution, 
the approximation target is shifted from the increasingly oscillatory function \(f\) to the underlying flow map.
This perspective has led to a class of flow-map methods 
that reconstruct the solution from an approximation of the backward characteristic map.
Representative examples are the Characteristic Mapping Method (CMM) \cite{Krah2023} and Numerical Flow Iteration (NuFI) method \cite{KW2024},
which is most closely related to the present work. 
NuFI represents the numerical solution as
\begin{equation*}
  f_{\tau,h}^n=f_0\circ\Psi^{0}_{n\tau,h},
\end{equation*}
where \(\Psi^{0}_{n\tau,h}\) denotes the backward numerical flow reconstructed from the stored electric field history through compositions of Störmer--Verlet steps. 
Subsequent developments extended NuFI to multi-species systems and Dirichlet boundary conditions \cite{Wilhelm2025PPCF}.
For flow-map methods based on direct initial-data pullback, the reconstructed solution can be evaluated at arbitrary phase-space points and inherits the bounds of the initial datum without limiters.
For Vlasov systems, implementations that reconstruct the map from stored field histories, such as NuFI, reduce storage costs by retaining field data in physical space instead of an evolving phase-space distribution.
Once the field history is available, backtracking from different points is independent and readily parallelizable \cite{KW2024}.

The present work develops a systematic CF-RKEI construction within this initial-data pullback framework.
Its two principal objects are the self-consistent exact frozen flow and the self-consistent numerical flow.
An exponential--pullback identity provides the basis for constructing the exact frozen flow from CF-RKEI.
Using a first-order scheme, we identify the local error caused by field
freezing and derive a one-step VP error budget that separates field and
characteristic errors. These local estimates guide the numerical
realization without claiming global PDE convergence.
For the Vlasov--Poisson system, we prove that the divergence-free frozen fields yield a self-consistent exact frozen flow that is a volume-preserving diffeomorphism of phase space under suitable regularity assumptions.

Section~\ref{sec:flows} develops the geometric setting for prescribed fields (\S\ref{subsec:prescribed-vector-fields})
and the pullback representation of linear transport (\S\ref{sec:pullback-representation});
it then formulates the distinction between prescribed and self-consistent fields and derives
the frozen exponential--pullback identity (\S\ref{subsec:self-consistent-fields}).
Section~\ref{sec:rkei-vlasov} first reviews CF-RKEI methods for ODEs (\S\ref{sec:rkei:construction})
and derives their flow-map representation (\S\ref{sec:NuF:SCfield}),
including a general local freezing-error identity.
It then specializes the construction to the Vlasov--Poisson system,
derives a one-step Lie--Euler error budget at the end of
\S\ref{subsec:vp-exact-frozen-flow}, and defines the self-consistent
numerical flow in \S\ref{subsec:vp-numerical-flow},
before analyzing the diffeomorphism and volume-preservation properties of the former (\S\ref{sec:CP}).
The numerical realization, computational cost, and storage requirements are also discussed.
Section~\ref{sec:Result} assesses the design orders through temporal self-convergence tests
and studies conservation behavior and the evolution of solution structures in long-time simulations for the Vlasov--Poisson system and the guiding-center model.

\section{Exact Flows and Pullback Representations for Transport Equations}
\label{sec:flows}

We recall the characteristic-flow notation and pullback representation used below, following \cite{Lee2013}.
We then distinguish prescribed transport fields from fields that depend on the transported quantity, as in the Vlasov--Poisson system.

Throughout this section, $M$ denotes a smooth finite-dimensional manifold
and $\mathfrak X(M)$ denotes the space of smooth vector fields on $M$. 
We use $\varphi_\sigma$ for the flow of an autonomous vector field, 
$\Phi_s^t$ for the flow of a time-dependent vector field,
and $\Phi_s^t[u]$ for the flow of a self-consistent vector field.

\subsection{Prescribed Vector Fields}
\label{subsec:prescribed-vector-fields}
The primary geometric objects associated with smooth vector fields are their integral curves, 
which are smooth curves whose velocity at each point is equal to the value of the vector field there.
We begin with the autonomous case, in which the vector field does not depend on time.
\begin{definition}[Integral curve of an autonomous vector field]
  \label{def:autonomous-integral-curve}
  Let $b\in\mathfrak X(M)$. A smooth curve $Z:J\to M$, $J\subset\mathbb R$, 
  is an \emph{integral curve} of $b$ if
  \begin{equation}
    \dot Z(\sigma) = b\big(Z(\sigma)\big), \qquad \sigma\in J.
    \label{eq:autonomous-ic}
  \end{equation}
\end{definition}

Another way to encode the family of integral curves associated with a vector field is through its flow.
\begin{definition}[Flow of an autonomous vector field]
\label{def:flow-of-b}
Let \(b\in\mathfrak X(M)\). A \emph{flow of \(b\)} (or a flow \emph{generated by} \(b\)) is a map
  \begin{equation*} 
    \varphi : \mathcal D \longrightarrow M, 
  \end{equation*}
  where \(\mathcal D\subset \mathbb R\times M\) is an open set, 
  called the \emph{flow domain}, such that for every \(p\in M\) the slice
  \begin{equation*}
    \mathcal D_p := \{\sigma\in\mathbb R : (\sigma,p)\in\mathcal D\} 
  \end{equation*}
  is an interval containing \(0\), and \(\varphi\) satisfies:
  \begin{enumerate}
    \item \(\varphi(0,p) = p\) for all \(p\in M\);
    \item for each \(p\in M\), the curve \(\sigma\mapsto \varphi(\sigma,p)\), \(\sigma\in\mathcal D_p\), 
    is an integral curve of \(b\) starting at \(p\).
  \end{enumerate}
  For fixed \(\sigma\in\mathbb R\),
  we write \(\varphi_\sigma : \{p\in M:(\sigma,p)\in\mathcal D\}\to M\) for the map \(\varphi_\sigma(p):=\varphi(\sigma,p)\);
  and for fixed \(p\in M\), the curve $\sigma \mapsto \varphi_\sigma(p)$, $\sigma \in \mathcal D_p$, 
  is precisely the integral curve \(Z\) of \(b\) with \(Z(0)=p\).
\end{definition}

By the Fundamental Theorem on flows \cite[Thm.\ 9.12]{Lee2013},
for each $p\in M$ the integral curve can be taken to be \emph{maximal}, i.e., 
defined on the largest possible interval \(\mathcal D_p\) on which it cannot be extended;
this maximality is what makes \(\varphi_\sigma(p)\),
and hence \(\mathcal D\) and \(\varphi\), uniquely determined.
The theorem further ensures that \(\mathcal D\) is open 
and \(\varphi\) is smooth.

The preceding vector field is autonomous because its value depends only on
the state \(z\). We now allow the vector field to depend on time.
\begin{definition}[Integral curve of a time-dependent vector field]
\label{def:td-ic}
A smooth time-dependent vector field on $M$ is a smooth map
  \begin{equation*}
    b:I\times M\to TM, \qquad (t,z)\mapsto b_t(z):=b(t,z),
  \end{equation*}
satisfying
$b_t(z)\in T_zM$. Given $s\in I$ and $z\in M$, a smooth curve
$Z(\cdot\,;s,z):J\to M$ with $s\in J\subset I$ is an \emph{integral curve
of $b$ through $z$ at time $s$} if
\begin{equation}
  \frac{d}{dt}Z(t;s,z) = b_t\big(Z(t;s,z)\big), \qquad Z(s;s,z)=z.
  \label{eq:td-ic}
\end{equation}
\end{definition}

A time-dependent vector field might not generate a one-parameter flow,
because two integral curves starting at the same point 
but at different times might follow different paths,
whereas all integral curves of a flow through a given point have the same image.
By the standard existence-uniqueness-maximality theory for time-dependent vector fields,
obtained by applying the autonomous flow theorem to the suspended field $\tilde{b}(t,z) := (1,b_t(z))$ on $I \times M$,
for each $(s,z) \in I \times M$ there is a unique maximal integral curve $Z(\cdot;s,z)$;
we denote its domain by $J_{s,z} \subset I$.

Allowing both the initial point $z$ and the initial time $s$ 
to vary assembles the maximal integral curves into a two-parameter family of flow maps. 
Define the natural flow domain 
\begin{equation}
  \mathcal D_b
    :=
  \left\{
    (s,t,z)\in I\times I\times M:
    t\in J_{s,z}
  \right\}
  \label{eq:two-parameter-flow-domain}
\end{equation}
and, for fixed \(s,t\in I\), define
\begin{equation}
  \mathcal D_{s,t}
    :=
  \left\{
    z\in M:
    (s,t,z)\in\mathcal D_b
  \right\}
  =
  \left\{
    z\in M:
    t\in J_{s,z}
  \right\}.
  \label{eq:two-parameter-flow-spatial-domain}
\end{equation}

\begin{definition}[Two-parameter flow]
  \label{def:two-parameter-flow}
  The \emph{two-parameter flow} generated by \(b\) is the smooth map
  \begin{equation}
    \Phi\colon\mathcal D_b\longrightarrow M,
    \qquad
    \Phi(s,t,z):=Z(t;s,z).
    \label{eq:two-parameter-flow}
  \end{equation}
  For fixed \(s,t\in I\), the corresponding flow map is
  \begin{equation}
    \Phi_s^t\colon\mathcal D_{s,t}\longrightarrow M,
    \qquad
    \Phi_s^t(z):=\Phi(s,t,z)=Z(t;s,z).
    \label{eq:flow-ode}
  \end{equation}
  Thus \(\Phi_s^t\) maps a state at time \(s\) to its state at time \(t\).
\end{definition}

For every \(z\in\mathcal D_{s,t}\), the curve
\[
  t\longmapsto\Phi_s^t(z)
\]
is the maximal integral curve through \(z\) at time \(s\). In particular,
\begin{equation}
  \frac{\partial}{\partial t}\Phi_s^t(z)
    =
  b_t\bigl(\Phi_s^t(z)\bigr),
  \qquad
  \Phi_s^s(z)=z.
  \label{eq:flow-evolution-equation}
\end{equation}

\begin{proposition}[Fundamental properties of the two-parameter flow]
  \label{prop:flow-properties}
  The two-parameter flow generated by \(b\) has the following properties.
  \begin{enumerate}
    \item The set \(\mathcal D_b\) is open in \(I\times I\times M\), and
          \(\Phi\colon\mathcal D_b\to M\) is smooth.

    \item For every \(s,t\in I\), the set \(\mathcal D_{s,t}\) is open in
          \(M\).

    \item For every \(s\in I\),
          \begin{equation}
            \Phi_s^s=\operatorname{Id}_M.
            \label{eq:two-parameter-flow-identity}
          \end{equation}

    \item Whenever the compositions are defined,
          \begin{equation}
            \Phi_r^t\circ\Phi_s^r
              =
            \Phi_s^t.
            \label{eq:two-parameter-flow-composition}
          \end{equation}

    \item For every \(s,t\in I\), the map
          \[
            \Phi_s^t\colon
            \mathcal D_{s,t}\longrightarrow\mathcal D_{t,s}
          \]
          is a diffeomorphism, with
          \begin{equation}
            \bigl(\Phi_s^t\bigr)^{-1}
              =
            \Phi_t^s.
            \label{eq:two-parameter-flow-inverse}
          \end{equation}
  \end{enumerate}
\end{proposition}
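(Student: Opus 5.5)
The natural approach is to reduce everything to the autonomous Fundamental Theorem on flows \cite[Thm.\ 9.12]{Lee2013} applied to the suspended field $\tilde b(t,z)=(1,b_t(z))$ on the manifold $\tilde M:=I\times M$. Let $\tilde\varphi\colon\tilde{\mathcal D}\to\tilde M$ be its maximal flow. Since the first component of $\tilde b$ is $1$, every integral curve has the form
\[
  \tilde\varphi_\sigma(s,z)=\bigl(s+\sigma,\,Z(s+\sigma;s,z)\bigr).
\]
By uniqueness of maximal integral curves, this identifies $J_{s,z}=\{s+\sigma:\sigma\in\tilde{\mathcal D}_{(s,z)}\}$. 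Hence
\[
  \Phi(s,t,z)=\pi_M\,\tilde\varphi(t-s,(s,z)),
\]
where $\pi_M$ is the projection to $M$, and $\mathcal D_b$ is the image of $\tilde{\mathcal D}$ under the diffeomorphism $(\sigma,s,z)\mapsto(s,s+\sigma,z)$ of $\mathbb R\times I\times M$ onto its image. The only point to watch is that $I$ is an open interval, so that $\tilde M$ is a manifold. With this in hand, item 1 follows from openness of $\tilde{\mathcal D}$ and smoothness of $\tilde\varphi$.

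Item 2 is immediate: $\mathcal D_{s,t}$ is the preimage of the open set $\mathcal D_b$ under the continuous inclusion $z\mapsto(s,t,z)$. Item 3 follows from $Z(s;s,z)=z$ in \eqref{eq:td-ic}.

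For item 4, fix $z\in\mathcal D_{s,r}$ with $\Phi_s^r(z)\in\mathcal D_{r,t}$. The curve $\tau\mapsto Z(\tau;s,z)$ is an integral curve through $w:=\Phi_s^r(z)$ at time $r$. By uniqueness and maximality it coincides with $Z(\cdot;r,w)$, and the two have the same maximal domain, so $J_{r,w}=J_{s,z}$. Therefore $t\in J_{s,z}$, and evaluating at $t$ gives \eqref{eq:two-parameter-flow-composition}. The same argument shows that whenever $\Phi_s^t\circ$ is defined on one side, the corresponding domains agree.

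For item 5, take $z\in\mathcal D_{s,t}$ and set $w=\Phi_s^t(z)$. The maximal-domain equality $J_{t,w}=J_{s,z}\ni s$ shows that $w\in\mathcal D_{t,s}$, so $\Phi_s^t$ maps $\mathcal D_{s,t}$ into $\mathcal D_{t,s}$. Item 4 with $r=t$ then gives $\Phi_t^s\circ\Phi_s^t=\Phi_s^s=\operatorname{Id}$ on $\mathcal D_{s,t}$, and the symmetric argument gives $\Phi_s^t\circ\Phi_t^s=\operatorname{Id}$ on $\mathcal D_{t,s}$. Both maps are smooth by item 1, restricted to the open slices, so $\Phi_s^t$ is a diffeomorphism with inverse $\Phi_t^s$.

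I expect the main obstacle to be the bookkeeping in step 1: verifying carefully that maximal integral curves of $\tilde b$ correspond exactly to maximal time-dependent integral curves, so that the domains match. Everything else is routine uniqueness/maximality reasoning.
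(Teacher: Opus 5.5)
Your proposal is correct and follows essentially the same route as the paper. Both arguments get openness and smoothness from the suspended autonomous field $(1,b_t)$, the identity from $Z(s;s,z)=z$, and the composition law from uniqueness of integral curves through $(r,\Phi_s^r(z))$; both then obtain $(\Phi_s^t)^{-1}=\Phi_t^s$ by specializing that law. Your version is more careful about domains: the maximal-interval equality $J_{r,w}=J_{s,z}$ gives $\Phi_s^t(\mathcal D_{s,t})\subset\mathcal D_{t,s}$, and you note that $I$ must be open (or $b$ extended) for $I\times M$ to be a manifold, both of which the paper leaves implicit.
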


\begin{proof}
  Openness of the natural domains and smoothness of \(\Phi\) follow from
  smooth dependence of the suspended autonomous flow on its initial data.

  The identity property follows directly from
  \(Z(s;s,z)=z\). For the composition property, let
  \(q=\Phi_s^r(z)\). The curves
  \[
    t\longmapsto\Phi_s^t(z)
    \qquad\text{and}\qquad
    t\longmapsto\Phi_r^t(q)
  \]
  pass through the same point \(q\) at the same time \(r\).
  Uniqueness therefore gives
  \[
    \Phi_r^t\bigl(\Phi_s^r(z)\bigr)=\Phi_s^t(z).
  \]
  Taking \(t=s\) shows that
  \(\Phi_t^s\circ\Phi_s^t=\operatorname{Id}\), and interchanging \(s\)
  and \(t\) gives the reverse identity. Hence \(\Phi_s^t\) is a
  diffeomorphism with inverse \(\Phi_t^s\).
\end{proof}

If $b_t\equiv b$ is autonomous with flow $\varphi_\sigma$, then
$\Phi_s^t=\varphi_{t-s}$: an autonomous flow depends only on the elapsed
time $t-s$, whereas a genuinely time-dependent flow generally depends on
$s$ and $t$ separately.

\subsection{Pullback Representation of Linear Transport}
\label{sec:pullback-representation}

We next connect the geometric flow with the evolution of a transported
scalar function. For $g\in C^1(M)$, the Lie derivative of $g$ along $b_t$
is $(\mathcal L_{b_t}g)(z):=dg_z(b_t(z))$, i.e.\ the directional derivative
$b_t(z)\cdot\nabla g(z)$ in coordinates. Consider the linear transport
equation
\begin{equation}
  \partial_t u(t,\cdot) + \mathcal L_{b_t}u(t,\cdot) = 0, \qquad
  u(s,\cdot)=u_s,
  \label{eq:transport}
\end{equation}
which is linear because $b_t$ is prescribed independently of $u$.

\begin{proposition}[Pullback representation]
\label{prop:pullback}
Let $\Phi_s^t$ be the flow generated by $b$. The classical solution of
\eqref{eq:transport} is
\begin{equation}
  u(t,z) = u_s\big(\Phi_t^s(z)\big), \qquad z\in \mathcal D_{t,s}.
  \label{eq:pullback}
\end{equation}
\end{proposition}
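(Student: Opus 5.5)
Two things have to be shown: that the pullback formula \eqref{eq:pullback} defines a classical solution of \eqref{eq:transport}, and that it is the only one. The approach is the method of characteristics, expressed with the two-parameter flow of Definition~\ref{def:two-parameter-flow} and the group properties in Proposition~\ref{prop:flow-properties}.

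\emph{Initial condition and domain.} By Proposition~\ref{prop:flow-properties}, the map $\Phi_t^s\colon\mathcal D_{t,s}\to\mathcal D_{s,t}$ is a diffeomorphism, and $\Phi$ is smooth on the open set $\mathcal D_b$. Hence $u(t,z)=u_s(\Phi_t^s(z))$ is $C^1$ on the open set $\{(t,z):(t,s,z)\in\mathcal D_b\}$. Since $\Phi_s^s=\operatorname{Id}$, we get $u(s,\cdot)=u_s$.

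\emph{Constancy along characteristics.} Fix $z_0\in\mathcal D_{s,t}$ and set $Z(t)=\Phi_s^t(z_0)$. The composition property \eqref{eq:two-parameter-flow-composition} gives $\Phi_t^s(Z(t))=\Phi_t^s\circ\Phi_s^t(z_0)=z_0$, so
\[
u\bigl(t,Z(t)\bigr)=u_s(z_0)\quad\text{for all admissible } t .
\]
Differentiating in $t$ with the chain rule and using \eqref{eq:flow-evolution-equation}, namely $\dot Z(t)=b_t(Z(t))$, gives
\[
0=\partial_t u\bigl(t,Z(t)\bigr)+du_{t}\bigl(b_t(Z(t))\bigr)
 =\bigl(\partial_t u+\mathcal L_{b_t}u\bigr)\bigl(t,Z(t)\bigr).
\]
By Proposition~\ref{prop:flow-properties}, every pair $(t,z)$ with $z\in\mathcal D_{t,s}$ arises as $z=Z(t)$ with $z_0=\Phi_t^s(z)$. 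So the equation holds on the whole domain. This step is cleaner than differentiating $\Phi_t^s$ in its lower index directly, which would first require deriving the backward equation $\partial_t\Phi_t^s(z)=-d\Phi_t^s\,b_t(z)$.

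\emph{Uniqueness.} The same computation run in reverse applies to any classical solution $\tilde u$ on the same domain: along each $Z(t)$, the derivative $\frac{d}{dt}\tilde u(t,Z(t))$ vanishes. Since the relevant time set is an interval containing $s$, we conclude $\tilde u(t,Z(t))=u_s(z_0)$, and therefore $\tilde u=u$.

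The main obstacle is bookkeeping rather than analysis. One must check that the set of admissible $t$ for a fixed $z_0$ is an interval containing $s$, which holds because the maximal domain $J_{s,z_0}$ is an interval. One must also check that the map $z\mapsto\Phi_t^s(z)$ is $C^1$ jointly in $(t,z)$, which follows from smoothness of $\Phi$ on the open set $\mathcal D_b$. Both facts come directly from Proposition~\ref{prop:flow-properties}.
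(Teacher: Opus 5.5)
Your proposal is correct, and its uniqueness step is exactly the paper's argument: differentiate $u\bigl(t,\Phi_s^t(z_0)\bigr)$ along the characteristic using the chain rule and the flow equation, conclude that it stays equal to $u_s(z_0)$, and then substitute $z_0=\Phi_t^s(z)$. The paper's proof stops there, showing only that any classical solution must take this form, so your extra check that the formula really defines a $C^1$ solution with $u(s,\cdot)=u_s$ supplies the existence half that the paper leaves implicit.
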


\begin{proof}
Fix $z_0\in \mathcal D_{s,t}$ and set $Z(t):=\Phi_s^t(z_0)$. The chain rule
and \eqref{eq:flow-ode} give
\[
  \tfrac{d}{dt}u\big(t,\Phi_s^t(z_0)\big)
    = \big(\partial_t u+\mathcal L_{b_t}u\big)\big(t,\Phi_s^t(z_0)\big)=0.
\]
Hence $u\big(t,\Phi_s^t(z_0)\big)=u_s(z_0)$. Substituting
$z_0=\Phi_t^s(z)$ gives \eqref{eq:pullback}.
\end{proof}

We record separately the specialization of Proposition~\ref{prop:pullback}
to an autonomous field, since this is the single fact to which all
frozen-field constructions of the paper reduce. When $b$ is autonomous,
let $\varphi_\sigma$ denote its flow, so that $\Phi_s^t=\varphi_{t-s}$, 
and define the transport generator
$C_b := -\mathcal L_b$.

\begin{proposition}[Autonomous exponential-pullback identity]
\label{prop:autonomous-exponential-pullback}
Let $b\in\mathfrak X(M)$ be complete and autonomous with flow $\varphi_\sigma$, and let
$C_b:=-\mathcal L_b$. For every $h\in\mathbb R$ and every $g\in C^1(M)$,
\begin{equation}
  \exp(hC_b)\,g = g\circ\varphi_{-h}.
  \label{eq:autonomous-exponential-pullback}
\end{equation}
That is, the exponential of an autonomous transport operator acts by
pullback along the corresponding backward characteristic flow.
\end{proposition}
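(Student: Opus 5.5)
The identity should be read with $\exp(hC_b)$ \emph{defined} as the one-parameter family of operators generated by $C_b$, i.e.\ as the solution operator of the linear transport problem $\partial_\tau u = C_b u$, $u(0)=g$. This reading is forced: $C_b$ is unbounded, so the exponential series is unavailable for general $g\in C^1(M)$. With that in place, I would argue in three steps.

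\textbf{Step 1: pullback along the flow is a one-parameter group.} Set $T_h g := g\circ\varphi_{-h}$. Completeness of $b$ means $\mathcal D=\mathbb R\times M$, so each $\varphi_{-h}$ is a global diffeomorphism. The flow group law $\varphi_{-h_1}\circ\varphi_{-h_2}=\varphi_{-(h_1+h_2)}$ is the autonomous case of \eqref{eq:two-parameter-flow-composition}, using $\Phi_s^t=\varphi_{t-s}$. It gives
\[
T_{h_1}T_{h_2}=T_{h_1+h_2}, \qquad T_0=\operatorname{Id}.
\]

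\textbf{Step 2: $T_h g$ solves the transport problem.} I would apply Proposition~\ref{prop:pullback} with $s=0$ and $u_0=g$. Since $\Phi_h^0=\varphi_{-h}$, it shows that $u(h,\cdot)=g\circ\varphi_{-h}$ solves
\[
\partial_h u + \mathcal L_b u = 0, \qquad \text{that is,} \qquad \partial_h u = C_b u .
\]
Directly, one computes
\[
\frac{d}{dh}\Big|_{h=0} g\bigl(\varphi_{-h}(z)\bigr) = -dg_z\bigl(b(z)\bigr) = (C_b g)(z).
\]
Combining this with the group law gives $\frac{d}{dh}T_h g = C_b T_h g = T_h C_b g$ for all $h$, with pointwise convergence. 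So $C_b$ is the generator of $\{T_h\}$.

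\textbf{Step 3: uniqueness.} This is the step I expect to be the main obstacle: showing that any other $C^1$ solution $w$ of $\partial_h w=C_b w$, $w(0)=g$, coincides with $T_h g$. I would follow the method of characteristics, as in the proof of Proposition~\ref{prop:pullback}. Fix $z_0$ and differentiate along $\sigma\mapsto w(\sigma,\varphi_\sigma(z_0))$. The chain rule gives
\[
\frac{d}{d\sigma}\, w\bigl(\sigma,\varphi_\sigma(z_0)\bigr) = \bigl(\partial_\sigma w+\mathcal L_b w\bigr)\bigl(\sigma,\varphi_\sigma(z_0)\bigr) = 0 .
\]
Hence $w(h,\varphi_h(z_0))=g(z_0)$. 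Substituting $z_0=\varphi_{-h}(z)$ yields $w(h,z)=g(\varphi_{-h}(z))$, and this works for all $h\in\mathbb R$ because the flow is complete.

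Uniqueness therefore identifies the abstract exponential with $T_h$, which gives \eqref{eq:autonomous-exponential-pullback}. If instead one wants the exponential series interpretation, it is consistent on the subspace where $\sum_k h^k C_b^k g/k!$ converges (for example, real-analytic $g$ and $b$ with small $h$). There the partial sums agree with the Taylor expansion of $h\mapsto g(\varphi_{-h}(z))$, since $\frac{d^k}{dh^k}T_hg=T_hC_b^kg$ by Step 2. I would note this as a remark only.
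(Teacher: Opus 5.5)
Your proposal is correct and follows the same route as the paper. Proposition~\ref{prop:pullback} with $s=0$, $u_0=g$ shows that $g\circ\varphi_{-h}$ solves $\partial_h u=C_b u$, $u(0)=g$, and uniqueness then identifies it with $\exp(hC_b)g$, understood as the solution operator of that problem. The paper simply builds uniqueness into its definition of the exponential, whereas your Step~3 proves it for $C^1$ solutions by the characteristic argument; your Step~1 (the group law) is not needed for the identity itself.
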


\begin{proof}
Fix $g\in C^1(M)$ and apply Proposition~\ref{prop:pullback} to the
autonomous case $b_t\equiv b$ with initial time $s=0$ and initial data
$u_0=g$: the classical solution of \eqref{eq:transport} is
\[
  u(t,z) = g\big(\Phi_t^0(z)\big) = g\big(\varphi_{-t}(z)\big),
  \qquad t\in\mathbb R,
\]
using $\Phi_t^0=\varphi_{0-t}=\varphi_{-t}$. Set $w(t):=u(t,\cdot) =
g\circ\varphi_{-t}$. By \eqref{eq:transport}, $w$ solves
\[
  \frac{d}{dt}w(t) = -\mathcal L_b\,w(t) = C_b\,w(t), \qquad w(0)=g,
\]
which is precisely the defining initial value problem for the operator
exponential: $\exp(tC_b)g$ is, by definition, the unique solution $v(t)$
of $\dot v(t)=C_b v(t)$, $v(0)=g$. By uniqueness, $w(t)=\exp(tC_b)g$, i.e.
$g\circ\varphi_{-t}=\exp(tC_b)g$ for every $t\in\mathbb R$. Relabelling
$t=h$ gives \eqref{eq:autonomous-exponential-pullback}.
\end{proof}

Here $\exp(tC_b)$ denotes the transport solution operator determined by characteristics, 
rather than a power series of bounded operators on an unspecified Banach space.
If $b$ is not complete, the identity is restricted to the points
and time intervals on which the corresponding characteristics exist.

Proposition~\ref{prop:autonomous-exponential-pullback} is a classical identity used in splitting and composition
methods \cite{HLW2006,MQ2002}. 
We include the proof to fix the sign and composition conventions for Section~\ref{sec:rkei-vlasov}.

\subsection{Prescribed versus Self-Consistent Vector Fields}
\label{subsec:self-consistent-fields}
Sections~\ref{subsec:prescribed-vector-fields}--\ref{sec:pullback-representation} recall, 
in a form adapted to our later notation, 
the classical theory of flows generated by autonomous and prescribed time-dependent vector fields (see e.g.\ \cite{Lee2013}); 
no originality is claimed there. Section~\ref{subsec:self-consistent-fields} identifies why this classical theory does not directly apply once the field is generated by the transported function itself, 
and introduces the frozen-field device around which the paper's numerical method, developed in Section~\ref{sec:rkei-vlasov}, is built.

The preceding construction assumes that $b_t$ is known independently of
the transported function. We now distinguish this setting from nonlinear
transport equations in which the vector field is determined by the
solution itself.

Let $\mathcal Y$ be a suitable class of scalar functions on $M$ and let
\begin{equation}
  \mathcal B\colon\mathcal Y\longrightarrow\mathfrak X(M),
  \qquad
  u\longmapsto b[u]:=\mathcal B(u),
  \label{eq:field-functional}
\end{equation}
be a vector-field-valued functional. 
The corresponding self-consistent transport equation is
\begin{equation}
  \partial_t u(t,\cdot) + \mathcal L_{b[u(t)]}u(t,\cdot) = 0.
  \label{eq:self-consistent-transport}
\end{equation}
For fixed $u\in\mathcal Y$, define the linear operator
\begin{equation}
  C(u)g
    :=
  -\mathcal L_{b[u]}g.
  \label{eq:self-consistent-operator}
\end{equation}
Equation~\eqref{eq:self-consistent-transport} can then be written in
the abstract form
\begin{equation}
  \frac{\mathrm d}{\mathrm dt}u(t)
    =
  C\bigl(u(t)\bigr)u(t).
  \label{eq:nonlinear-operator-evolution}
\end{equation}
Although $C(u)$ is linear in the function on which it acts once $u$
has been fixed, the map $u\mapsto C(u)$ is generally nonlinear.

Along an exact solution $u(t)$, define
\begin{equation}
  b_t^u
    :=
  b[u(t)].
  \label{eq:self-consistent-induced-field}
\end{equation}
This is a self-consistent vector field on $M$, and we denote its exact
flow by
\begin{equation}
  \frac{\partial}{\partial t}\Phi_s^t[u](z)
    =
  b[u(t)]\bigl(\Phi_s^t[u](z)\bigr),
  \qquad
  \Phi_s^s[u](z)=z.
  \label{eq:self-consistent-flow}
\end{equation}
Applying Proposition~\ref{prop:pullback} along the exact
solution gives
\begin{equation}
  u(t,z)
    =
  u\bigl(s,\Phi_t^s[u](z)\bigr).
  \label{eq:self-consistent-pullback}
\end{equation}
When no ambiguity can arise, we suppress the explicit dependence on $u$
and write simply $\Phi_s^t$.

\begin{remark}[Implicit nature of the self-consistent representation]
\label{rem:non-closure}
The representation \eqref{eq:self-consistent-pullback} is exact but implicit: 
evaluating $\Phi_t^s[u]$ requires the solution-dependent field $b[u(\tau)]$ between $s$ and $t$. 
Freezing this field at stage approximations yields the autonomous subproblems used in Section~\ref{sec:rkei-vlasov}.
\end{remark}

To obtain computable subproblems, fix a state $u_\star\in\mathcal Y$ and
define the frozen autonomous vector field and its generator by
\begin{equation}
  b_\star
    :=
  b[u_\star],
  \qquad
  C_\star
    :=
  C(u_\star)
    =
  -\mathcal L_{b_\star}.
  \label{eq:frozen-state-vector-field}
\end{equation}
Let $\varphi_\sigma^{b_\star}$ be the autonomous flow generated by
$b_\star$:
\begin{equation}
  \frac{\mathrm d}{\mathrm d\sigma}
    \varphi_\sigma^{b_\star}(z)
    =
  b_\star
    \bigl(\varphi_\sigma^{b_\star}(z)\bigr),
  \qquad
  \varphi_0^{b_\star}(z)=z.
  \label{eq:frozen-state-forward-flow}
\end{equation}
For a signed flow increment \(\delta\in\mathbb{R}\), define the associated
backward map by
\begin{equation}
  \zeta_\delta^{b_\star}
    :=
  \varphi_{-\delta}^{b_\star}.
  \label{eq:frozen-state-backward-flow}
\end{equation}
For \(\delta>0\), this is equivalently the terminal-value map
\begin{equation}
  \begin{aligned}
    \frac{\mathrm d}{\mathrm d\sigma}Z(\sigma)
      &=
    b_\star\bigl(Z(\sigma)\bigr),
    \qquad \sigma\in[0,\delta],\\
    Z(\delta)&=z,
    \qquad
    \zeta_\delta^{b_\star}(z)=Z(0).
  \end{aligned}
  \label{eq:frozen-state-terminal-value-flow}
\end{equation}

Once $u_\star$ is fixed, $b_\star$ is autonomous and its characteristics can be computed
independently of the unknown solution trajectory. The following corollary
identifies its transport exponential with the pullback along the backward flow.

\begin{corollary}[Frozen exponential-pullback identity]
\label{prop:frozen-pullback}
For every \(u_\star\in\mathcal Y\) and every \(\delta\in\mathbb R\) for
which \(\zeta_\delta^{b_\star}\) is defined,
\begin{equation}
  \exp\bigl(\delta\, C(u_\star)\bigr)g
    =
  g\circ\zeta_\delta^{b_\star}
  \label{eq:frozen-exponential-pullback}
\end{equation}
for every \(g\in C^1(M)\).
\end{corollary}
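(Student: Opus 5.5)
The plan is to reduce the statement directly to Proposition~\ref{prop:autonomous-exponential-pullback} applied to the frozen field $b_\star=b[u_\star]$. Once $u_\star\in\mathcal Y$ is fixed, $b_\star=\mathcal B(u_\star)\in\mathfrak X(M)$ is a smooth autonomous vector field. By \eqref{eq:frozen-state-vector-field} its transport generator $C_{b_\star}=-\mathcal L_{b_\star}$ coincides with $C(u_\star)$. So the left-hand side of \eqref{eq:frozen-exponential-pullback} is exactly $\exp(\delta C_{b_\star})g$, and no self-consistent coupling remains.

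In the complete case, Proposition~\ref{prop:autonomous-exponential-pullback} with $h=\delta$ gives $\exp(\delta C_{b_\star})g=g\circ\varphi^{b_\star}_{-\delta}$. The definition \eqref{eq:frozen-state-backward-flow} then rewrites this as $g\circ\zeta^{b_\star}_\delta$, which is the claim. For $\delta>0$ it is also worth checking that this matches the terminal-value description \eqref{eq:frozen-state-terminal-value-flow}. The curve $Z(\sigma):=\varphi^{b_\star}_{\sigma-\delta}(z)$ solves the frozen ODE with $Z(\delta)=z$, and $Z(0)=\varphi^{b_\star}_{-\delta}(z)$. Uniqueness of integral curves therefore identifies the two definitions of $\zeta^{b_\star}_\delta$.

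The only real obstacle is the qualifier ``for which $\zeta_\delta^{b_\star}$ is defined,'' since $b_\star$ need not be complete. Here I would rerun the proof of Proposition~\ref{prop:autonomous-exponential-pullback} on the flow domain rather than invoke it verbatim. Fix $z$ with $-\delta\in\mathcal D_z$. Because $\mathcal D_z$ is an interval containing $0$, $\varphi^{b_\star}_{-t}(z)$ is defined for all $t$ between $0$ and $\delta$. Set $w(t,z):=g(\varphi^{b_\star}_{-t}(z))$ on the open set of such $(t,z)$. The argument of Proposition~\ref{prop:pullback} (chain rule along characteristics, with $\Phi_t^0=\varphi_{-t}$) shows that $w$ solves $\partial_t w=C(u_\star)w$ with $w(0)=g$.

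By the paper's stated convention, $\exp(tC(u_\star))$ is the transport solution operator determined by characteristics, restricted to points and time intervals where the characteristics exist. Uniqueness of classical solutions on that domain then gives $w(\delta,\cdot)=\exp(\delta C(u_\star))g$ there, which completes the argument. The point needing care is that the uniqueness step is stated pointwise along each characteristic segment, so that it does not require global existence.
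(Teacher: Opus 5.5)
Your proposal is correct and follows the paper's proof: you identify $C(u_\star)$ with the generator $-\mathcal L_{b_\star}$ of the autonomous frozen field, apply Proposition~\ref{prop:autonomous-exponential-pullback} with $h=\delta$, and rewrite $\varphi^{b_\star}_{-\delta}$ as $\zeta^{b_\star}_\delta$ via \eqref{eq:frozen-state-backward-flow}. Your check against the terminal-value form and your rerun of the argument on the flow domain for incomplete $b_\star$ go slightly beyond the paper, which covers the non-complete case only through its remark following Proposition~\ref{prop:autonomous-exponential-pullback}.
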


\begin{proof}
By construction \(C(u_\star)=C_\star=-\mathcal L_{b_\star}\) is the
transport generator of the \emph{autonomous} vector field \(b_\star\).
The autonomous case of the pullback representation
(equivalently~\eqref{eq:autonomous-exponential-pullback}, applied to
\(b_\star\) with flow \(\varphi_\sigma^{b_\star}\)) gives, for any time
increment \(\delta\),
\[
  \exp(\delta C_\star)g = g\circ\varphi_{-\delta}^{b_\star}.
\]
The claim follows from the definition
\(\zeta_\delta^{b_\star}:=\varphi_{-\delta}^{b_\star}\) in
\eqref{eq:frozen-state-backward-flow}.
\end{proof}

Corollary~\ref{prop:frozen-pullback} converts each frozen transport exponential into evaluation along a backward characteristic map. 
For CF-RKEI factors containing linear combinations of stage generators, 
the associated field is the same linear combination of the stage fields, 
by linearity of the Lie derivative in the vector field.

The role of the CF-RKEI is not to solve an already prescribed time-dependent characteristic equation.
Instead, it approximates the nonlinear, self-consistent evolution
\eqref{eq:nonlinear-operator-evolution} by composing exponentials of
operators frozen at suitable stage approximations; 
by Corollary~\ref{prop:frozen-pullback}, each such frozen exponential acts by pullback along an exact backward characteristic map.
Section~\ref{sec:rkei-vlasov} develops this composition at the operator level 
and specializes it to the self-consistent field of the Vlasov dynamics.

\section{A Self-consistent Exact Frozen Flow and its numerical realization}
\label{sec:rkei-vlasov}
CF-RKEI provide a systematic way to construct time-stepping
schemes from exponentials of frozen operators. For nonlinear transport,
each frozen transport exponential can be defined directly through its
characteristic flow. This gives a flow-based construction without first
discretizing the transport operator in space.

The construction and its convergence analysis are separate questions.
The ODE order conditions guide the choice of coefficients; they do not
by themselves establish convergence for a transport PDE or its fully
discrete realization. Our analysis has a more limited purpose: we
identify the error introduced by freezing a field and, for the
Vlasov--Poisson system, separate the effects of field approximation and
characteristic integration in a single step. These estimates explain
accuracy requirements without asserting global PDE convergence.

Section~\ref{sec:rkei:construction} recalls the ODE construction, and
Section~\ref{sec:NuF:SCfield} gives its characteristic-flow formulation
and a general local freezing-error estimate.
Section~\ref{sec:SC-NuF:VP} specializes the construction to
Vlasov--Poisson, derives a one-step error budget, and describes
the numerical realization.
Section~\ref{sec:CP} establishes diffeomorphism and volume-preservation
properties of the exact frozen flows and their compositions.

Unless otherwise stated, we consider a finite time interval
$I=[0,T]$ and denote the time step by $\tau$ in this section.
\subsection{A commutator-free Runge-Kutta exponential integrators approximation}
\label{sec:rkei:construction}
Consider the finite-dimensional nonlinear ODE
\begin{equation}
  \frac{dY(t)}{dt}=C(Y(t))Y(t),
  \qquad Y(0)=Y_0,
  \label{eq:ode}
\end{equation}
where $C$ is a matrix-valued function of the state.
The Lie--Euler method \cite{CrouchGrossman1993} freezes $C$ at the
current numerical state and advances the solution by
\begin{equation}
  Y^{n+1}=\exp\bigl(\tau C(Y^n)\bigr)Y^n.
  \label{eq:lie-euler-ode-update}
\end{equation}
Here the matrix is frozen during a step, while its dependence on the
solution is retained through the value at the beginning of that step.

Higher-order approximations can be constructed using
commutator-free Runge--Kutta exponential integrators
(CF-RKEI), as described below.

\begin{definition}[CF-RKEI tableau]\label{def:tableau}
A CF-RKEI method with nodes
\[
0=c_1<c_2<\cdots<c_s\le 1
\]
is represented by the tableau
\begin{equation}
  \begin{array}{c|cccc}
    0 & & & & \\
    c_2 & \boldsymbol\alpha_{2,1} & \boldsymbol\alpha_{2,2} & \cdots & \boldsymbol\alpha_{2,J_2} \\
    c_3 & \boldsymbol\alpha_{3,1} & \boldsymbol\alpha_{3,2} & \cdots & \boldsymbol\alpha_{3,J_3} \\
    \vdots & \vdots & \vdots & & \vdots \\
    c_s & \boldsymbol\alpha_{s,1} & \boldsymbol\alpha_{s,2} & \cdots & \boldsymbol\alpha_{s,J_s} \\
    \hline
    & \boldsymbol\beta_{1} & \boldsymbol\beta_{2} & \cdots & \boldsymbol\beta_{J}
  \end{array}
  \label{eq:general-tableau}
\end{equation}
where
\[
\boldsymbol{\alpha}_{r,j}
:=
\bigl(\alpha^1_{r,j},\ldots,\alpha^{r-1}_{r,j}\bigr)
\in\mathbb{R}^{r-1},
\qquad
\boldsymbol{\beta}_{j}
:=
\bigl(\beta^1_j,\ldots,\beta^s_j\bigr)
\in\mathbb{R}^{s}.
\]
Here \(J_r\ge1\) is the number of exponential factors used in stage \(r\),
and \(J\ge1\) is the number used in the final update. The coefficients
satisfy the consistency conditions
\begin{align*}
  \left\{
    \begin{aligned}
        &\sum_{j=1}^{J_r}\sum_{k=1}^{r-1}\alpha^k_{r,j}=c_r,\quad r=2,\ldots,s,\\
        &\sum_{j=1}^{J}\sum_{k=1}^{s}\beta^k_j=1.
    \end{aligned}
  \right.
\end{align*}
For design order \(p\), the coefficients \(\alpha^k_{r,j}\) and
\(\beta^k_j\) are required to satisfy the corresponding CF-RKEI
order conditions (see e.g.\ \cite{CMO2003,Owren2006,CK2009}).
\end{definition}

\begin{algorithm}[htbp]
\caption{The CF-RKEI method.}
\label{alg:RKEI}
\begin{algorithmic}[1]
\STATE $Y_1 \leftarrow Y^n$
\FOR{$r = 2,\dots,s$}
    \STATE
    $Y_r \leftarrow
    \exp\Bigl(
        \tau\textstyle\sum_{k=1}^{r-1}
        \alpha^k_{r,J_r} C(Y_k)
    \Bigr)
    \cdots
    \exp\Bigl(
        \tau\textstyle\sum_{k=1}^{r-1}
        \alpha^k_{r,1} C(Y_k)
    \Bigr)Y^n$
\ENDFOR
\STATE
$Y^{n+1} \leftarrow
\exp\Bigl(
    \tau\textstyle\sum_{k=1}^{s}
    \beta^k_{J} C(Y_k)
\Bigr)
\cdots
\exp\Bigl(
    \tau\textstyle\sum_{k=1}^{s}
    \beta^k_{1} C(Y_k)
\Bigr)Y^n$
\end{algorithmic}
\end{algorithm}

With the above definition, the CF-RKEI method is given by Algorithm~\ref{alg:RKEI}, 
which computes the stage values \(Y_r\) and the final update \(Y^{n+1}\) using the exponential of linear combinations of the matrices \(C(Y_k)\). 

\subsection{Construction of the self-consistent exact frozen flow}
\label{sec:NuF:SCfield}
We define each frozen transport exponential by its exact characteristic
flow, using Corollary~\ref{prop:frozen-pullback}. Thus the exponential
notation below denotes the transport evolution operator, rather than
a matrix exponential or a power series of bounded operators.
This distinction matters because $C(u)=-\mathcal L_{b[u]}$
differentiates its argument and is generally unbounded on $L^\infty$.

Related commutator-free schemes, including nonlinear Vlasov applications,
have been developed
in \cite{Celledoni2005,CK2009,CBQ2021}.
Here we use the characteristic-flow representation to construct the
method and to give local error estimates. Global stability and
convergence of the coupled PDE scheme are beyond these estimates.
For comparison, convergence analyses for particular Vlasov--Poisson
splitting schemes are available in
\cite{BesseMehrenberger2008,EinkemmerOstermann2014Strang,EinkemmerOstermann2014DG};
those results concern different schemes and do not directly establish
convergence of the present construction.

To rewrite the CF-RKEI scheme in terms of flows, 
we associate each frozen operator appearing in the CF-RKEI composition 
with the flow generated by the corresponding frozen vector field. 
More precisely, for the \(j\)th exponential in stage \(r\), define
\begin{equation}\label{eq:r-stage}
    b_{r,j}
    :=
    \sum_{k=1}^{r-1}
    \alpha_{r,j}^k\, b[u_k],
    \qquad
    j=1,\ldots,J_r,
\end{equation}
and, for the final update, define
\begin{equation}\label{eq:final-stage}
    b_j
    :=
    \sum_{k=1}^{s}
    \beta_j^k\, b[u_k],
    \qquad
    j=1,\ldots,J,
\end{equation}
where \(b[u_k]\) denotes the frozen vector field associated with the stage value \(u_k\).
Hence, by Corollary~\ref{prop:frozen-pullback},
\begin{equation*}
    \exp\left(
        \tau
        \sum_{k=1}^{r-1}
        \alpha_{r,j}^k C(u_k)
    \right)g
    =
    g\circ\zeta_\tau^{b_{r,j}},
\end{equation*}
for every sufficiently regular \(g\).
Likewise,
\begin{equation*}
    \exp\left(
        \tau
        \sum_{k=1}^{s}
        \beta_j^k C(u_k)
    \right)g
    =
    g\circ\zeta_\tau^{b_j}.
\end{equation*}
Moreover, care must be taken with the order of composition.
Writing $C_j:=-\mathcal L_{b_j}$, the operator product
\begin{equation*}
    \exp(\tau C_{J})
    \cdots
    \exp(\tau C_{1})g
\end{equation*}
corresponds to
\begin{equation*}
    g\circ
    \zeta_\tau^{b_1}
    \circ\cdots\circ
    \zeta_\tau^{b_J}.
\end{equation*}
This reversal is a direct consequence of the contravariant nature of pullbacks. 
Indeed, for two frozen vector fields \(b_1\) and \(b_2\),
\begin{equation*}
  \begin{aligned}
      \exp(\tau C_2)\exp(\tau C_1)g
      &=
      \exp(\tau C_2)
      \bigl(g\circ\zeta_\tau^{b_1}\bigr) \\
      &=
      g\circ
      \zeta_\tau^{b_1}
      \circ
      \zeta_\tau^{b_2}.
  \end{aligned}
\end{equation*}
Applying these identities successively to each exponential factor yields the flow representation
\begin{equation}
    u_r
    =
    u^n
    \circ
    \zeta_\tau^{b_{r,1}}
    \circ\cdots\circ
    \zeta_\tau^{b_{r,J_r}},
    \qquad r=2,\ldots,s,
\end{equation}
and
\begin{equation}
    u^{n+1}
    =
    u^n
    \circ
    \zeta_\tau^{b_1}
    \circ\cdots\circ
    \zeta_\tau^{b_J}.
\end{equation}
Thus, each commutator-free exponential is realized as the pullback induced by a flow associated with a frozen linear combination of stage vector fields.
Consequently, although the pullbacks are written from left to right as
\(\zeta_\tau^{b_1},\ldots,\zeta_\tau^{b_J}\), their action on a point
\(z\in M\) is evaluated from right to left:
\begin{equation*}
    u^{n+1}(z)
    =
    u^n\left(
        \zeta_\tau^{b_1}
        \circ\cdots\circ
        \zeta_\tau^{b_J}(z)
    \right).
\end{equation*}

\begin{algorithm}[htbp]
\caption{The CF-RKEI method in flow form.}
\label{alg:RKEI-flow}
\begin{algorithmic}[1]

\STATE $u_1 \leftarrow u^n$

\FOR{$r=2,\dots,s$}

    \FOR{$j=1,\dots,J_r$}
        \STATE
        $b_{r,j}
        \leftarrow
        \displaystyle
        \sum_{k=1}^{r-1}
        \alpha_{r,j}^k\, b[u_k]$
    \ENDFOR

    \STATE
    $u_r
    \leftarrow
    u^n
    \circ
    \zeta_\tau^{b_{r,1}}
    \circ\cdots\circ
    \zeta_\tau^{b_{r,J_r}}$

\ENDFOR

\FOR{$j=1,\dots,J$}
    \STATE
    $b_j
    \leftarrow
    \displaystyle
    \sum_{k=1}^{s}
    \beta_j^k\,b[u_k]$
\ENDFOR

\STATE
$u^{n+1}
\leftarrow
u^n
\circ
\zeta_\tau^{b_1}
\circ\cdots\circ
\zeta_\tau^{b_J}$

\end{algorithmic}
\end{algorithm}

We next estimate the local error caused by replacing a varying
transport field with a prescribed frozen field. The estimate does not
require a Lipschitz bound for the map $u\mapsto b[u]$ on a set
containing all numerical iterates.

Consider
\begin{equation}
  \partial_tu+b[u]\cdot\nabla u=0
  \label{eq:lie-euler-transport}
\end{equation}
on $M=\mathbb T^\ell\times\mathbb R^{m-\ell}$, equipped with its
flat metric. Write $\|\cdot\|_\infty=\|\cdot\|_{L^\infty(M)}$.

\begin{lemma}
\label{lem:lie-euler-flow-defect}
Let $u$ be a classical solution of \eqref{eq:lie-euler-transport}
on $[t_n,t_n+\tau]$, with bounded spatial gradient.
Let $c$ be a prescribed $C^1$ vector field with a complete flow,
and suppose the integrand below is integrable in the supremum norm.
Define
\begin{equation}
  d_c^{n+1}:=u(t_n+\tau)-u(t_n)\circ\zeta_\tau^c.
  \label{eq:lie-euler-flow-defect}
\end{equation}
Then
\begin{equation}
  d_c^{n+1}
  =-\int_0^\tau
    \Bigl[(b[u(t_n+s)]-c)\cdot\nabla u(t_n+s)\Bigr]
      \circ\zeta_{\tau-s}^c\,ds,
  \label{eq:lie-euler-flow-defect-identity}
\end{equation}
and consequently
\begin{equation}
  \|d_c^{n+1}\|_\infty
  \leq\int_0^\tau
    \|(b[u(t_n+s)]-c)\cdot\nabla u(t_n+s)\|_\infty\,ds.
  \label{eq:lie-euler-flow-local-bound}
\end{equation}
In particular, for $c=b[u(t_n)]$, if
$\|b[u(t_n+s)]-b[u(t_n)]\|_\infty\leq H_n s$ and
$G_n^*:=\sup_{0\leq s\leq\tau}\|\nabla u(t_n+s)\|_\infty$,
then $\|d_c^{n+1}\|_\infty\leq G_n^*H_n\tau^2/2$.
\end{lemma}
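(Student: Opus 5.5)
The plan is a Duhamel (variation-of-constants) argument along the frozen characteristics of $c$. Fix $z\in M$ and define the auxiliary function
\[
  F(s):=u(t_n+s)\bigl(\zeta_{\tau-s}^c(z)\bigr),\qquad s\in[0,\tau].
\]
Then $F(\tau)=u(t_n+\tau)(z)$, since $\zeta_0^c=\mathrm{Id}$. Likewise $F(0)=\bigl(u(t_n)\circ\zeta_\tau^c\bigr)(z)$. Hence $d_c^{n+1}(z)=F(\tau)-F(0)=\int_0^\tau F'(s)\,ds$, and the whole identity reduces to computing $F'$.

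By \eqref{eq:frozen-state-backward-flow}, $\zeta_{\tau-s}^c=\varphi_{s-\tau}^c$. The curve $s\mapsto\varphi^c_{s-\tau}(z)$ is therefore an integral curve of the autonomous field $c$, with velocity $c(\varphi^c_{s-\tau}(z))$. The chain rule, valid because $u$ is classical and $c\in C^1$ with complete flow, gives
\[
  F'(s)=\bigl[\partial_t u(t_n+s)+c\cdot\nabla u(t_n+s)\bigr]\bigl(\zeta^c_{\tau-s}(z)\bigr).
\]
Substituting the equation \eqref{eq:lie-euler-transport}, $\partial_t u=-b[u]\cdot\nabla u$, turns this into
\[
  F'(s)=-\bigl[(b[u(t_n+s)]-c)\cdot\nabla u(t_n+s)\bigr]\circ\zeta^c_{\tau-s}(z).
\]
Integrating in $s$ yields \eqref{eq:lie-euler-flow-defect-identity} pointwise in $z$. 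The integrability hypothesis in the statement justifies the fundamental theorem of calculus here and lets us read the identity as a Bochner/sup-norm integral.

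For \eqref{eq:lie-euler-flow-local-bound}, composition with the bijection $\zeta^c_{\tau-s}$ does not increase the supremum norm, so $\|g\circ\zeta^c_{\tau-s}\|_\infty\le\|g\|_\infty$. This uses only that $\zeta^c_{\tau-s}$ maps $M$ into $M$, not volume preservation. Taking suprema inside the integral gives the bound.

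For the particular case, I would use $|(b-c)\cdot\nabla u|\le|b-c|\,|\nabla u|$ pointwise in the flat metric, which gives
\[
  \|(b[u(t_n+s)]-c)\cdot\nabla u(t_n+s)\|_\infty\le H_n s\,G_n^*.
\]
Integrating over $[0,\tau]$ then gives $G_n^*H_n\tau^2/2$.

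The main obstacle is rigor in the chain-rule step. One needs joint differentiability of $(t,x)\mapsto u(t,x)$, which the classical-solution hypothesis supplies, and differentiability of $s\mapsto\varphi^c_{s-\tau}(z)$, which the $C^1$ field with complete flow supplies. One must also pass from the pointwise identity to the sup-norm integral; measurability in $s$ follows from continuity of the integrand, and the stated integrability assumption covers the rest. Everything else is bookkeeping.
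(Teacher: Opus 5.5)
Your proposal is correct and follows the paper's proof essentially line by line. The paper also differentiates $u(t_n+s,\gamma(s))$ along $\gamma(s)=\zeta^c_{\tau-s}(z)$, substitutes the transport equation, integrates over $[0,\tau]$, takes suprema, and integrates $G_n^*H_n s$ to get the special case.
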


\begin{proof}
For a fixed terminal point $z$, set
$\gamma(s)=\zeta_{\tau-s}^c(z)$. Then
$\dot\gamma(s)=c(\gamma(s))$, and the transport equation gives
\[
  \frac{d}{ds}u(t_n+s,\gamma(s))
  =-\bigl[(b[u(t_n+s)]-c)\cdot\nabla u(t_n+s)\bigr](\gamma(s)).
\]
Integration gives \eqref{eq:lie-euler-flow-defect-identity}.
Taking suprema gives \eqref{eq:lie-euler-flow-local-bound}.
The final estimate follows by integrating $G_n^*H_n s$.
\end{proof}

The integral estimate does not require time-Lipschitz continuity of the
field. That additional condition is only one way to obtain an
$O(\tau^2)$ one-step freezing defect. This is a local consistency
statement; controlling the accumulation of such defects requires
additional analysis of the self-consistent coupling.
We apply the identity after specifying the field in
Section~\ref{subsec:vp-exact-frozen-flow}.

\subsection{A self-consistent exact frozen flow and numerical flow for the Vlasov--Poisson system}
\label{sec:SC-NuF:VP}
We now specialize the general construction to the Vlasov--Poisson system.
We first define the corresponding self-consistent exact frozen
flow and derive a one-step error budget for its Lie--Euler
specialization. We then describe the field reconstruction and
characteristic integration that yield the self-consistent numerical flow.
The guiding-center model can be treated analogously within the
same framework and is considered in the numerical experiments
in Section~\ref{sec:Result}.

\subsubsection{A self-consistent exact frozen flow for the Vlasov--Poisson system}
\label{subsec:vp-exact-frozen-flow}
Let $q\geq1$ be an integer. For an electric field
$E[f]\in C^q(\mathbb T^d;\mathbb R^d)$,
define the phase-space vector field and transport operator
\begin{align}\label{eq:CE}
  &b^{E[f]}(x,v) := \begin{pmatrix} v \\ -E[f](x) \end{pmatrix} \in \mathfrak X(M),\\
  &C(f) := -\mathcal L_{b^{E[f]}} = -v\cdot\nabla_x + E[f](x)\cdot\nabla_v,
\end{align}
where the operator's nonlinearity is confined to the
dependence of the field $E[f]$ on $f$ through \eqref{eq:poisson}.
For a fixed $\mathcal E$, the vector field is autonomous, i.e.,
\begin{align*}
  &b^{\mathcal E}(x,v) := \begin{pmatrix} v \\ -\mathcal E(x) \end{pmatrix} 
\end{align*}
We write $\xi^{\mathcal E}_c$, $c \in \mathbb R\setminus\{0\}$, 
for elementary backward map of
\begin{equation}
  \frac{dX}{d\sigma}(\sigma) = c\,V(\sigma), \qquad
  \frac{dV}{d\sigma}(\sigma) = -c\,\mathcal{E}\big(X(\sigma)\big),\qquad \sigma\in[0,\tau],
  \label{eq:frozen-characteristics}
\end{equation}
i.e.\ $\xi^{\mathcal E}_c(z):=(X(0),V(0))$ for $(X(\tau),V(\tau))=z$.
The case $c=0$ is trivial:
both right-hand sides in \eqref{eq:frozen-characteristics} vanish,
and we therefore set $\xi^{\mathcal E}_0=\operatorname{Id}$.

\begin{lemma}
\label{lem:rate-interval}
Let $\varphi^{b^{\mathcal E}}_\delta$ denote the autonomous flow 
generated by $b^{\mathcal E}$ with an increment $\delta=c\tau$,
and let $\xi^{\mathcal E}_c$ denote the elementary frozen backward map
determined by the problem \eqref{eq:frozen-characteristics}.
Let $\zeta^{b^{\mathcal E}}_{\delta} :=
\varphi^{b^{\mathcal E}}_{-\delta}$ denote the standard backward map. 
Then
\begin{equation}
  \xi^{\mathcal E}_c = \zeta^{b^{\mathcal E}}_{\delta}.
  \label{eq:rate-interval-equivalence}
\end{equation}
That is, absorbing the constant $c$ into the vector field on the fixed
interval $[0,\tau]$, as in \eqref{eq:frozen-characteristics}, produces exactly the same
map as absorbing it into the length of a backward flow increment applied
to the unscaled field $b^{\mathcal E}$.
\end{lemma}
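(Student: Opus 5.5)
The argument is a time reparametrization of the characteristic ODE followed by uniqueness of solutions; the case \(c=0\) is treated separately. For \(c=0\), both sides are the identity: \(\xi^{\mathcal E}_0=\operatorname{Id}\) by convention, and \(\zeta^{b^{\mathcal E}}_0=\varphi^{b^{\mathcal E}}_0=\operatorname{Id}\) by the first property in Definition~\ref{def:flow-of-b}. So assume \(c\neq0\).

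Fix a terminal point \(z\in M\) and let \((X,V)\) solve \eqref{eq:frozen-characteristics} on \([0,\tau]\) with \((X(\tau),V(\tau))=z\). Introduce the rescaled time \(\theta:=c(\sigma-\tau)\), which maps \([0,\tau]\) affinely onto the interval with endpoints \(-c\tau=-\delta\) and \(0\). Define \(W(\theta):=(X,V)(\tau+\theta/c)\). The chain rule gives
\[
  \frac{dW}{d\theta}(\theta)=\frac{1}{c}\bigl(cV,\,-c\,\mathcal E(X)\bigr)=b^{\mathcal E}\bigl(W(\theta)\bigr),
  \qquad W(0)=z .
\]
Thus \(W\) is an integral curve of the autonomous field \(b^{\mathcal E}\) starting at \(z\), in the sense of Definition~\ref{def:autonomous-integral-curve}. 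By uniqueness of maximal integral curves (the Fundamental Theorem on flows cited after Definition~\ref{def:flow-of-b}), \(W(\theta)=\varphi^{b^{\mathcal E}}_\theta(z)\) for every \(\theta\) between \(-\delta\) and \(0\). Evaluating at \(\sigma=0\), i.e.\ \(\theta=-c\tau=-\delta\), gives
\[
  \xi^{\mathcal E}_c(z)=(X(0),V(0))=W(-\delta)=\varphi^{b^{\mathcal E}}_{-\delta}(z)=\zeta^{b^{\mathcal E}}_\delta(z),
\]
which is \eqref{eq:rate-interval-equivalence}. The computation is the same for \(c<0\), where the orientation of the \(\theta\)-interval reverses but the identity \(d\theta/d\sigma=c\) still holds.

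The main obstacle is the domain and well-posedness bookkeeping, not the computation. One must know that the terminal-value problem \eqref{eq:frozen-characteristics} has a unique solution on all of \([0,\tau]\), so that \(\xi^{\mathcal E}_c(z)\) is well defined, and that \(-\delta\in\mathcal D_z\). I would handle this by running the argument in both directions. If \(\varphi^{b^{\mathcal E}}_{-\delta}(z)\) is defined, then \(\sigma\mapsto\varphi^{b^{\mathcal E}}_{c(\sigma-\tau)}(z)\) solves \eqref{eq:frozen-characteristics} on \([0,\tau]\). Conversely, the substitution above produces the integral curve from any solution of \eqref{eq:frozen-characteristics}. Hence the two maps have the same domain and agree on it.

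For \(\mathcal E\in C^q\) with \(q\ge1\), the field \(b^{\mathcal E}\) is \(C^1\), hence locally Lipschitz, so uniqueness holds. Completeness also holds: the periodic \(x\)-domain makes \(\mathcal E\) bounded, so \(|V|\) grows at most linearly in \(\sigma\), which rules out finite-time blow-up. Consequently both maps are defined on all of \(M\).
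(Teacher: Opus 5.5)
Your proof is correct and is essentially the paper's argument. The paper checks that $\sigma\mapsto\varphi^{b^{\mathcal E}}_{c(\sigma-\tau)}(z)$ satisfies $\dot Z=c\,b^{\mathcal E}(Z)$ with $Z(\tau)=z$ and then invokes uniqueness for the scaled terminal-value problem, which is exactly the ``converse'' direction in your domain bookkeeping; your main substitution $\theta=c(\sigma-\tau)$ runs the same reparametrization the other way, using uniqueness of integral curves of $b^{\mathcal E}$, and your remarks on $c=0$, completeness and matching domains correctly fill in what the paper leaves implicit when it simply takes the flow to be complete.
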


\begin{proof}
Let $\varphi_s^{b^{\mathcal E}}$ denote the complete flow
generated by $b^{\mathcal E}$.
For any terminal point $z$, the curve
\[
  Z_c(\sigma;z)
  :=
  \varphi_{c(\sigma-\tau)}^{b^{\mathcal E}}(z)
\]
satisfies
$\dot Z_c=c\,b^{\mathcal E}(Z_c)$ and $Z_c(\tau;z)=z$.
By uniqueness, this is the characteristic curve defining
$\xi_c^{\mathcal E}$.
Hence
\[
  \xi_c^{\mathcal E}(z)
  =
  \varphi_{-c\tau}^{b^{\mathcal E}}(z)
  =
  \zeta_{c\tau}^{b^{\mathcal E}}(z).
\]
\end{proof}

We now identify each factor of Algorithm~\ref{alg:RKEI-flow}, applied to
$u=f$. We restrict the Vlasov--Poisson construction below to CF-RKEI
tableaux for which the sum of the coefficients in every exponential
factor is nonzero. All CF-RKEI tableaux used in this work satisfy
this restriction.
For stage $r=2,\dots,s$ and factor $j=1,\dots,J_r$, assume
$a^\alpha_{r,j} := \sum_{k=1}^{r-1}\alpha^k_{r,j} \neq 0$ and set
\begin{equation}
  \bar E_{r,j} := \big(a^\alpha_{r,j}\big)^{-1}\sum_{k=1}^{r-1}\alpha^k_{r,j}\,E^{(k-1)},
  \qquad
  \xi_{r,j} := \xi^{\bar E_{r,j}}_{a^\alpha_{r,j}},
\end{equation}
and define the $r$-th stage composition 
\begin{equation}
  \Xi_r := \xi_{r,1}\circ\cdots\circ\xi_{r,J_r}.
\end{equation}
For the final stage and factor $j=1,\cdots, J$, assume
$a^\beta_{j} := \sum_{k=1}^{s}\beta^k_{j} \neq 0$ and set
\begin{equation}
  \bar E^\beta_j := \big(a^\beta_j\big)^{-1}\sum_{k=1}^{s}\beta^k_j\,E^{(k-1)},
  \qquad
  \Psi^{n\tau}_{(n+1)\tau} := \xi^{\bar E^\beta_{1}}_{a^\beta_1} \circ\cdots\circ \xi^{\bar E^\beta_{J}}_{a^\beta_J}.
  \label{eq:finalmap}
\end{equation}
so that, the $r$-th stage update in
Algorithm~\ref{alg:RKEI-flow} is $f_r = f^n\circ\Xi_r$ and final update is $f^{n+1} = f^n\circ\Psi^{n\tau}_{(n+1)\tau}$. 
Starting from $f^0=f_0$ and $\Psi^0_0:=\operatorname{Id}$,
define the history map recursively by
\[
  \Psi^0_{(n+1)\tau}
  :=
  \Psi^0_{n\tau}\circ\Psi^{n\tau}_{(n+1)\tau}.
\]
Then $f^{n+1}=f_0\circ\Psi^0_{(n+1)\tau}$.
The stage and accepted fields are determined self-consistently by
$E^{(0)}=E^n=E[f^n]$,
$E^{(r-1)}=E[f_r]$ for $r=2,\ldots,s$,
and $E^{n+1}=E[f^{n+1}]$.

Before describing the numerical realization, we use
Lemma~\ref{lem:lie-euler-flow-defect} to examine the Lie--Euler
specialization. Its exact frozen update is
\begin{equation}
  f^{n+1}=f^n\circ\xi_1^{E[f^n]}.
  \label{eq:vp-lie-euler-frozen-update}
\end{equation}
We next allow an approximate input distribution, a prescribed frozen
electric field, and an approximate backward map. Their construction
is left unspecified here; the following estimate prepares the
numerical realization in Section~\ref{subsec:vp-numerical-flow}.
Since freezing leaves the kinetic component $v$ unchanged, the
freezing defect involves only the electric-field difference and
the velocity gradient of the distribution.

\begin{proposition}
\label{prop:vp-one-step-budget}
Let $f$ be a classical solution of \eqref{eq:vp} on
$[t_n,t_n+\tau]$, with bounded spatial and velocity gradients.
Let $g_n$ be a bounded approximation of $f(t_n)$ on
$M=\mathbb T^d\times\mathbb R^d$, and let
$\widetilde E_n\in C^1(\mathbb T^d;\mathbb R^d)$ be any
prescribed frozen electric field. Set
\[
  c_n:=b^{\widetilde E_n},\qquad
  e_n=\|g_n-f(t_n)\|_\infty,
\]
\[
  G_n=\|\nabla_{x,v}f(t_n)\|_\infty,\qquad
  G_{v,n}=\sup_{0\leq s\leq\tau}
                  \|\nabla_v f(t_n+s)\|_\infty.
\]
Let $K\subset M$ be a bounded evaluation set, and let
$\widehat X_n:K\to M$ approximate the exact frozen backward map
$\xi_1^{\widetilde E_n}=\zeta_\tau^{c_n}$, with
\[
  \eta_n=\sup_{z\in K}
    d_M\bigl(\widehat X_n(z),\zeta_\tau^{c_n}(z)\bigr)<\infty.
\]
Here $d_M$ is the flat-metric distance, and
$\|h\|_{\infty,K}:=\sup_{z\in K}|h(z)|$.
Then the update $\widehat f^{n+1}=g_n\circ\widehat X_n$ satisfies
\begin{equation}
  \|f(t_n+\tau)-\widehat f^{n+1}\|_{\infty,K}
  \leq e_n+
    G_{v,n}\int_0^\tau
       \|E[f(t_n+s)]-\widetilde E_n\|_{L^\infty(\mathbb T^d)}\,ds
    +G_n\eta_n.
  \label{eq:vp-one-step-budget}
\end{equation}
If, additionally,
\[
  \|E[f(t_n+s)]-E[f(t_n)]\|_{L^\infty(\mathbb T^d)}\leq H_n s,
  \qquad 0\leq s\leq\tau,
\]
and
$\varepsilon_n:=\|\widetilde E_n-E[f(t_n)]\|_{L^\infty(\mathbb T^d)}$,
then
\begin{equation}
  \|f(t_n+\tau)-\widehat f^{n+1}\|_{\infty,K}
  \leq e_n+G_{v,n}\bigl(H_n\tau^2/2+\tau\varepsilon_n\bigr)
            +G_n\eta_n.
  \label{eq:vp-one-step-budget-lipschitz}
\end{equation}
\end{proposition}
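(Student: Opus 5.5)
We plan a three-term triangle inequality at each evaluation point $z\in K$, inserting the exact frozen update $f(t_n)\circ\zeta_\tau^{c_n}$ and the perturbed-map update $f(t_n)\circ\widehat X_n$:
\[
  f(t_n+\tau)-g_n\circ\widehat X_n
  =\bigl[f(t_n+\tau)-f(t_n)\circ\zeta_\tau^{c_n}\bigr]
  +\bigl[f(t_n)\circ\zeta_\tau^{c_n}-f(t_n)\circ\widehat X_n\bigr]
  +\bigl[f(t_n)\circ\widehat X_n-g_n\circ\widehat X_n\bigr].
\]
The third bracket is bounded pointwise by $\|f(t_n)-g_n\|_\infty=e_n$, since $\widehat X_n(z)\in M$ and the sup norm is over all of $M$. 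The second bracket is controlled by the mean value inequality on the flat manifold $M=\mathbb T^d\times\mathbb R^d$. We join $\zeta_\tau^{c_n}(z)$ and $\widehat X_n(z)$ by a minimizing geodesic (a straight segment in a lift to $\mathbb R^{2d}$, using periodicity in $x$), of length $d_M(\widehat X_n(z),\zeta_\tau^{c_n}(z))\le\eta_n$. The gradient bound $G_n$ then gives $|f(t_n)(\zeta_\tau^{c_n}(z))-f(t_n)(\widehat X_n(z))|\le G_n\eta_n$.

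The first bracket is exactly the freezing defect $d_{c_n}^{n+1}$ of Lemma~\ref{lem:lie-euler-flow-defect}, applied with the prescribed field $c=c_n=b^{\widetilde E_n}$. Here $\ell=m-\ell=d$, and $c_n$ is $C^1$ with complete flow. Completeness holds because $\widetilde E_n$ is continuous on the compact torus, hence bounded, so $V$ grows at most linearly in $\sigma$ and $X$ stays finite. The lemma's bound \eqref{eq:lie-euler-flow-local-bound} involves the integrand
\[
  (b[f(t_n+s)]-c_n)\cdot\nabla f(t_n+s).
\]
Since both fields share the kinetic component $v$, the difference is $(0,\,-(E[f(t_n+s)]-\widetilde E_n))$, and the integrand reduces to $-(E[f(t_n+s)]-\widetilde E_n)\cdot\nabla_v f(t_n+s)$. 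Its sup norm is at most $G_{v,n}\|E[f(t_n+s)]-\widetilde E_n\|_{L^\infty(\mathbb T^d)}$. Integrating over $s\in[0,\tau]$ gives the middle term of \eqref{eq:vp-one-step-budget}; restricting the sup norm from $M$ to $K$ only decreases it. The integrability hypothesis of the lemma is met because $G_{v,n}<\infty$ and $E[f(t_n+s)]-\widetilde E_n$ is bounded, with measurability following from the continuity of the classical solution.

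For \eqref{eq:vp-one-step-budget-lipschitz}, the triangle inequality gives
\[
  \|E[f(t_n+s)]-\widetilde E_n\|_{L^\infty}\le H_n s+\varepsilon_n,
\]
and integrating yields $H_n\tau^2/2+\tau\varepsilon_n$.

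The main obstacle is the geometric term $G_n\eta_n$. We must justify the mean value inequality along a segment on the torus, where the distance is a minimum over lifts. We handle this by applying it on the universal cover $\mathbb R^{2d}$ to the periodic lift of $f(t_n)$, which has the same gradient bound $G_n$, choosing the lift realizing $d_M$. The remaining steps are direct applications of the lemma.
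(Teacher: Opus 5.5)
Your proposal is correct and follows the paper's proof essentially verbatim. It uses the same three-term splitting through $f(t_n)\circ\zeta_\tau^{c_n}$ and $f(t_n)\circ\widehat X_n$, applies Lemma~\ref{lem:lie-euler-flow-defect} with $c=c_n$ so that only the $\nabla_v f$ term survives, and integrates the triangle inequality $\|E[f(t_n+s)]-\widetilde E_n\|_{L^\infty}\le H_n s+\varepsilon_n$ in $s$, while your extra justifications (completeness of the flow of $c_n$, and the mean value inequality on $\mathbb T^d\times\mathbb R^d$ via a lift to $\mathbb R^{2d}$) make explicit steps the paper states without detail.
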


\begin{proof}
Since $\widetilde E_n$ is $C^1$ and periodic, $c_n$ generates a
complete flow. We apply Lemma~\ref{lem:lie-euler-flow-defect}
with $u=f$ and $c=c_n$. The field difference has zero $x$-component,
so the resulting bound contains only $\nabla_v f$.
Next decompose, writing $f_n=f(t_n)$ and
$\zeta=\zeta_\tau^{c_n}$,
\begin{equation*}
\begin{aligned}
  f(t_n+\tau)-g_n\circ\widehat X_n
  ={}&[f(t_n+\tau)-f_n\circ\zeta]\\
     &+[f_n\circ\zeta-f_n\circ\widehat X_n]\\
     &+[(f_n-g_n)\circ\widehat X_n].
\end{aligned}
\end{equation*}
The second and third terms are bounded on $K$ by $G_n\eta_n$
and $e_n$, respectively. This proves \eqref{eq:vp-one-step-budget}.
The triangle inequality for the electric-field difference and
integration in $s$ give \eqref{eq:vp-one-step-budget-lipschitz}.
\end{proof}

Proposition~\ref{prop:vp-one-step-budget} separates the incoming
distribution error, the error from a varying or approximated field,
and the error in tracing the frozen characteristics. It does not
require bounds on derivatives of $g_n$, or a common regularity set
containing all numerical iterates. It also shows why field accuracy
becomes more demanding when $G_{v,n}$ grows through filamentation.
For exact initialization and an exact step-initial field, the bound
reduces to an $O(\tau^2)$ local freezing defect plus the characteristic
integration error, when the stated time-Lipschitz bound holds.

This is a one-step error estimate for the Lie--Euler specialization.
In a self-consistent computation, $\varepsilon_n$ generally also
depends on the incoming distribution error. We do not estimate that
dependence here; thus the coefficient $1$ in front of $e_n$ does not
establish nonlinear nonexpansiveness, stability, or global convergence.
Nor does this first-order example prove the PDE order of the
higher-order CF-RKEI compositions. Their coefficients provide a
design order, which is assessed numerically below.

\subsubsection{A self-consistent numerical flow for the Vlasov--Poisson system}
\label{subsec:vp-numerical-flow}
We denote by $\Psi^0_{(n+1)\tau,h}$ the self-consistent numerical
flow obtained by numerically realizing the exact frozen-flow
construction.
The subscript $h$ indicates the discrete field reconstruction
and characteristic integration.
Writing $\Psi^{n\tau}_{(n+1)\tau,h}$ for the numerical one-step
backward map and setting $\Psi^0_{0,h}:=\operatorname{Id}$, we define
\begin{equation}
  \Psi^0_{(n+1)\tau,h}
  :=
  \Psi^0_{n\tau,h}\circ\Psi^{n\tau}_{(n+1)\tau,h}.
  \label{eq:numerical-history-flow}
\end{equation}
The numerical distribution is evaluated as
\begin{equation}
  f_{\tau,h}^{n+1}
  :=
  f_0\circ\Psi^0_{(n+1)\tau,h}.
  \label{eq:numerical-pullback}
\end{equation}

For a fixed field $\mathcal E$, let
$\widehat\xi_c^{\mathcal E}$ denote the numerical approximation
of $\xi_c^{\mathcal E}$ obtained by integrating
\eqref{eq:frozen-characteristics} backward from $\sigma=\tau$
to $\sigma=0$.
We use an $s_{\mathrm{in}}$-stage explicit Runge--Kutta method of order
$p_{\mathrm{in}}$.
Under the regularity assumptions required for its local error
estimate, the characteristic integration error on a bounded
phase-space set $K$ is measured using the flat-metric distance $d_M$.
For maps $F,G:K\to M$, set
\[
  \|F-G\|_K:=\sup_{z\in K}d_M\bigl(F(z),G(z)\bigr).
\]
Then
\begin{equation}
  \big\|
    \xi_c^{\mathcal E}
    -
    \widehat\xi_c^{\mathcal E}
  \big\|_K
  \leq
  C_K\tau^{p_{\mathrm{in}}+1}
  \label{eq:inner-order-general}
\end{equation}
for sufficiently small $\tau>0$.
The constant $C_K$ is independent of $\tau$ but may depend
on $c$ and the relevant bounds on $\mathcal E$.
This estimate compares exact and numerical characteristic maps
for the same reconstructed field.
For an outer method with design order $p$, choosing
$p_{\mathrm{in}}\geq p$ is a natural requirement for targeting an
$O(\tau^{p+1})$ local characteristic error, provided that the required
bounds hold uniformly across the frozen factors and time steps.
This choice alone does not establish an order-$p$ PDE error estimate.
Preserving the overall order additionally requires stable and
consistent stage-field reconstruction and control of velocity
quadrature, Poisson discretization, field interpolation.

Let $E_h^{(k-1)}$ denote the reconstructed stage fields,
with $E_h^{(0)}:=E_h^n$.
Using the coefficients of the exact frozen-flow construction,
we define
\begin{equation}
  \bar E_{r,j,h}
  :=
  \big(a^\alpha_{r,j}\big)^{-1}
  \sum_{k=1}^{r-1}\alpha^k_{r,j}E_h^{(k-1)},
  \qquad
  \widehat\xi_{r,j,h}
  :=
  \widehat\xi^{\bar E_{r,j,h}}_{a^\alpha_{r,j}}.
\end{equation}
The numerical stage composition is
\begin{equation}
  \Xi_{r,h}
  :=
  \widehat\xi_{r,1,h}\circ\cdots\circ
  \widehat\xi_{r,J_r,h}.
\end{equation}
Similarly, the final frozen fields and the numerical one-step
map are
\begin{align}
  \bar E^\beta_{j,h}
  &:=
  \big(a^\beta_j\big)^{-1}
  \sum_{k=1}^{s}\beta^k_j E_h^{(k-1)},\\
  \Psi^{n\tau}_{(n+1)\tau,h}
  &:=
  \widehat\xi^{\bar E^\beta_{1,h}}_{a^\beta_1}
  \circ\cdots\circ
  \widehat\xi^{\bar E^\beta_{J,h}}_{a^\beta_J}.
  \label{eq:numerical-one-step-flow}
\end{align}

Given the accepted numerical history map $\Psi^0_{n\tau,h}$,
we reconstruct the electric fields required by the current
time step through velocity quadrature, a discrete Poisson
solve, and field interpolation.
We use a uniform tensor-product spatial grid
$\{x_i\}_{i=1}^{N_x}$ and velocity-cell midpoints
$\{v_\ell\}_{\ell=1}^{N_v}$ in a truncated velocity domain.
For a finite composition $\Theta_h$ of numerical elementary
backward maps within the current time step, the charge density
is approximated by
\begin{equation}
  \rho_{i,h}[\Theta_h]
  :=
  \bar\rho_0
  -
  \sum_{\ell=1}^{N_v}w_\ell\,
  f_0\!\left(
    \Psi^0_{n\tau,h}
    \bigl(\Theta_h(x_i,v_\ell)\bigr)
  \right),
  \label{eq:numerical-charge-density}
\end{equation}
where $w_\ell$ is the volume of the corresponding velocity cell.

The grid values $\{\rho_{i,h}[\Theta_h]\}_{i=1}^{N_x}$
are transformed by FFT.
Each nonzero Fourier mode of the electric field is recovered
through \eqref{eq:poisson}, and the field values
$\{E_{i,h}[\Theta_h]\}_{i=1}^{N_x}$ are obtained by the inverse FFT.
The zero charge mode is omitted, so the periodic Poisson solver
acts on the zero-mean component of the computed charge density.
The zero electric-field mode is set to zero.
Periodic cubic spline interpolation of these field values
provides a continuous field that can be evaluated at the
off-grid points encountered during characteristic integration.
Together, quadrature, the Poisson solve, and interpolation
define a discrete field functional $E_h[\cdot]$.
We introduce the field-reconstruction functional
\begin{equation}
  \mathcal N_{n,h}[\Theta_h]
  :=
  E_h\!\left[
    f_0\circ\Psi^0_{n\tau,h}\circ\Theta_h
  \right].
  \label{eq:numerical-field-functional}
\end{equation}
Its argument $\Theta_h$ is a within-step composition;
the accepted history is already included in this definition.

The first stage uses the accepted field
$E_h^{(0)}:=E_h^n$.
For each subsequent stage $r=2,\ldots,s$, the previously
computed fields determine $\Xi_{r,h}$, and the reconstructed
stage field is
\begin{equation}
  E_h^{(r-1)}
  :=
  \mathcal N_{n,h}[\Xi_{r,h}].
  \label{eq:numerical-stage-fields}
\end{equation}
These fields are then used to form the frozen linear
combinations for subsequent stages and the final update.
Once the numerical one-step map has been constructed,
the field accepted at the next time level is
\begin{equation}
  E_h^{n+1}
  :=
  \mathcal N_{n,h}
  [\Psi^{n\tau}_{(n+1)\tau,h}]
  =
  E_h[f_{\tau,h}^{n+1}].
  \label{eq:numerical-accepted-field}
\end{equation}

We refer to this construction as the self-consistent numerical
flow (SC-NuF).
The tableaux for SC-NuF-CF1, SC-NuF-CF2, and SC-NuF-CF3G
are given in Appendix~\ref{app:low-order-tableaux}.
Temporal self-convergence tests in Section~\ref{sec:Result} give
rates consistent with the design orders over the tested range.

\begin{remark}[Scope of the local error estimates]
\label{rem:scnuf-fully-discrete-convergence}
For SC-NuF-CF1, the quantities in
Proposition~\ref{prop:vp-one-step-budget} correspond to
\[
  g_n=f_{\tau,h}^n,\qquad
  \widetilde E_n=E_h^n,\qquad
  \widehat X_n=\widehat\xi_1^{E_h^n}.
\]
The proposition separates the discrepancy in the reconstructed field
from the characteristic integration error for that same field.
The field discrepancy includes
the effects of the incoming distribution, velocity truncation and
quadrature, the Poisson solve, and field interpolation.
Periodic cubic splines provide the $C^1$ regularity needed to define
the frozen flow, but do not by themselves provide uniform error bounds
for the coupled numerical evolution.
Higher-order compositions additionally require control of their
intermediate stage errors. We do not derive a fully discrete
convergence estimate for SC-NuF in this work.
In related nonlinear problems, temporal and spatial errors have been
analyzed separately by introducing an intermediate time-discrete
system. This approach is developed for linearized Galerkin methods
for nonlinear parabolic equations in
\cite{LiSun2013Linearized} and for characteristics-mixed finite element
methods for miscible displacement in
\cite{WangSiSun2014Characteristics}.
These studies suggest a possible approach to a future fully discrete
analysis of SC-NuF, for which estimates of the coupled field
reconstruction and numerical characteristic maps remain to be
established.
\end{remark}

\subsection{Diffeomorphism and volume preservation}
\label{sec:CP}

We next investigate the geometric properties of the self-consistent exact frozen flow. 
We first establish the diffeomorphism property of the exact frozen-field characteristic flows $\xi_c^{\mathcal{E}}$
and the integral invariance of the associated exponential evolution operators. 
The exponential-pullback identity then allows us 
to establish volume preservation of the exact frozen-field flows. 
Finally, we show that this property is retained under the finite compositions defining the stage, final, and history maps.

Throughout this subsection, let $M:=\mathbb T^d\times\mathbb R^d$,
with the integer $q\geq1$ fixed above.
The case $q=1$ suffices for volume preservation and the Jacobian
determinant identities below; higher $q$ gives the corresponding
regularity of the flow maps.
Unless otherwise stated, we assume that the fixed electric field $\mathcal{E} \in C^q(\mathbb T^d;\mathbb R^d)$
and $c\in\mathbb R\setminus\{0\}$,
and consider the frozen vector field
\begin{align*}
  b(x,v) = b^{\mathcal E}(x,v)
  :=
  \begin{pmatrix}
    \,v\\
    -\,\mathcal E(x)
  \end{pmatrix}.
\end{align*}

\begin{lemma}[Diffeomorphism of exact frozen-field flows]
\label{lem:frozen-diffeomorphism}
The flow map $\varphi_\delta^b$ generated by $b$ is a
$C^q$-diffeomorphism of $M$ for every $\delta\in\mathbb R$.
In particular, the backward characteristic map
$\xi_c^{\mathcal E}$ defined in
\eqref{eq:frozen-characteristics} is a
$C^q$-diffeomorphism of $M$.
\end{lemma}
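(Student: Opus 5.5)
The plan has three steps: show that the flow of $b=b^{\mathcal E}$ is global, show that each time-$\delta$ map is $C^q$ with a $C^q$ inverse, and then read off the statement for $\xi_c^{\mathcal E}$ from Lemma~\ref{lem:rate-interval}.

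\emph{Completeness.} Since $\mathcal E\in C^q(\mathbb T^d;\mathbb R^d)$ with $q\ge1$ and $\mathbb T^d$ is compact, both $\|\mathcal E\|_\infty$ and $\|\nabla_x\mathcal E\|_\infty$ are finite. Hence $b$ is globally Lipschitz on $M=\mathbb T^d\times\mathbb R^d$ with respect to the flat metric. Equivalently, lift to $\mathbb R^d\times\mathbb R^d$ with $\mathcal E$ periodic; the lifted field has linear growth $|b(x,v)|\le |v|+\|\mathcal E\|_\infty$. Along any integral curve, the second component satisfies the a priori bound
\[
|V(\sigma)|\le |V(0)|+\|\mathcal E\|_\infty|\sigma|,
\]
so the velocity cannot blow up in finite time. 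The position component evolves on the compact torus and needs no bound. By the escape lemma (the maximal integral curve leaves every compact set if its interval is bounded), every maximal integral curve is therefore defined on all of $\mathbb R$. So the flow domain is $\mathcal D=\mathbb R\times M$ and $b$ is complete.

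\emph{Regularity and inverse.} The Fundamental Theorem on flows \cite[Thm.~9.12]{Lee2013} is stated for smooth fields, so I will instead use the standard $C^q$-dependence theorem for ODEs with $C^q$ right-hand side. The field $b$ is $C^q$: its $v$-component is linear and its $x$-dependence is through $\mathcal E\in C^q$. That theorem gives $(\delta,z)\mapsto\varphi^b_\delta(z)$ of class $C^q$ on $\mathbb R\times M$. Openness and the group law go through exactly as in Proposition~\ref{prop:flow-properties}, by specializing to the autonomous case $\Phi_s^t=\varphi_{t-s}$. The group law gives $\varphi^b_\delta\circ\varphi^b_{-\delta}=\varphi^b_{-\delta}\circ\varphi^b_\delta=\operatorname{Id}_M$ for every $\delta$, with all maps globally defined by completeness. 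Thus $\varphi^b_\delta$ is a bijection of $M$ whose inverse $\varphi^b_{-\delta}$ is also $C^q$, i.e.\ a $C^q$-diffeomorphism.

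\emph{Transfer to $\xi_c^{\mathcal E}$.} By Lemma~\ref{lem:rate-interval}, $\xi_c^{\mathcal E}=\zeta^{b}_{c\tau}=\varphi^b_{-c\tau}$, so $\xi_c^{\mathcal E}$ is a $C^q$-diffeomorphism by the previous step. The degenerate case $c=0$ gives $\operatorname{Id}$ and is trivial.

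The main obstacle is the regularity bookkeeping in the second step. One must make sure that $C^q$ rather than $C^\infty$ data still yields a $C^q$ flow; this is the classical result that the variational equations have $C^{q-1}$ coefficients, which I would cite rather than reprove. The completeness argument on the noncompact factor $\mathbb R^d$ is the other point needing care, and the linear-growth bound settles it.
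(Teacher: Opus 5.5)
Your proposal is correct and follows essentially the same route as the paper: compactness of $\mathbb T^d$ makes $b^{\mathcal E}$ a globally Lipschitz $C^q$ field, so standard ODE theory gives a global $C^q$ flow, the group law gives $(\varphi_\delta^b)^{-1}=\varphi_{-\delta}^b$, and Lemma~\ref{lem:rate-interval} transfers the result to $\xi_c^{\mathcal E}=\varphi_{-c\tau}^b$. Your explicit velocity bound with the escape lemma, and your use of the $C^q$-dependence theorem in place of the smooth flow theorem, spell out details that the paper compresses into ``standard ODE theory.''
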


\begin{proof}
Since $\mathbb T^d$ is compact and $q\geq 1$,
the derivative $D\mathcal E$ is bounded.
Consequently, $b=b^{\mathcal E}$ is a
globally Lipschitz $C^q$ vector field on $M$.

Standard ODE theory therefore guarantees that
its flow $\varphi_\delta^b$ exists uniquely
for all $\delta\in\mathbb R$ and depends $C^q$-smoothly
on the initial data. Moreover, the flow property
implies
\begin{align*}
  \bigl(\varphi_\delta^b\bigr)^{-1}
  =
  \varphi_{-\delta}^b.
\end{align*}
Thus $\varphi_\delta^b$ is a $C^q$-diffeomorphism of $M$.
By Lemma~\ref{lem:rate-interval},
\[
  \xi_c^{\mathcal E} = \zeta_{c\tau}^b = \varphi_{-c\tau}^b.
\]
Hence $\xi_c^{\mathcal E}$ is also a
$C^q$-diffeomorphism.
\end{proof}

We now turn to the analytic counterpart of this geometric fact: 
the invariance of the phase-space integral 
under the exponential evolution operator generated by the transport equation associated with \(b\). 
This invariance will subsequently be combined with an exponential-pullback identity 
to establish volume preservation.

\begin{lemma}[Integral invariance of the exponential operator]
\label{lem:exponential-integral-invariance}
Let
\begin{align*}
  C_{\mathcal E}g:=-\mathcal L_b g=-b\cdot\nabla_{x,v}g.
\end{align*}
Denote by $\exp(\delta C_{\mathcal E})$ the exact evolution
operator associated with the transport equation
\begin{align*}
  \partial_{\delta} u=C_{\mathcal E}u.
\end{align*}
Then, for every $g\in C_c^\infty(M)$ and
$\delta\in\mathbb R$,
\begin{align}
  \int_M\exp(\delta C_{\mathcal E})g(z)\,dz
  =
  \int_Mg(z)\,dz.
  \label{eq:exponential-integral-invariance}
\end{align}
\end{lemma}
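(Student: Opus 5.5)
The plan is to differentiate the integral in $\delta$ and show the derivative vanishes, using that $b$ is divergence-free. By Corollary~\ref{prop:frozen-pullback} (equivalently Proposition~\ref{prop:autonomous-exponential-pullback}, applicable because $b$ is complete by Lemma~\ref{lem:frozen-diffeomorphism}), we have $\exp(\delta C_{\mathcal E})g=g\circ\varphi^b_{-\delta}$. Set $u(\delta):=g\circ\varphi^b_{-\delta}$ and $I(\delta):=\int_M u(\delta,z)\,dz$.

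First we check that $I(\delta)$ is well defined and differentiable. Since $g$ has compact support $S$ and $\varphi^b_{-\delta}$ is a homeomorphism, the support of $u(\delta)$ is $\varphi^b_{\delta}(S)$. For $\delta$ in a bounded interval $[-T_0,T_0]$ these sets lie in a fixed compact set $K_0$: the velocity grows at most linearly, because $|\dot V|\le\|\mathcal E\|_\infty$, and the $x$-component lives on the compact torus. The map $(\delta,z)\mapsto u(\delta,z)$ is $C^1$, since $g\in C_c^\infty$ and the flow is $C^q$ with $q\ge1$. Hence we may differentiate under the integral sign:
\[
I'(\delta)=\int_M\partial_\delta u\,dz=-\int_M b\cdot\nabla_{x,v}u\,dz .
\]

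Next we integrate by parts. Because $b=(v,-\mathcal E(x))$, its divergence is $\nabla_{x,v}\cdot b=\nabla_x\cdot v-\nabla_v\cdot\mathcal E(x)=0$. Therefore $b\cdot\nabla u=\nabla\cdot(ub)$, and $ub$ is a compactly supported $C^1$ vector field on $\mathbb T^d\times\mathbb R^d$. The divergence theorem on this flat manifold without boundary, with periodic directions in $x$ and compact support in $v$, gives $\int_M\nabla\cdot(ub)\,dz=0$. So $I'(\delta)=0$ for all $\delta$, and $I(\delta)=I(0)=\int_M g\,dz$, which is \eqref{eq:exponential-integral-invariance}.

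The main obstacle is justifying differentiation under the integral and the uniform compact support. I would handle this with the explicit bound $|V(\sigma)-V(0)|\le\|\mathcal E\|_\infty|\sigma|$ together with the continuity of $\partial_\delta u$ on $[-T_0,T_0]\times K_0$. An alternative route avoids differentiating in $\delta$. One changes variables $w=\varphi^b_{-\delta}(z)$ and uses Liouville's formula: $\partial_\delta\det D\varphi^b_\delta=(\nabla\cdot b)\circ\varphi^b_\delta\,\det D\varphi^b_\delta=0$, so $\det D\varphi^b_\delta\equiv1$. I would keep this argument as a fallback, but I expect the Lie-derivative integration by parts to be the intended route, since the paper later derives volume preservation from this lemma.
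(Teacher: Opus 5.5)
Your proposal is correct and follows essentially the same route as the paper: differentiate $\int_M u(\delta,z)\,dz$ in $\delta$, justify this with the linear-in-$|\delta|$ bound on the velocity support coming from $|\dot V|\le\|\mathcal E\|_\infty$, and then integrate by parts using $\nabla_{x,v}\cdot b=0$ together with periodicity in $x$ and compact support in $v$. Your explicit identification $u(\delta)=g\circ\varphi^b_{-\delta}$ via the pullback identity simply spells out the regularity and support claims that the paper states more briefly.
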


\begin{proof}
Fix $g\in C_c^\infty(M)$ and set
\begin{align*}
  u(\delta,z):=\exp(\delta C_{\mathcal E})g(z).
\end{align*}
By definition of the evolution operator,
\begin{align*}
  \partial_{\delta}u
  =
  C_{\mathcal E}u
  =
  -b\cdot\nabla_{x,v}u,
  \qquad
  u(0,z)=g(z).
\end{align*}

If $g$ is supported in $|v|\leq R$, then $u(\delta)$ is supported
in $|v|\leq R+|\delta|\|\mathcal E\|_\infty$, since
$|\dot V|\leq\|\mathcal E\|_\infty$ along the characteristics of $b$.
This justifies differentiation under the integral and integration by
parts below. Using the transport equation, we obtain
\begin{align*}
  \frac{d}{d\delta}\int_Mu(\delta,z)\,dz
  &=
  -\int_M
  b(z)\cdot\nabla_{x,v}u(\delta,z)\,dz\\
  &=
  \int_M
  (\nabla_{x,v}\cdot b)(z)u(\delta,z)\,dz,
\end{align*}
where integration by parts is justified by
periodicity in $x$ and compact support in $v$.

For the present phase-space vector field,
\begin{align*}
  \nabla_{x,v}\cdot b(x,v)
  =
  \nabla_x\cdot v
  +
  \nabla_v\cdot \left(-\mathcal E(x)\right)
  =0.
\end{align*}
Consequently,
\begin{align*}
  \frac{d}{d\delta}\int_Mu(\delta,z)\,dz=0.
\end{align*}
Since $u(0,z)=g(z)$, integration in $\delta$ yields
\begin{align*}
  \int_M\exp(\delta C_{\mathcal E})g(z)\,dz
  =
  \int_Mg(z)\,dz.
\end{align*}
\end{proof}

With the integral invariance of $\exp(\delta C_{\mathcal E})$, 
we use the frozen exponential-pullback identity (Corollary~\ref{prop:frozen-pullback}) 
to obtain volume preservation of the exact frozen-field flow.
\begin{theorem}[Volume preservation of exact frozen-field flows]
\label{thm:frozen-flow-volume-preservation}
Under the assumptions of
Lemma~\ref{lem:frozen-diffeomorphism},
the exact backward characteristic flow
$\zeta_\delta^b$ preserves
phase-space volume for every $\delta\in\mathbb R$.
More precisely,
\begin{align}
  \int_M
  g\bigl(\zeta_\delta^b(z)\bigr)\,dz
  =
  \int_Mg(z)\,dz
  \label{eq:frozen-flow-volume-invariance}
\end{align}
for every $g\in C_c^\infty(M)$. In particular, $\delta= c\tau$ gives that $\xi_c^{\mathcal E}$ is volume-preserving.
\end{theorem}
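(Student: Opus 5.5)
The plan is to combine the frozen exponential--pullback identity with the integral invariance just established. Fix $g\in C_c^\infty(M)$ and $\delta\in\mathbb R$. By Lemma~\ref{lem:frozen-diffeomorphism}, $b=b^{\mathcal E}$ is a globally Lipschitz $C^q$ field with complete flow. Corollary~\ref{prop:frozen-pullback}, equivalently Proposition~\ref{prop:autonomous-exponential-pullback} applied to $b$ with $C_{\mathcal E}=-\mathcal L_b$, therefore gives
\[
  \exp(\delta C_{\mathcal E})g=g\circ\varphi_{-\delta}^b=g\circ\zeta_\delta^b
\]
pointwise on $M$. Integrating this identity over $M$ and invoking Lemma~\ref{lem:exponential-integral-invariance} yields $\int_M g(\zeta_\delta^b(z))\,dz=\int_M\exp(\delta C_{\mathcal E})g\,dz=\int_M g\,dz$, which is \eqref{eq:frozen-flow-volume-invariance}.

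The one point requiring care is that the two lemmas refer to the same object. In Lemma~\ref{lem:exponential-integral-invariance} the operator $\exp(\delta C_{\mathcal E})g$ is taken to be the classical solution of $\partial_\delta u=-b\cdot\nabla u$ with $u(0)=g$. In Proposition~\ref{prop:autonomous-exponential-pullback} the same operator is identified with $g\circ\varphi_{-\delta}^b$, using uniqueness of classical solutions of the linear transport problem. Since $g\in C_c^\infty$ and the flow is $C^q$ with $q\ge1$, the function $g\circ\varphi^b_{-\delta}$ is $C^1$ in $(\delta,z)$. Differentiating the flow identity shows it is a classical solution, so the identification is legitimate. Its support lies in $\{|v|\le R+|\delta|\|\mathcal E\|_\infty\}$, which justifies the integrals; this is the support argument already used in the lemma. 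I expect this consistency check, rather than any computation, to be the only real obstacle.

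For the final assertion, Lemma~\ref{lem:rate-interval} gives $\xi_c^{\mathcal E}=\zeta_{c\tau}^b$. Taking $\delta=c\tau$ in \eqref{eq:frozen-flow-volume-invariance} then shows that $\xi_c^{\mathcal E}$ preserves $\int_M g\,dz$ for all $g\in C_c^\infty(M)$. The theorem asks for exactly this test-function form of volume preservation.

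If one also wants the measure-theoretic statement that $\operatorname{Leb}((\zeta_\delta^b)^{-1}(A))=\operatorname{Leb}(A)$, the route is to approximate indicator functions of bounded open sets monotonically by test functions. An alternative is to use the change-of-variables formula together with the diffeomorphism property from Lemma~\ref{lem:frozen-diffeomorphism}, which gives $\int_M g\,(|\det D\zeta_\delta^b|^{-1}\circ(\zeta^b_\delta)^{-1})\,dz=\int_M g\,dz$ for all test $g$, and hence $|\det D\zeta_\delta^b|\equiv1$. Continuity in $\delta$ starting from the identity map then fixes the sign, so $\det D\zeta_\delta^b\equiv1$.
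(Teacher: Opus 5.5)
Your proposal is correct and follows the paper's proof. Both combine the frozen exponential--pullback identity $\exp(\delta C_{\mathcal E})g=g\circ\zeta_\delta^b$ with the integral invariance of Lemma~\ref{lem:exponential-integral-invariance}, then specialize to $\delta=c\tau$ via Lemma~\ref{lem:rate-interval}. Your consistency check that both results concern the same classical transport solution, and your Jacobian-sign argument (which the paper defers to Corollary~\ref{cor:rkei-volume-preservation}), are sound refinements rather than a different route.
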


\begin{proof}
Combining the frozen exponential-pullback identity 
\begin{align*}
  \exp(\delta C_{\mathcal E})g
  =
  g\circ\zeta_{\delta}^b,
\end{align*}
with Lemma~\ref{lem:exponential-integral-invariance} gives
\begin{align*}
  \int_M
  g\bigl(\zeta_\delta^b(z)\bigr)\,dz
  =
  \int_M\exp(\delta C_{\mathcal E})g(z)\,dz
  =
  \int_Mg(z)\,dz.
\end{align*}

As this equality holds for every
$g\in C_c^\infty(M)$, the pushforward of the phase-space Lebesgue measure under
$\zeta_\delta^b$ coincides with the original measure.
Thus $\xi_c^{\mathcal E} = \zeta_{c \tau}^b$ is volume-preserving.
\end{proof}

Theorem~\ref{thm:frozen-flow-volume-preservation} establishes volume preservation for a single exact frozen-field flow \(\xi_c^{\mathcal E}\). 
Since the stage maps \(\Xi_r\), the final map \(\Psi^{n\tau}_{(n+1)\tau}\), and the history map \(\Psi^0_{n\tau}\) are all built from finitely many such elementary factors composed together, 
it remains to show that volume preservation, together with the associated Jacobian determinant, survives finite composition. 
This is the content of the following corollary.
\begin{corollary}[Volume preservation and Jacobian determinants of composed flows]
\label{cor:rkei-volume-preservation}
Assume that, at every time step, all accepted fields $E^n$ and
stage fields $E^{(k-1)}$, $k=1,\ldots,s$, in the exact frozen-flow
construction of Section~\ref{sec:SC-NuF:VP} belong to
$C^q(\mathbb T^d;\mathbb R^d)$.
For the appropriate field
$\mathcal E\in\{\bar E_{r,j},\bar E^\beta_j\}$
and rate
$c\in\{a^\alpha_{r,j},a^\beta_j\}$, 
every elementary factor
$\xi_c^{\mathcal E}$, every stage map
$\Xi_r$, and the final map
$\Psi^{n\tau}_{(n+1)\tau}$ is a volume-preserving
$C^q$-diffeomorphism of $M$.
In particular,
\begin{align*}
  \det D_z\xi_c^{\mathcal E}(z)=1, \qquad
  \det D_z\Xi_r(z)&=1, \qquad
  \det D_z\Psi^{n\tau}_{(n+1)\tau}(z)=1
\end{align*}
for every $z\in M$.
Furthermore, the history flow
\begin{align*}
  \Psi^0_{n\tau}
  =
  \Psi^0_{\tau}\circ
  \Psi^{\tau}_{2\tau}\circ\cdots\circ
  \Psi^{(n-1)\tau}_{n\tau}
\end{align*}
is also a volume-preserving
$C^q$-diffeomorphism and satisfies
\begin{align*}
  \det D_z\Psi^0_{n\tau}(z)=1
  \qquad\text{for all }z\in M.
\end{align*}
\end{corollary}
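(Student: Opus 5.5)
We first reduce every map in the statement to a finite composition of elementary factors and then pass from measure preservation to the pointwise determinant identity. For each stage $r$ and factor $j$, the field $\bar E_{r,j}$ is a finite linear combination of the stage fields $E^{(k-1)}$, scaled by $(a^\alpha_{r,j})^{-1}$. By hypothesis these stage fields lie in $C^q(\mathbb T^d;\mathbb R^d)$, and this space is a vector space, so $\bar E_{r,j}\in C^q$. The same argument gives $\bar E^\beta_j\in C^q$. Lemma~\ref{lem:frozen-diffeomorphism} then shows that every elementary factor $\xi_c^{\mathcal E}$ is a $C^q$-diffeomorphism of $M$. Theorem~\ref{thm:frozen-flow-volume-preservation} shows that it satisfies $\int_M g\circ\xi_c^{\mathcal E}\,dz=\int_M g\,dz$ for all $g\in C_c^\infty(M)$.

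Next we pass to the Jacobian. For a $C^1$-diffeomorphism $\chi$ of $M$, the change-of-variables formula gives
\[
  \int_M g(\chi(z))\,dz=\int_M g(w)\,\bigl|\det D\chi^{-1}(w)\bigr|\,dw .
\]
Together with the volume invariance, this yields $\int_M g\,(|\det D\chi^{-1}|-1)\,dw=0$ for all $g\in C_c^\infty(M)$. Since $|\det D\chi^{-1}|-1$ is continuous, it vanishes identically, so $|\det D\chi^{-1}|\equiv1$ and hence $|\det D\chi|\equiv1$.

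To remove the absolute value, we use that $\xi_c^{\mathcal E}=\varphi^b_{-c\tau}$ lies on the one-parameter family $\delta\mapsto\varphi^b_\delta$. The map $\delta\mapsto\det D\varphi^b_\delta(z)$ is continuous, it is never zero, and it equals $1$ at $\delta=0$. It therefore stays positive, and so $\det D\xi_c^{\mathcal E}\equiv1$. Alternatively, Liouville's formula $\partial_\delta\det D\varphi_\delta=(\nabla\cdot b)\det D\varphi_\delta=0$ gives the same conclusion directly, and we would mention it as a cross-check.

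For the compositions $\Xi_r$, $\Psi^{n\tau}_{(n+1)\tau}$ and $\Psi^0_{n\tau}$, we argue as follows. A finite composition of $C^q$-diffeomorphisms of $M$ is again a $C^q$-diffeomorphism, and its inverse is the reversed composition of the inverses. The chain rule gives $\det D(\chi_1\circ\chi_2)(z)=\det D\chi_1(\chi_2(z))\,\det D\chi_2(z)=1$. Volume preservation of a composition also follows by applying the integral identity one factor at a time: $\int g\circ\chi_1\circ\chi_2=\int g\circ\chi_1=\int g$, where we use that $g\circ\chi_1$ is again compactly supported and continuous. Theorem~\ref{thm:frozen-flow-volume-preservation} was stated only for $g\in C_c^\infty$, so either we extend it to $C_c$ by density (uniform approximation with supports in a fixed compact set), or we simply use the determinant identity, which makes this point moot.

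The history map is handled by induction on $n$ through $\Psi^0_{(n+1)\tau}=\Psi^0_{n\tau}\circ\Psi^{n\tau}_{(n+1)\tau}$, starting from $\Psi^0_0=\mathrm{Id}$. The field hypothesis is needed at every step, and the statement assumes it.

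The main obstacle is the passage from the weak measure identity to the pointwise statement $\det=1$, in particular the sign of the determinant. We handle it by the continuity-in-$\delta$ argument above rather than relying on the measure statement alone.
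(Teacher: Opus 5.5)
Your proposal is correct and takes essentially the same route as the paper. It uses closure of $C^q$ under linear combinations, a change of variables that turns the integral invariance of Theorem~\ref{thm:frozen-flow-volume-preservation} into $|\det D\varphi^b_\delta|\equiv 1$, continuity in $\delta$ from $\varphi^b_0=\operatorname{Id}$ to fix the sign, and the chain rule plus induction for $\Xi_r$, $\Psi^{n\tau}_{(n+1)\tau}$ and $\Psi^0_{n\tau}$; your Liouville cross-check and the density remark are harmless additions the paper does not need.
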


\begin{proof}
We first establish the Jacobian determinant
of an exact frozen-field flow.
By Theorem~\ref{thm:frozen-flow-volume-preservation},
\begin{align*}
  \int_M
  g\bigl(\zeta_\delta^b(z)\bigr)\,dz
  =
  \int_Mg(z)\,dz
\end{align*}
for every $g\in C_c^\infty(M)$.

Since $\zeta_{\delta}^b$ is a
$C^q$-diffeomorphism,
\begin{align*}
  \left(\zeta_{\delta}^b\right)^{-1} = \zeta_{-\delta}^b = \varphi_{\delta}^b.
\end{align*}
Then the change of variables
$y=\zeta_{\delta}^b(z)$ gives
\begin{align*}
  \int_M
  g(y)
  \left|\det D_y\varphi_{\delta}^b(y)\right|
  \,dy
  =
  \int_Mg(y)\,dy.
\end{align*}
As this equality holds for every
$g\in C_c^\infty(M)$, we obtain
\begin{align*}
  \left|\det D_y\varphi_\delta^b(y)\right|
  =1
\end{align*}
almost everywhere in $M$.
By continuity of the Jacobian determinant,
the equality holds everywhere.

Moreover, $\varphi_0^b
=\operatorname{Id}$, and
$\delta\mapsto\det D_y\varphi_\delta^b(y)$
is continuous and never vanishes.
Hence the flow preserves orientation, and
\begin{align*}
  \det D_y\varphi_\delta^b(y)
  =1
\end{align*}
for all $\delta\in\mathbb R$ and $y\in M$.
In particular,
\begin{align*}
  \det D_z\xi_c^{\mathcal E}(z)
  =
  \det D_z\varphi_\delta^b(z)
  =
  1,\quad \delta=-c\tau.
\end{align*}

Next, the frozen fields $\bar E_{r,j}$
and $\bar E^\beta_j$ are finite linear
combinations of the accepted or stage fields
and therefore belong to
$C^q(\mathbb T^d;\mathbb R^d)$.
Consequently, every elementary factor is a volume-preserving $C^q$-diffeomorphism.
Every stage map $\Xi_r$ and the final map
$\Psi^{n\tau}_{(n+1)\tau}$ is a finite composition
of such elementary factors.
For a generic composition
\begin{align*}
  \Xi=\xi_1\circ\cdots\circ\xi_J,
\end{align*}
let $z_J=z$ and
$z_{j-1}=\xi_j(z_j)$ for $j=J,\ldots,1$.
The chain rule yields
\begin{align*}
  \det D_z\Xi(z)
  =
  \prod_{j=1}^{J}
  \det D\xi_j(z_j)
  =1.
\end{align*}
Since a finite composition of
$C^q$-diffeomorphisms is again a
$C^q$-diffeomorphism, every stage map
and the final map have the stated properties.

Finally, applying the same argument
recursively to
\begin{align*}
  \Psi^0_{n\tau}
  =
  \Psi^0_{(n-1)\tau}
  \circ
  \Psi^{(n-1)\tau}_{n\tau}
\end{align*}
shows that the history flow is also a
volume-preserving $C^q$-diffeomorphism,
with
\begin{align*}
  \det D_z\Psi^0_{n\tau}(z)=1.
\end{align*}
\end{proof}

If $f_0\geq0$, direct evaluation gives
$f_{\tau,h}^{n+1}=f_0\circ\Psi^0_{(n+1)\tau,h}\geq0$ whenever the
numerical history map is defined. This positivity statement requires
neither volume preservation nor invertibility, provided that the initial
data are evaluated without a reconstruction that introduces negative values.

A general explicit Runge--Kutta approximation of the exact frozen-field flow $\xi_c^{\mathcal E}$ need not preserve volume.

\section{Numerical Experiments}
\label{sec:Result}

The tests consider one spatial and one velocity dimension, 
the two-dimensional guiding-center model, and two spatial and two velocity dimensions.
We examine the numerical temporal convergence rates 
and compare them with the expected orders of accuracy. 
We also investigate the evolution of relevant macroscopic quantities 
to assess the conservation properties of the method. 
In addition, representative contour plots are presented 
to illustrate the evolution of the numerical solution 
in phase space for the Vlasov--Poisson system 
and in physical space for the guiding-center model.

We refer to the proposed construction as the self-consistent numerical flow (SC-NuF) method.
SC-NuF-CF1, SC-NuF-CF2 and SC-NuF-CF3G are used for comparison,
and the corresponding tableaux are given in
Appendix~\ref{app:low-order-tableaux} for completeness.

Throughout this section, the time step \(\tau\) is fixed a priori for
the duration of each simulation. Given a final time \(T\) and a
prescribed number of time steps \(N_{\mathrm{steps}}\), we set
\(\tau = T/N_{\mathrm{steps}}\), 
and this convention is used identically for all SC-NuF variants. 
To report the step size relative to the grid and initial field, 
we define an equivalent Courant--Friedrichs--Lewy (CFL) number.
For the 1D1V Vlasov--Poisson system, defined by
\begin{align}\label{eq:CFL}
\tau = \frac{\mathrm{CFL}}
  {\bigl(v_{\max}/\Delta x\bigr) + \bigl({|E^{0}|}_{\max}/\Delta v\bigr)},
\end{align}
where \(v_{\max}\) is the largest velocity resolved by the velocity
domain, and \(E^{0}_{\max}\) is the maximum magnitude of the electric
field at the initial time \(t = 0\);
for the two-dimensional guiding-center model, it is defined by
\begin{align}\label{eq:CFL-GC}
\tau = \frac{\mathrm{CFL}}
  {\bigl({|E_{x_2}^0|}_{\max}/\Delta x\bigr) + \bigl({|E_{x_1}^0|}_{\max}/\Delta y\bigr)}.
\end{align}

These tests cover weakly and strongly nonlinear regimes. 
The results are interpreted for the stated numbers of points and time intervals.

\subsection{1D1V Vlasov--Poisson system tests}
In this subsection, we monitor the total mass, momentum, and energy,
\begin{align}
    M(t)
    &:= \int_{M} f(x,v,t)\,\mathrm{d}x\,\mathrm{d}v,\\
    P(t)
    &:= \int_{M} v f(x,v,t)\,\mathrm{d}x\,\mathrm{d}v,\\
    \mathcal{H}(t)
    &:= \frac{1}{2}\int_{M} v^2 f(x,v,t)\,\mathrm{d}x\,\mathrm{d}v
    + \frac{1}{2}\int_{\mathbb T^d} |E(x,t)|^2\,\mathrm{d}x.
\end{align}
These quantities are invariants of the continuous Vlasov--Poisson system.
The $L^2$ norm of the electric field is given by
\begin{equation}
    \|E(t)\|_2
    := \left(\int_{\mathbb T^d} \frac{1}{2}|E(x,t)|^2\,\mathrm{d}x\right)^{1/2}.
\end{equation}
For the numerical solution, the conservation errors are measured by the
absolute deviations from the corresponding initial values,
\begin{equation}
    \Delta M(t) := |M(t)-M(0)|,\quad
    \Delta P(t) := |P(t)-P(0)|,\quad
    \Delta\mathcal{H}(t) := |\mathcal{H}(t)-\mathcal{H}(0)|.
\end{equation}

\begin{figure}[htbp]
    \centering
    \begin{subfigure}[b]{0.45\textwidth}
        \centering
        \includegraphics[width=\textwidth]{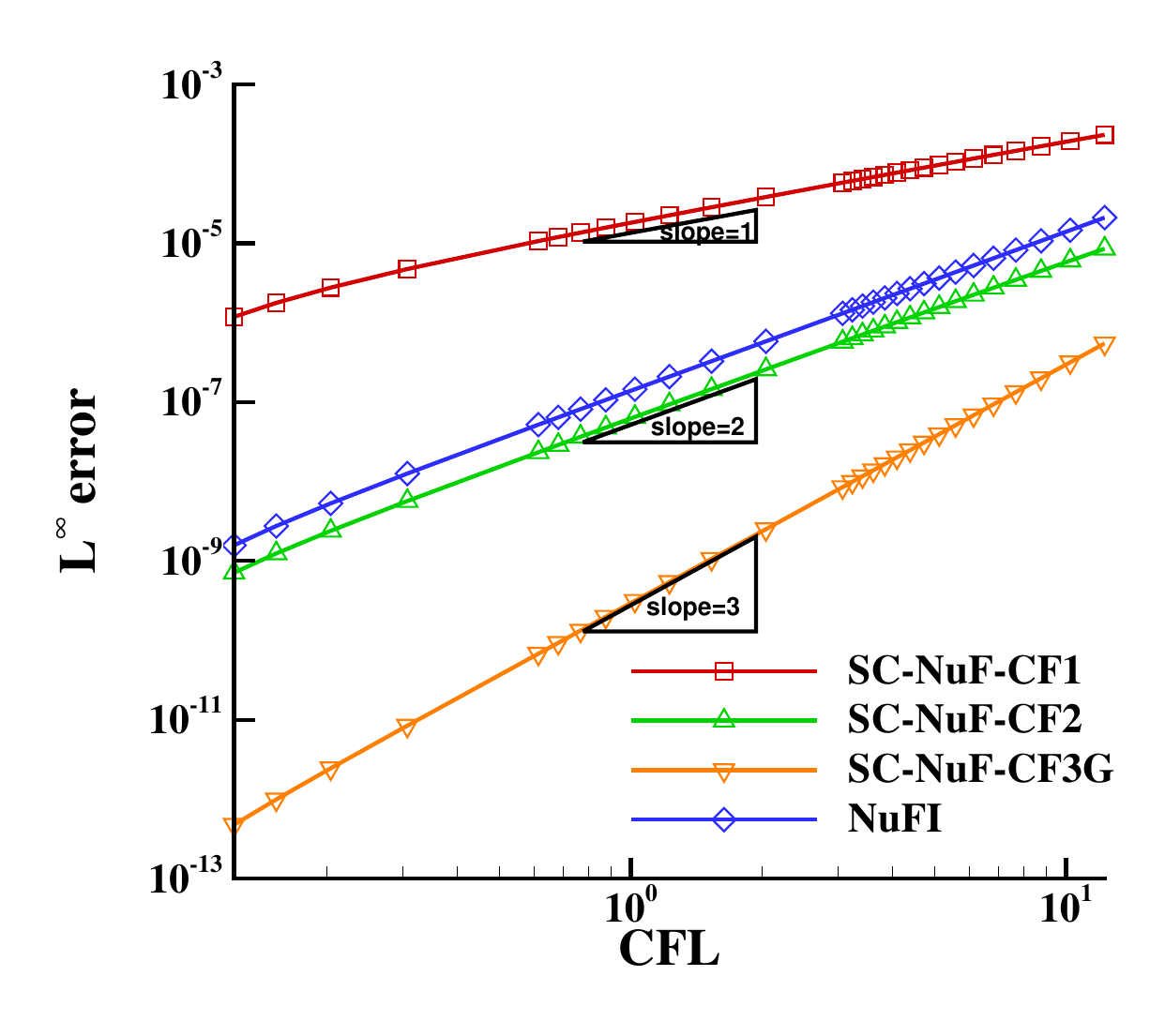}
        \caption{CFL v.s. $L^\infty$ error.}
        \label{fig:weak_landau_cfl_error}
    \end{subfigure}
    \hfill
    \begin{subfigure}[b]{0.45\textwidth}
        \centering
        \includegraphics[width=\textwidth]{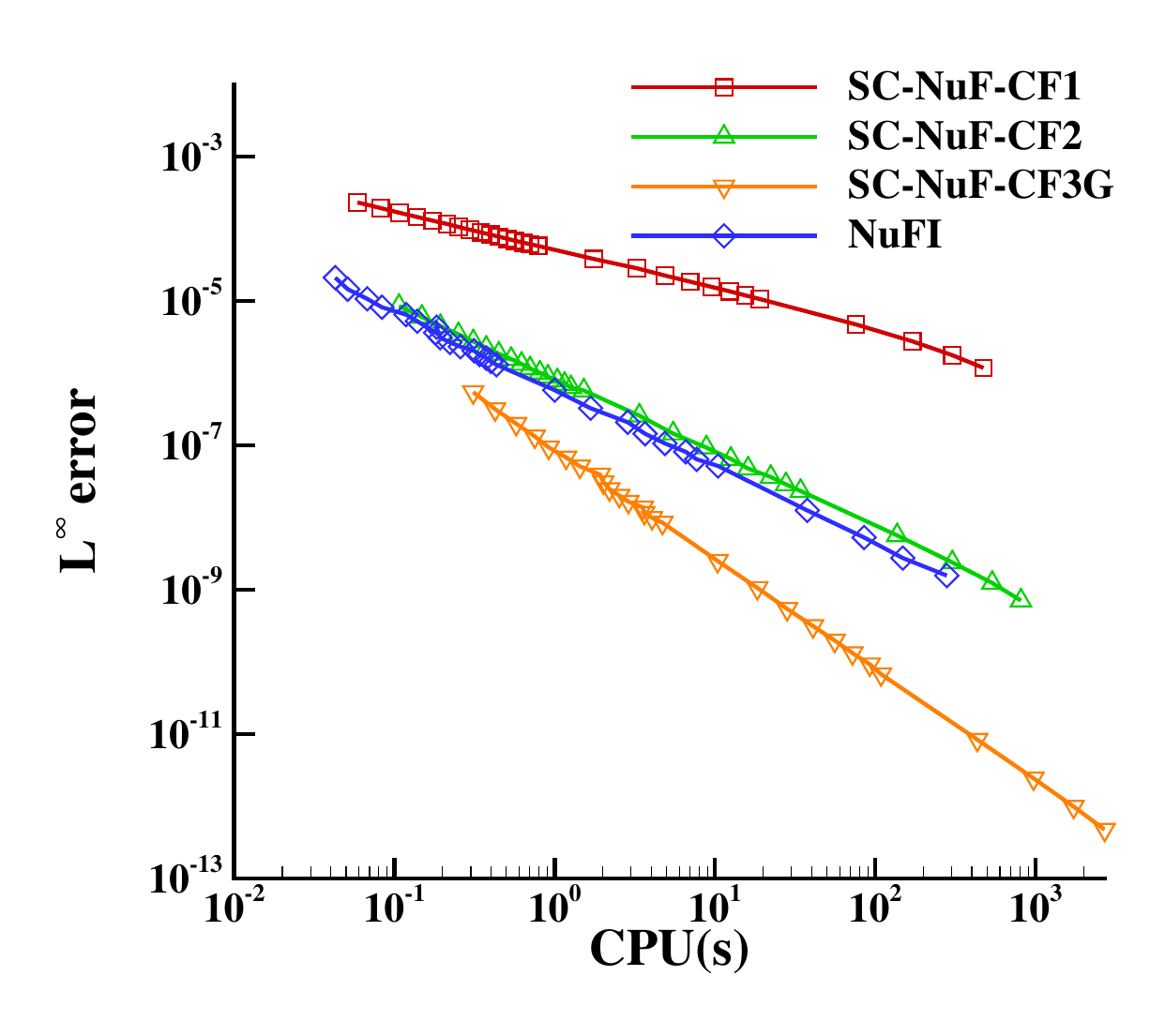}
        \caption{CPU times v.s. $L^\infty$ error.}
        \label{fig:weak_landau_CPU_error}
    \end{subfigure}

    \caption{Plots of $L^\infty$ error versus the CFL number (left) and error versus CPU times (right) for solving weak Landau damping at $T = 1$.
            Temporal self-convergence of SC-NuF coupled with CF-RKEI is assessed 
            with a reference solution from the corresponding scheme with $\tau=\frac{1}{1000}$.}
    \label{fig:weak_landau_error}
\end{figure}

\begin{figure}[htbp]
    \centering
    \begin{subfigure}[b]{0.48\textwidth}
        \centering
        \includegraphics[width=\linewidth]{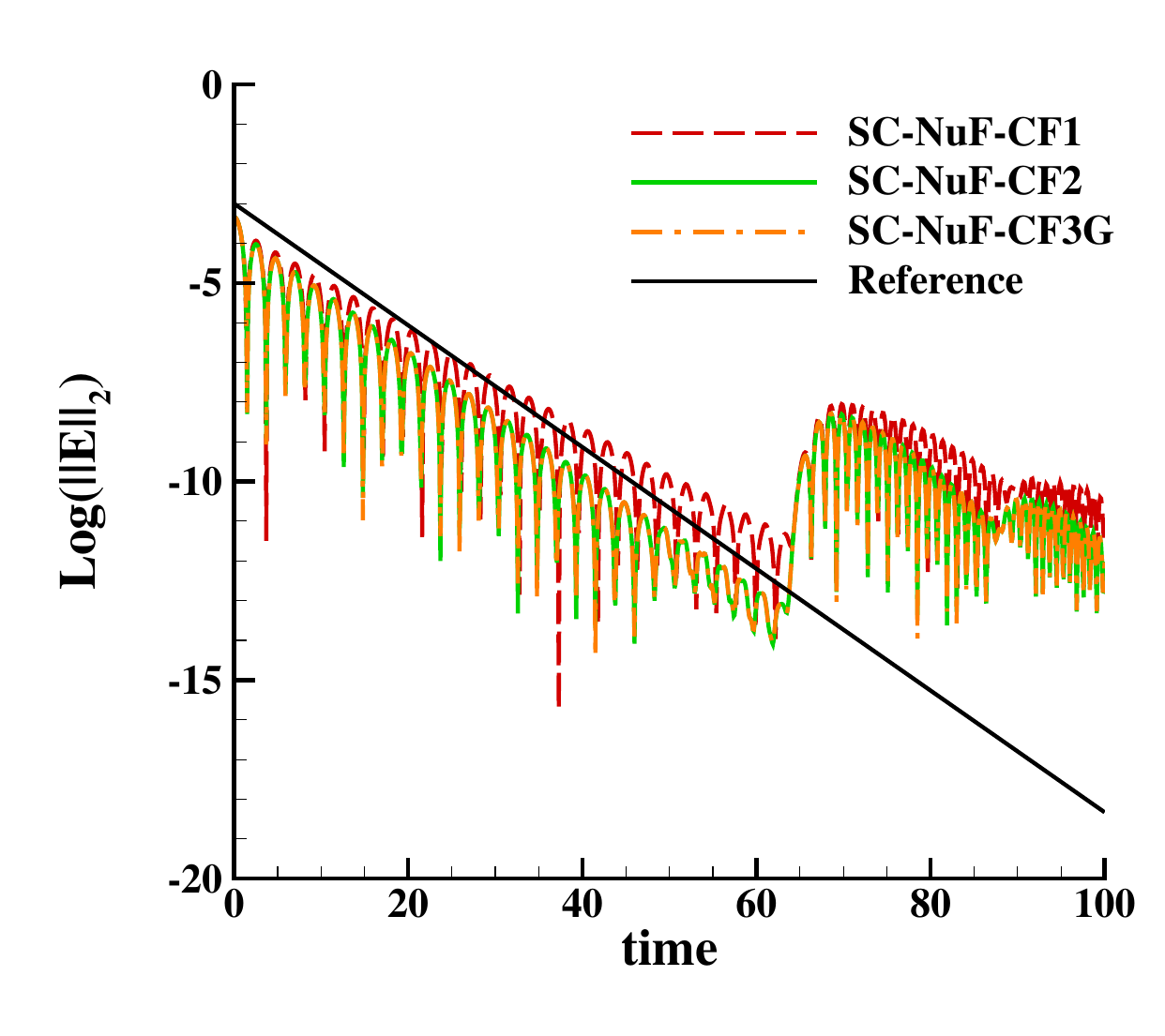}
    \end{subfigure}
    \hfill
    \begin{subfigure}[b]{0.48\textwidth}
        \centering
        \includegraphics[width=\linewidth]{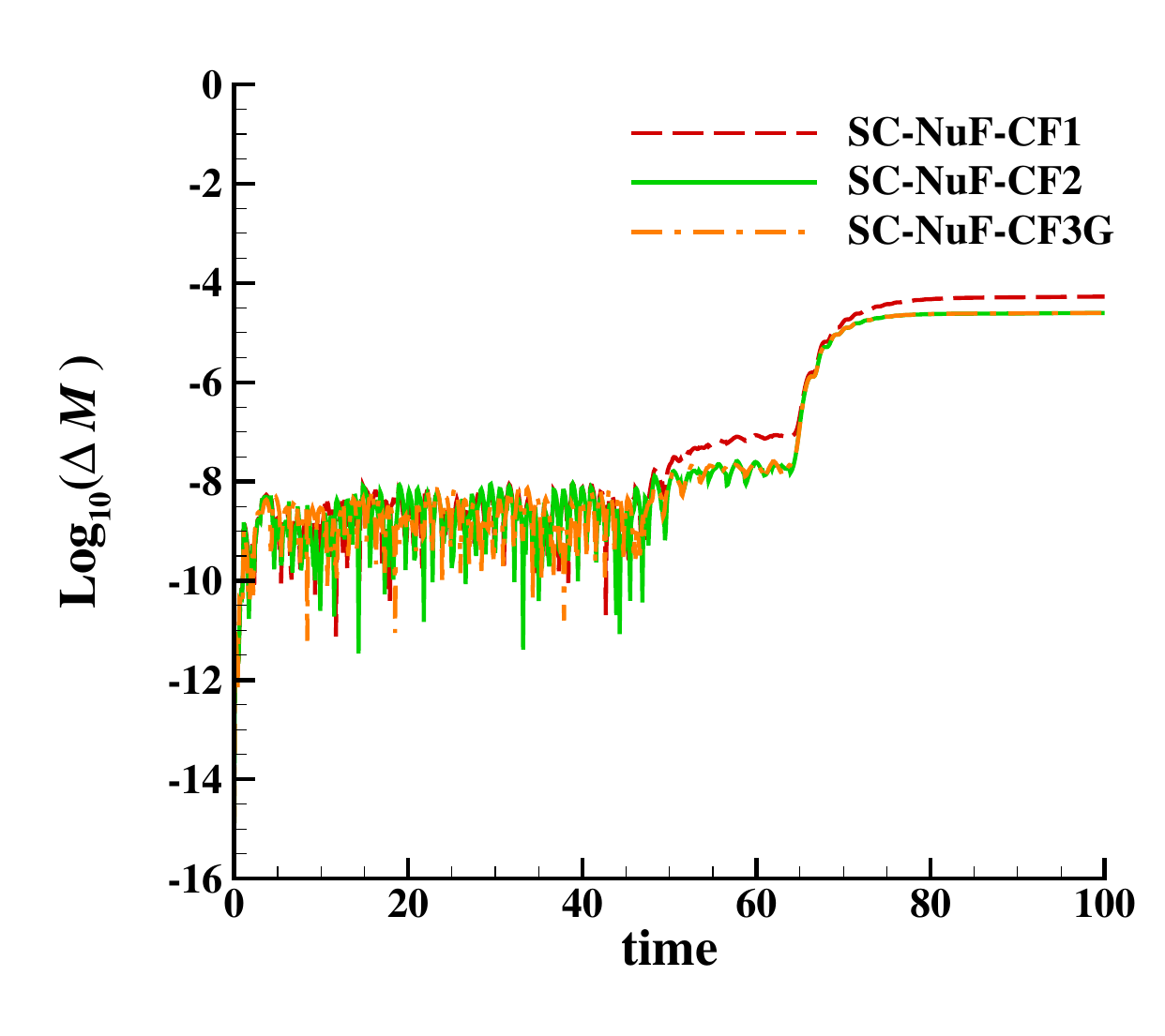}
    \end{subfigure} 
    
    \begin{subfigure}[b]{0.48\textwidth} 
        \centering
        \includegraphics[width=\linewidth]{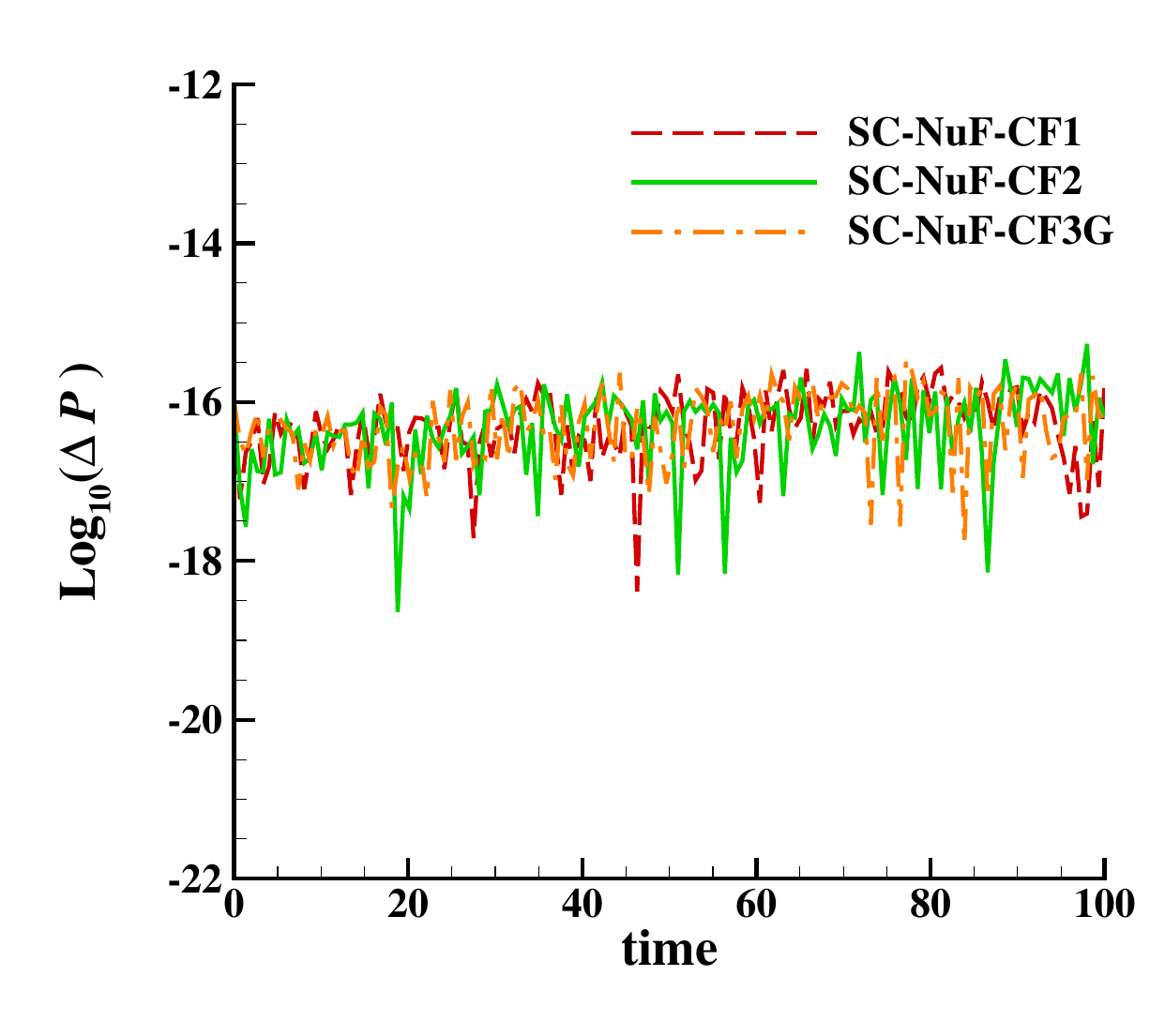}
  
    \end{subfigure}
    \hfill
    \begin{subfigure}[b]{0.48\textwidth} 
        \centering
        \includegraphics[width=\linewidth]{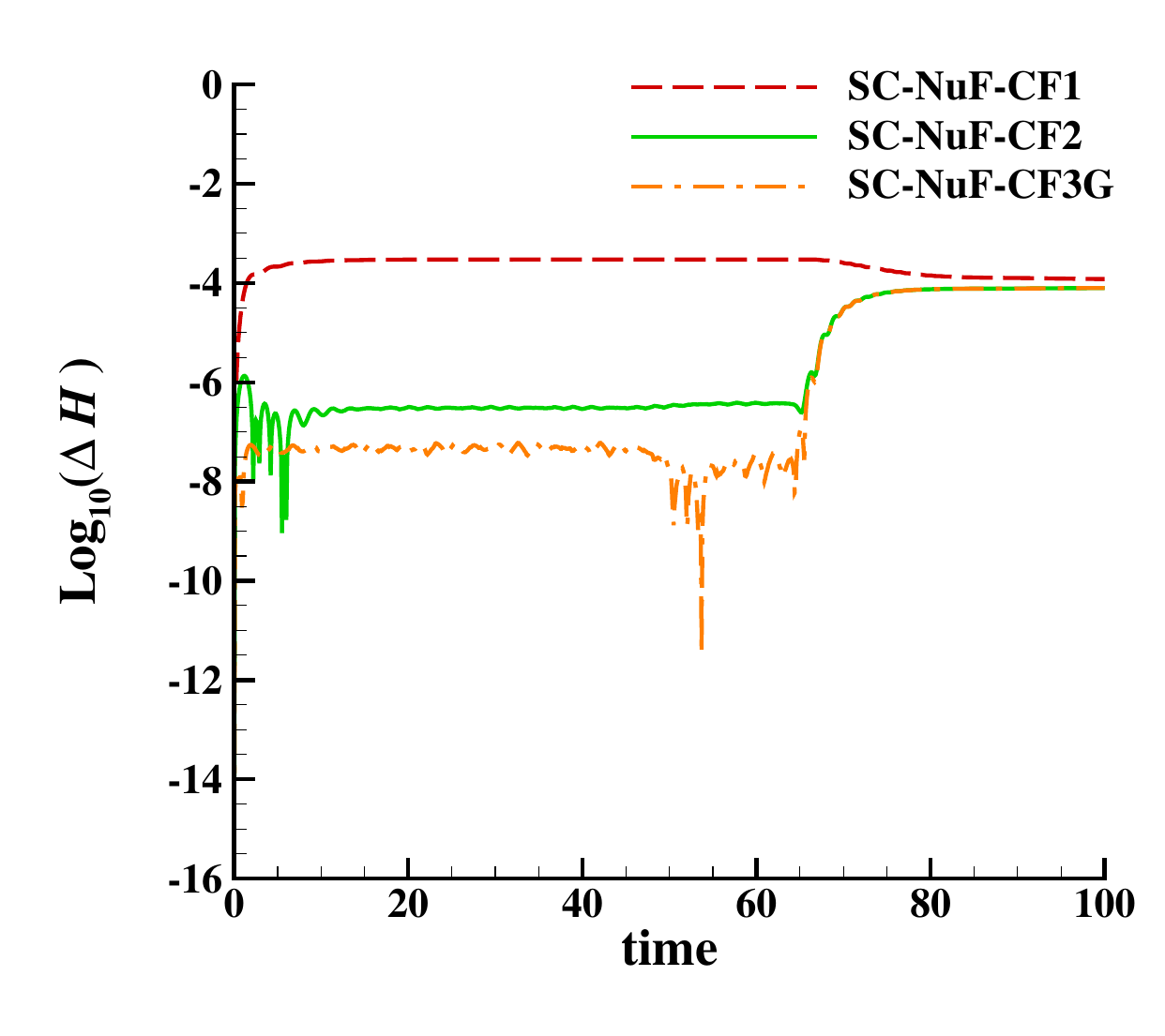}
    \end{subfigure}
    
    \caption{Weak Landau damping. Time evolution of $\log(||E||_2)$, $\log_{10}(\Delta M(t))$,
    $\log_{10}(\Delta P(t))$ and $\log_{10}(\Delta \mathcal{H}(t))$. 
    We use $128^2$ phase-space points, with $128$ points in each of
    the $x$ and $v$ directions, and set the time step to $\tau=0.1$.}
    \label{fig:weak_landau}
\end{figure}

\begin{example}[Weak Landau damping]\label{ex:weak-Landau}
We consider the initial condition
    \begin{align}\label{Landau}
        f_0(x,v) = \frac{1}{\sqrt{2\pi}}(1+\alpha\cos(kx))\exp(-\frac{v^2}{2}),
    \end{align}
where $\alpha=0.01$ and $k=0.5$. 
The phase space is $\Omega_x \times \Omega_v = [0,4\pi] \times [-6,6]$.

Using $128\times512$ phase-space quadrature points, we measure temporal
errors at $T=1$ against a reference solution computed for each scheme with
$\tau=\frac{1}{1000}$. Figure~\ref{fig:weak_landau_cfl_error} shows
$L^\infty$ errors consistent with the formal orders over the displayed range,
with NuFI \cite{KW2024} included for comparison;
Figure~\ref{fig:weak_landau_CPU_error} plots these errors against CPU time.

For the longer integration, we use $128^2$ phase-space points and
$\tau=0.1$ (CFL$\approx 6$). Figure~\ref{fig:weak_landau} shows the
electric-field history and conservation errors.
The decay envelopes of $\log (\|E(t)\|_2)$ are in good agreement with the reference line,
whose slope is $-0.1533$ \cite{FILBET2003}.
\end{example}

\begin{figure}[htbp]
    \centering
    \begin{subfigure}[b]{0.45\textwidth}
        \centering
        \includegraphics[width=\textwidth]{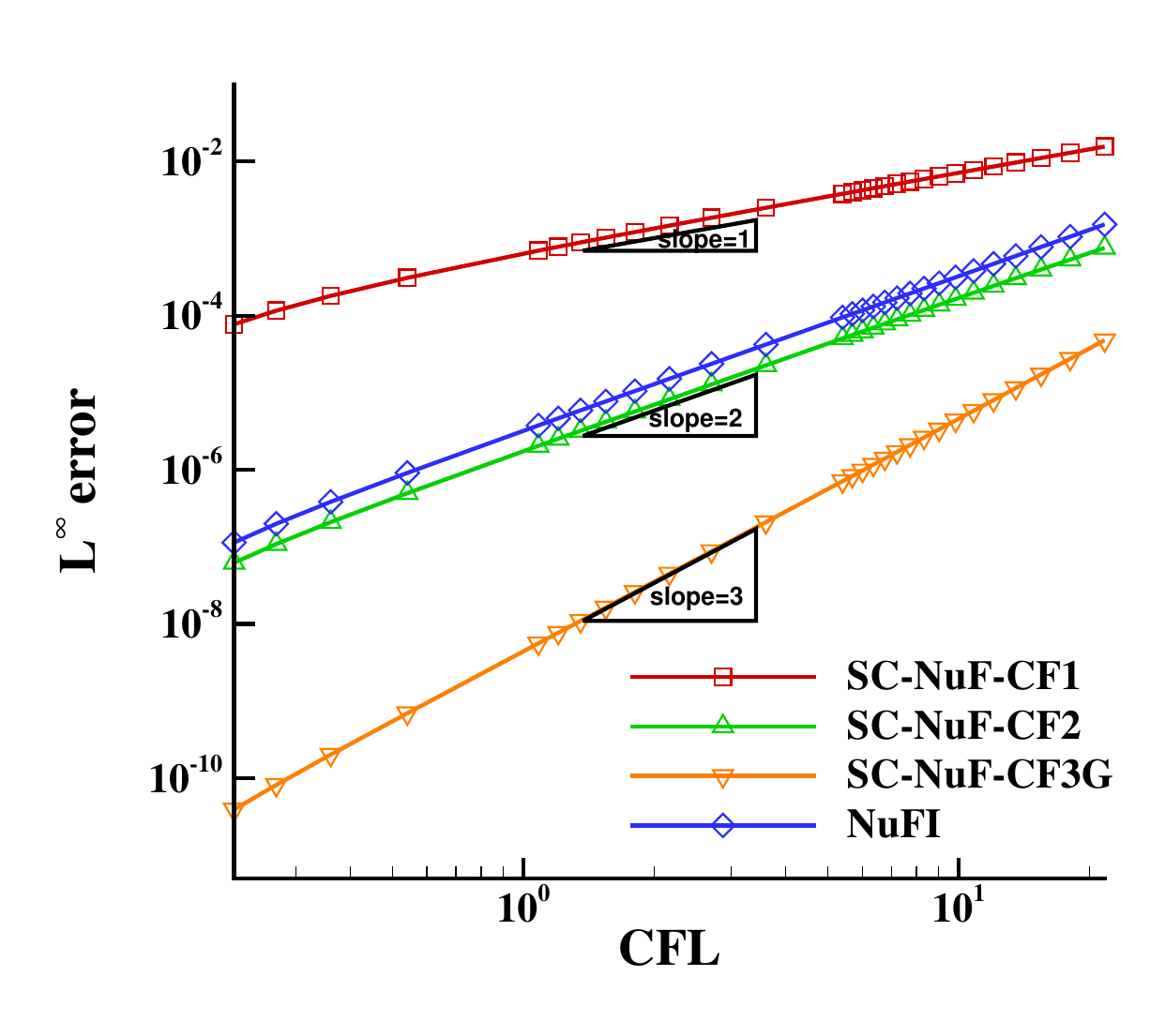}
        \caption{CFL v.s. $L^\infty$ error.}
        \label{fig:Strong_landau_cfl_error}
    \end{subfigure}
    \hfill
    \begin{subfigure}[b]{0.45\textwidth}
        \centering
        \includegraphics[width=\textwidth]{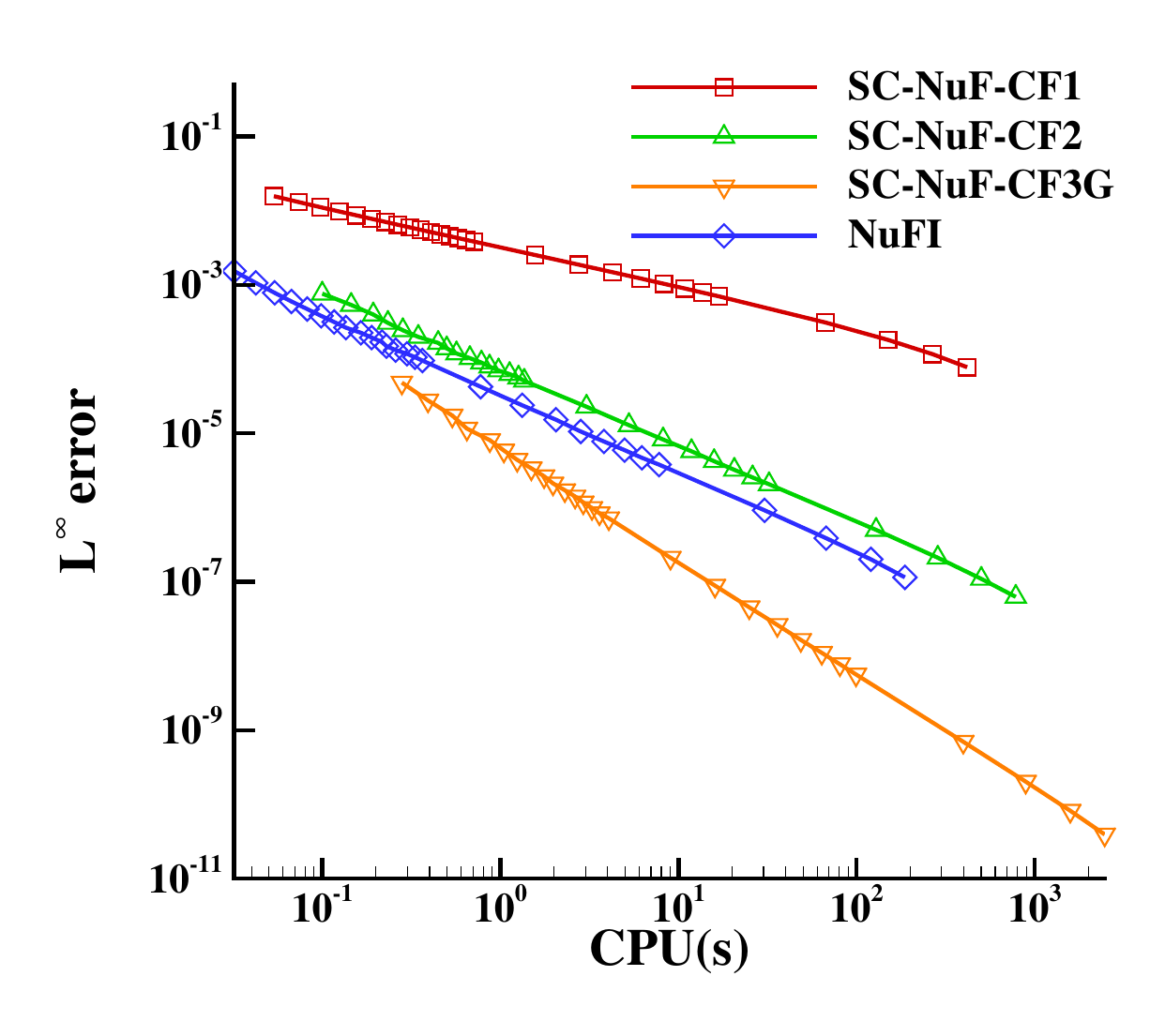}
        \caption{CPU times v.s. $L^\infty$ error.}
        \label{fig:Strong_landau_CPU_error}
    \end{subfigure}

    \caption{Plots of $L^\infty$ error versus the CFL number (left) and error versus CPU times (right) for solving strong Landau damping at $T = 1$.
            Temporal self-convergence of SC-NuF coupled with CF-RKEI is assessed 
            with a reference solution from the corresponding scheme with $\tau=\frac{1}{1000}$.}
    \label{fig:Strong_landau_error}
\end{figure}

\begin{figure}[htbp]
    \centering
    \begin{subfigure}[b]{0.48\textwidth}
        \centering
        \includegraphics[width=\linewidth]{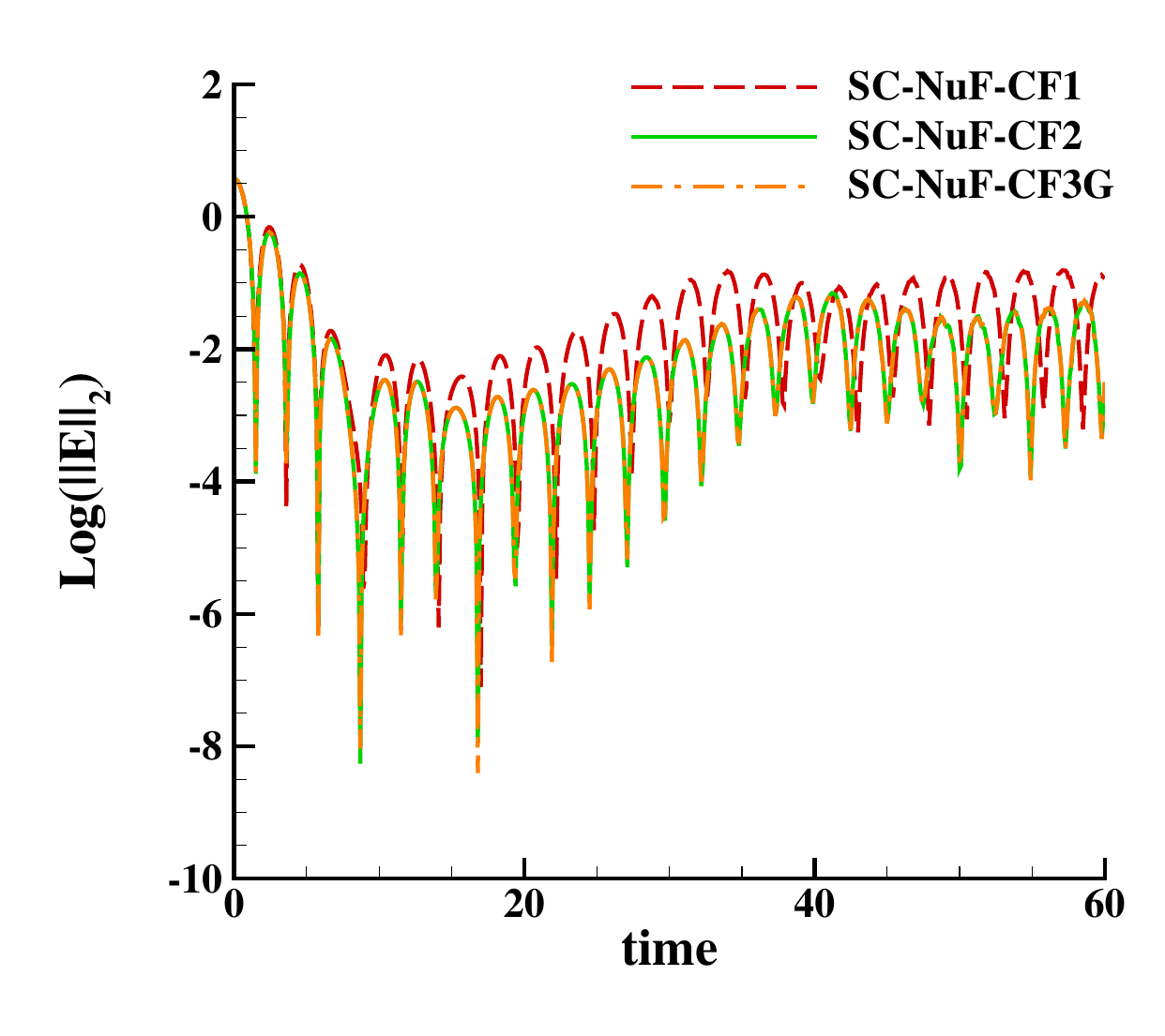}
    \end{subfigure}
    \hfill
    \begin{subfigure}[b]{0.48\textwidth}
        \centering
        \includegraphics[width=\linewidth]{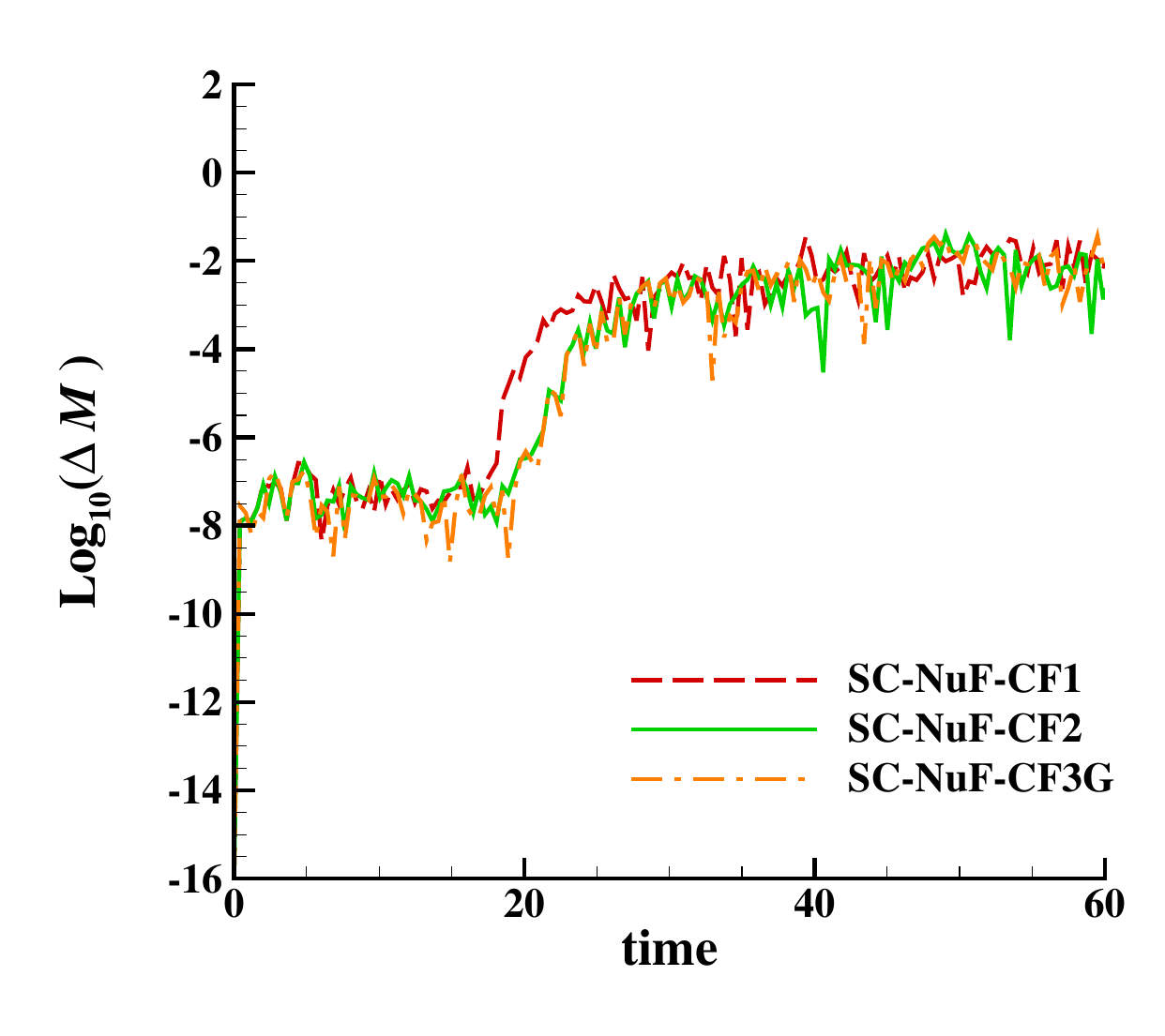}
    \end{subfigure}
    
    \begin{subfigure}[b]{0.48\textwidth} 
        \centering
        \includegraphics[width=\linewidth]{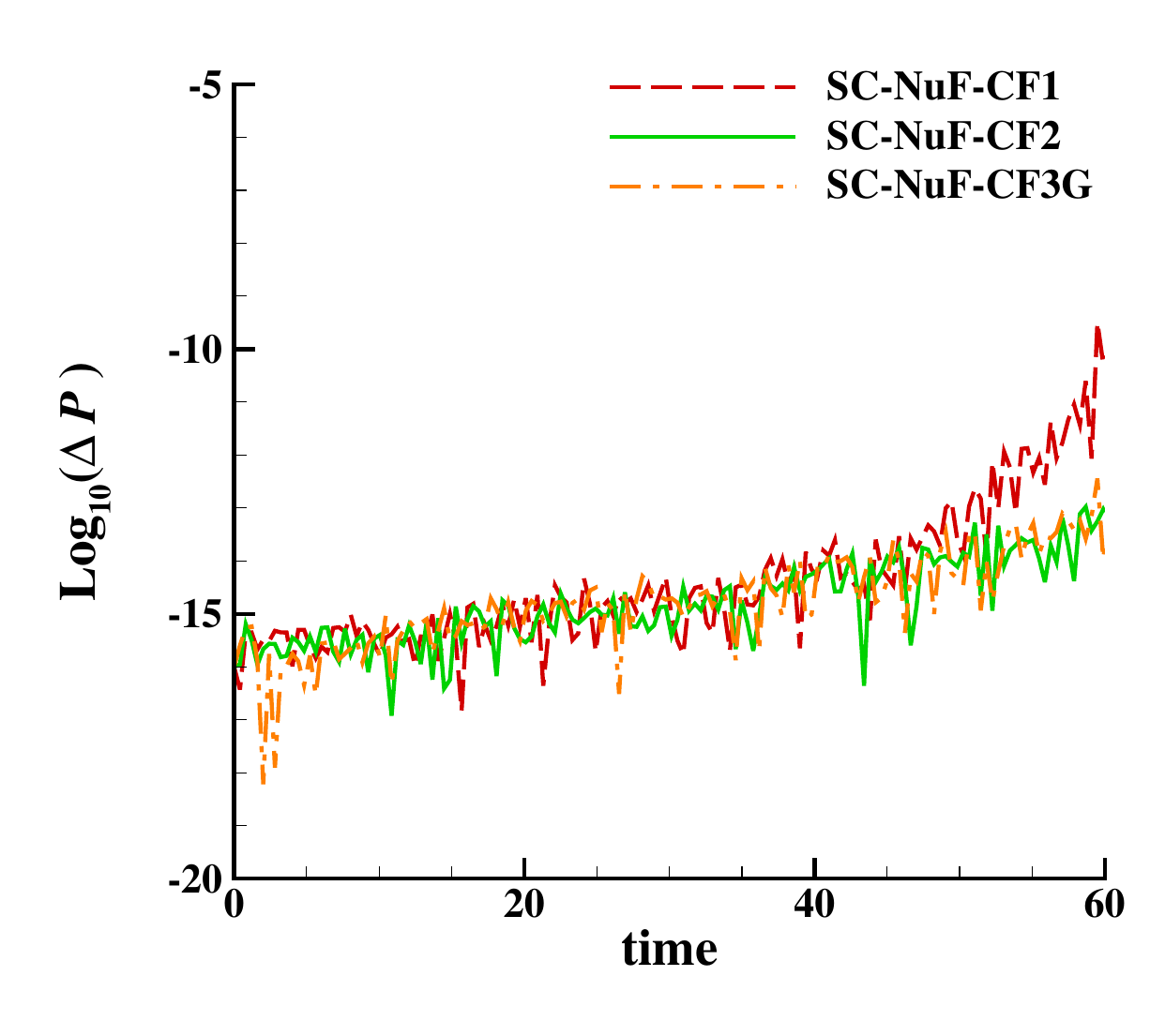}
    \end{subfigure}
    \hfill
    \begin{subfigure}[b]{0.48\textwidth} 
        \centering
        \includegraphics[width=\linewidth]{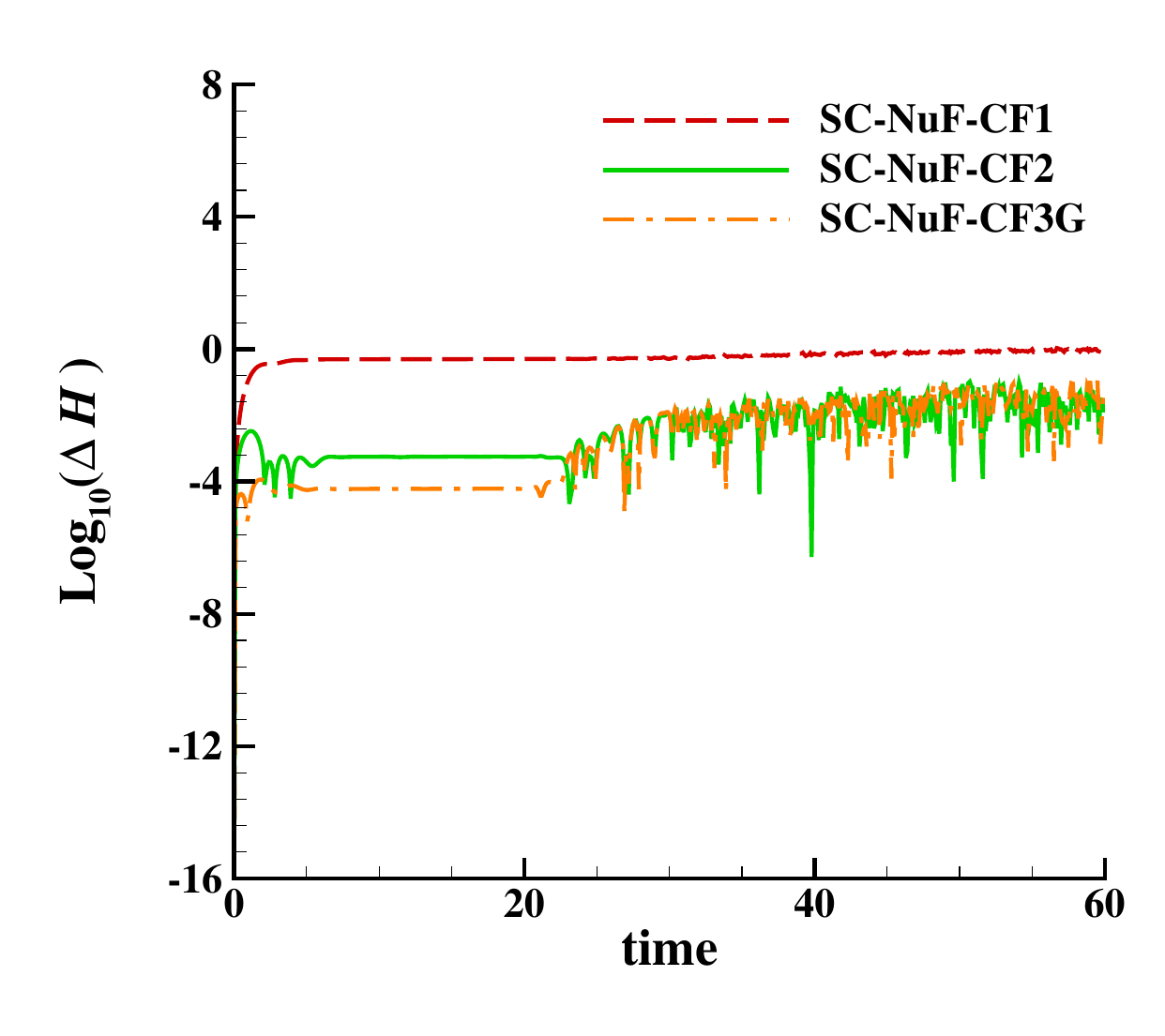}
    \end{subfigure}
    
    \caption{Strong Landau damping. Time evolution of $\log(||E||_2)$, $\log_{10}(\Delta M(t))$,
    $\log_{10}(\Delta P(t))$ and $\log_{10}(\Delta \mathcal{H}(t))$. 
    We use $128^2$ phase-space points, with $128$ points in each of
    the $x$ and $v$ directions, and set the time step to $\tau=0.1$.}
    \label{fig:strong_landau}
\end{figure}

\begin{example}[Strong Landau damping]\label{ex:strong_landau}
We set $\alpha=0.5$ in \eqref{Landau} to enter the strongly nonlinear
regime and use $\Omega_x\times\Omega_v=[0,4\pi]\times[-10,10]$.

With the same temporal-accuracy setup as in Example~\ref{ex:weak-Landau},
Figure~\ref{fig:Strong_landau_cfl_error} shows errors consistent with the
formal orders over the displayed range; Figure~\ref{fig:Strong_landau_CPU_error}
plots the errors against CPU time.

The longer integration uses $128^2$ phase-space points and $\tau=0.1$
(CFL$\approx10.83$). Figure~\ref{fig:strong_landau} shows the
electric-field history and conservation errors.

\end{example}

The mass, momentum, and total-energy diagnostics in
Figures~\ref{fig:weak_landau} and~\ref{fig:strong_landau} highlight
limitations of the fully discrete implementation. Achieving volume
preservation and conservation in SC-NuF requires compatible treatment of
velocity quadrature, electric-field computation and interpolation, and the
inner high-order Runge--Kutta integration of the frozen characteristic equations.
The present implementation does not yet fully retain these properties;
addressing this remains a direction for future work. The small momentum
errors may be due to the symmetry of the initial data, since less satisfactory
momentum conservation is observed in other tests; this also requires further analysis.

\begin{figure}[htbp]
    \centering
    \begin{subfigure}[b]{0.48\textwidth}
        \centering
        \includegraphics[width=\linewidth]{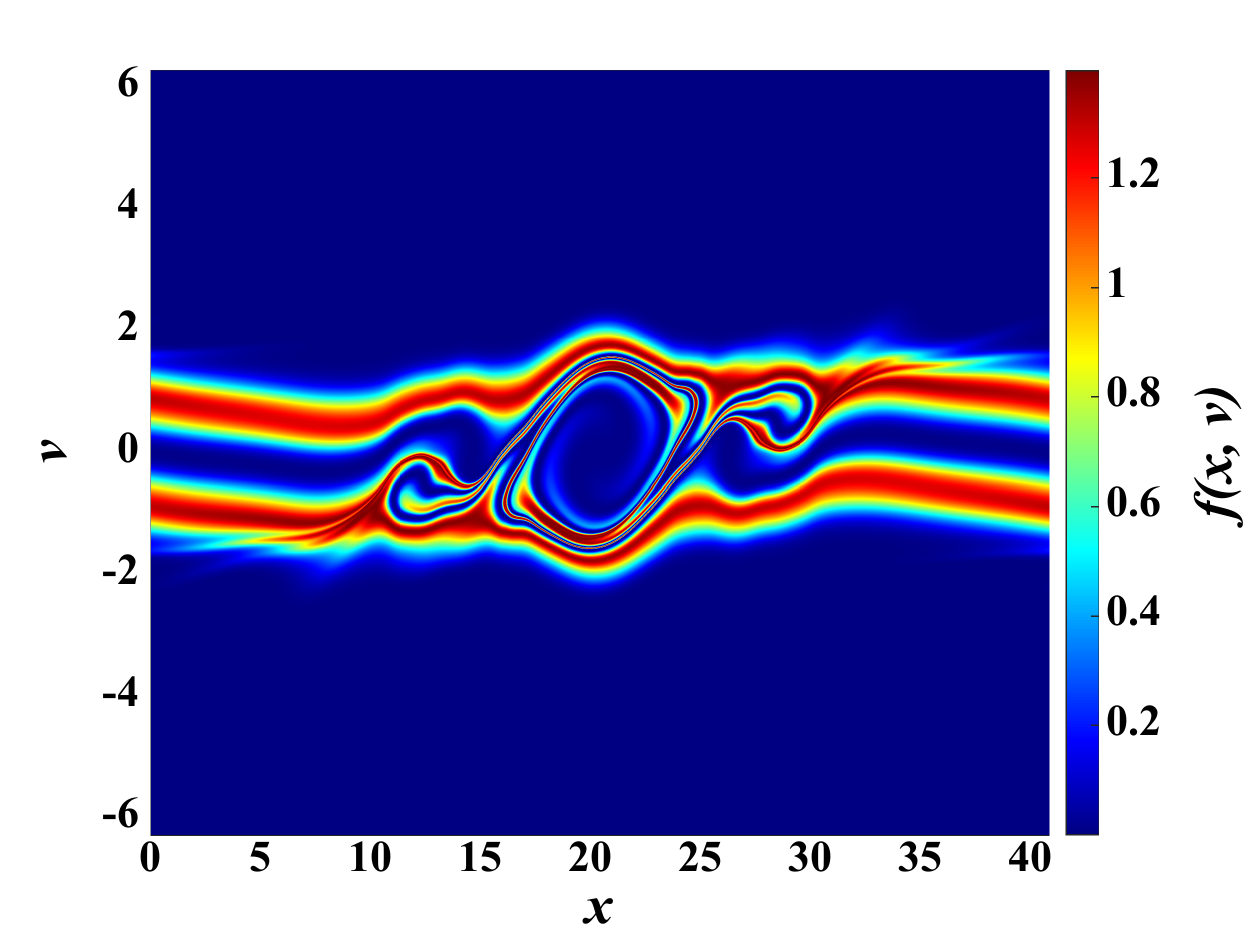}
        \caption{SC-NuF-CF2, $T=30$.}
        \label{fig:SLF-CF2_ex5_T30}
    \end{subfigure}
    \hfill
    \begin{subfigure}[b]{0.48\textwidth}
        \centering
        \includegraphics[width=\linewidth]{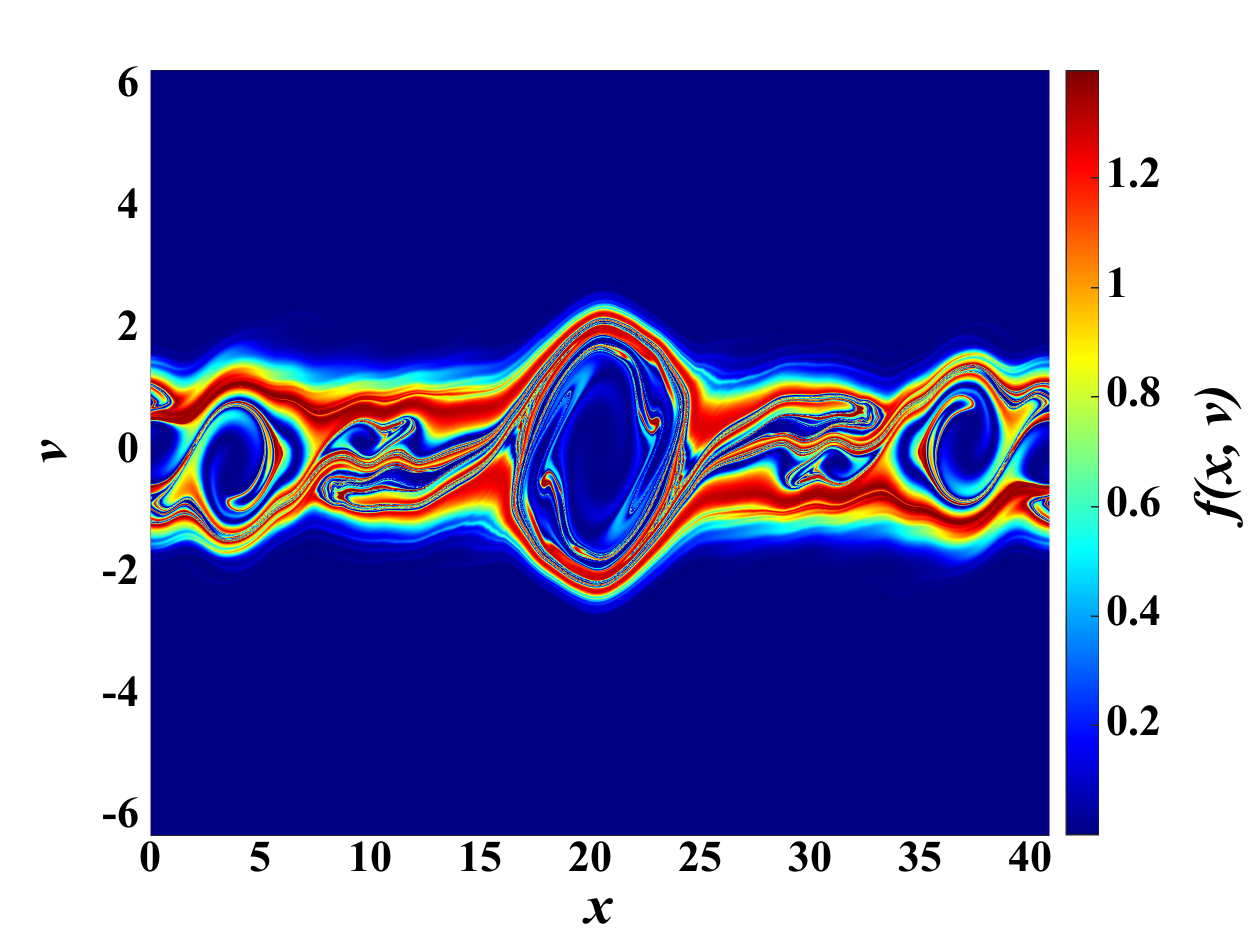}
        \caption{SC-NuF-CF2, $T=50$.}
        \label{fig:SLF-CF2_ex5_T50}
    \end{subfigure}

    \begin{subfigure}[b]{0.48\textwidth} 
        \centering
        \includegraphics[width=\linewidth]{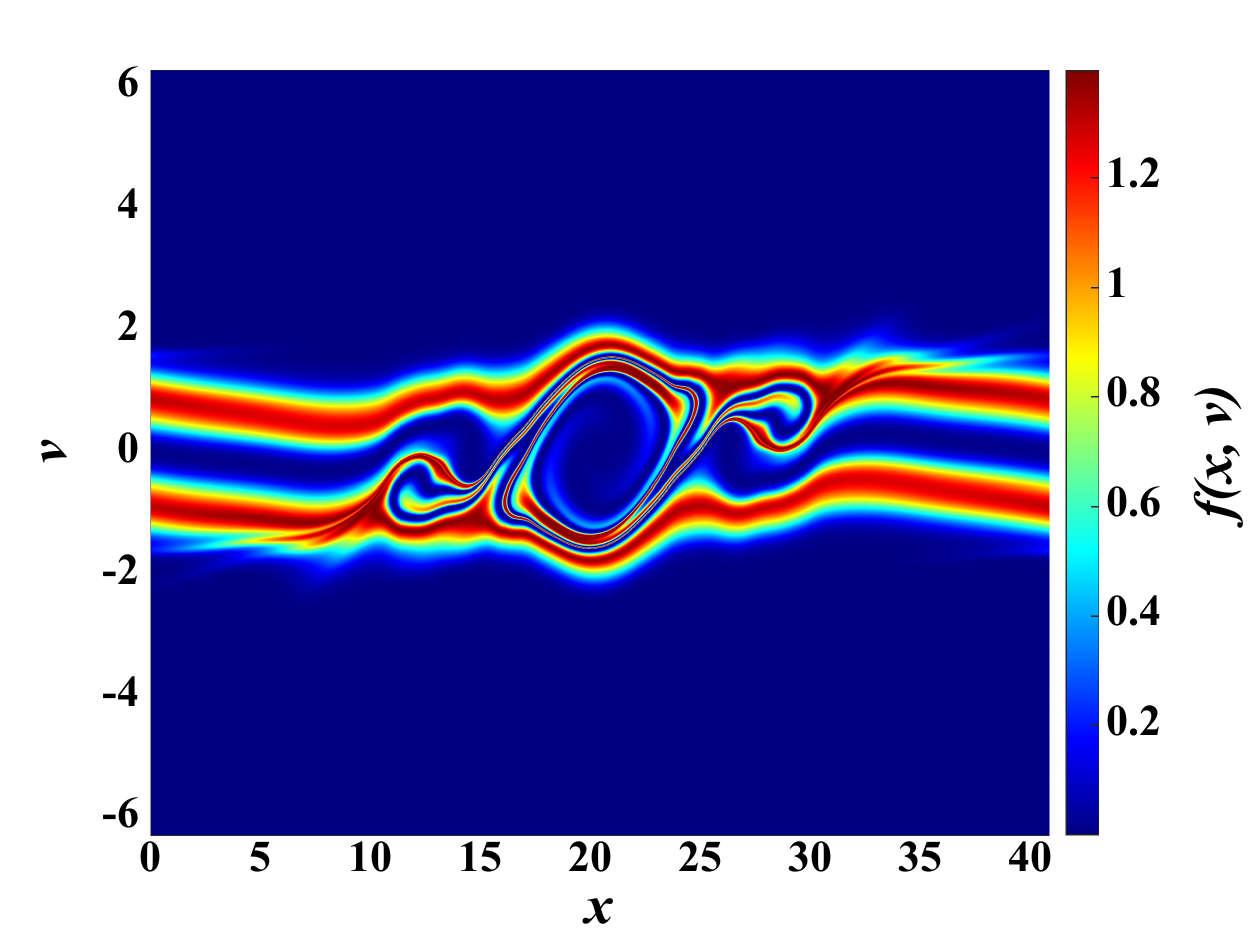}
        \caption{SC-NuF-CF3G, $T=30$.}
        \label{fig:SLF-CF3G_ex5_T30}
    \end{subfigure}
    \hfill
    \begin{subfigure}[b]{0.48\textwidth} 
        \centering
        \includegraphics[width=\linewidth]{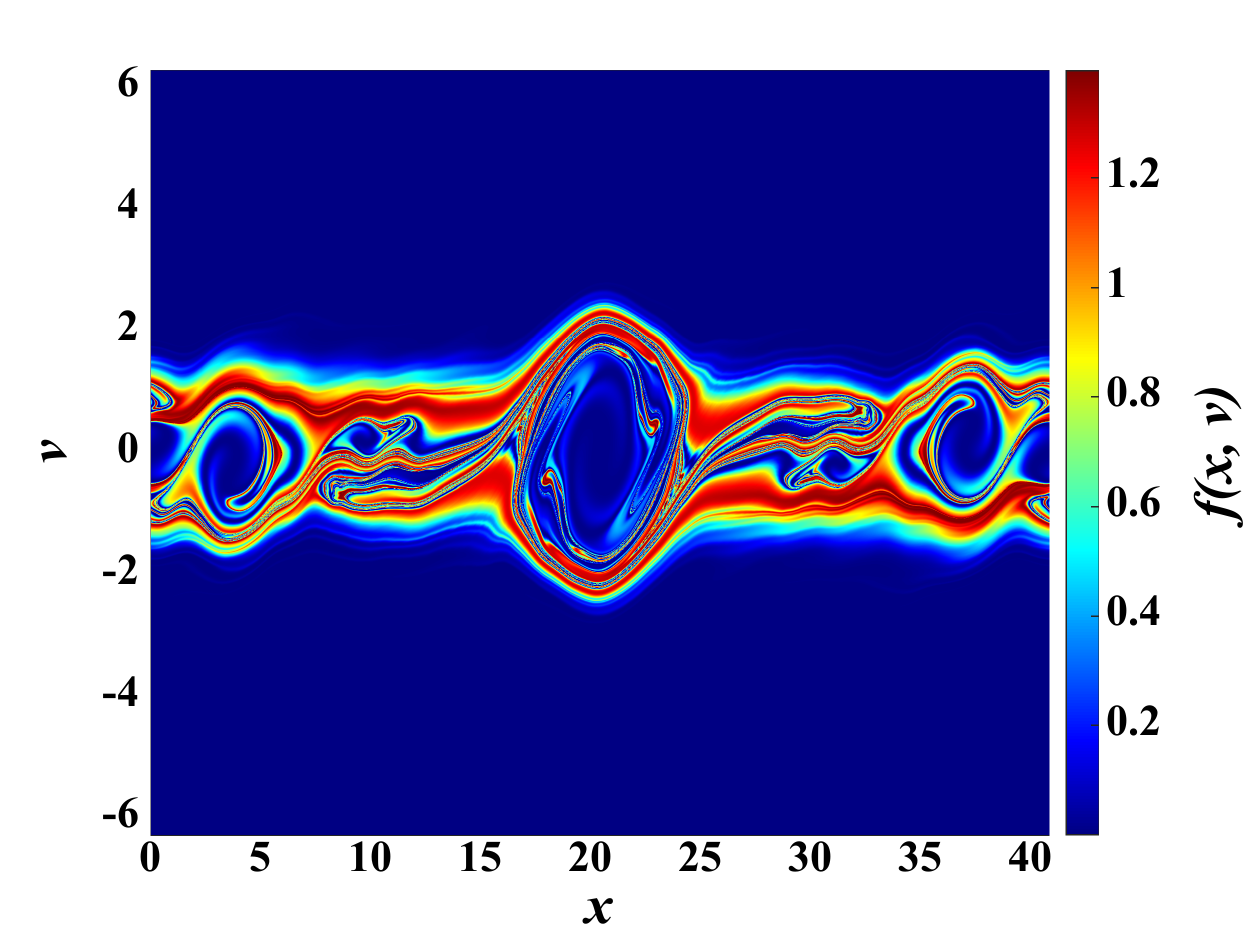}
        \caption{SC-NuF-CF3G, $T=50$.}
        \label{fig:SLF-CF3G_ex5_T50}
    \end{subfigure}
    
    \caption{Two-stream instability. 
    Surface plots of numerical sulotions at $T=30$ (left) and $T=50$ (right). 
    We use $512^2$ phase-space points, with $512$ points in each of
    the $x$ and $v$ directions, and set the time step to $\tau=1/10$.
    Top: SC-NuF-CF2. Bottom: SC-NuF-CF3G.}
    \label{fig:Two_streamII_solution_T30}
\end{figure}

\begin{example}[Two-stream instability]
We next consider the two-stream instability test \cite{CROUSEILLES2010}. The initial
distribution function consists of two symmetric drifting Maxwellian
populations and is given by
\begin{equation}
    f_0(x,v)=\frac{2}{0.6\sqrt{2\pi}}
    \bigl(1+\alpha\cos(kx)\bigr)
    \left[
        \exp\left(-\frac{(v-u)^2}{2(0.3)^2}\right)
        +
        \exp\left(-\frac{(v+u)^2}{2(0.3)^2}\right)
    \right],
\end{equation}
where
$
    \alpha=0.05,~
    k=\frac{2}{13},~
    u=0.99.
$
Accordingly, the spatial period is $L=2\pi/k=13\pi$.
The computation is performed on the phase-space domain
$
    \Omega_x\times\Omega_v
    =
    [0,13\pi]\times[-2\pi,2\pi],
$
with periodic boundary conditions in the $x$-direction.
The simulation uses $512^2$ phase-space points, with $512$ points
in each of the $x$ and $v$ directions, and a time step $\tau=0.1$.

For the two-stream instability test, we focus on the performance of the higher-order schemes with relatively large time step sizes. 
Figure~\ref{fig:Two_streamII_solution_T30} shows the phase-space distributions 
obtained by the SC-NuF-CF2 and SC-NuF-CF3G schemes at \(T=30\) and \(T=50\). 
Both plots display filamentary structures. 
\end{example}

\begin{figure}[htbp]
    \centering
    \begin{subfigure}[b]{0.48\textwidth}
        \centering
        \includegraphics[width=\linewidth]{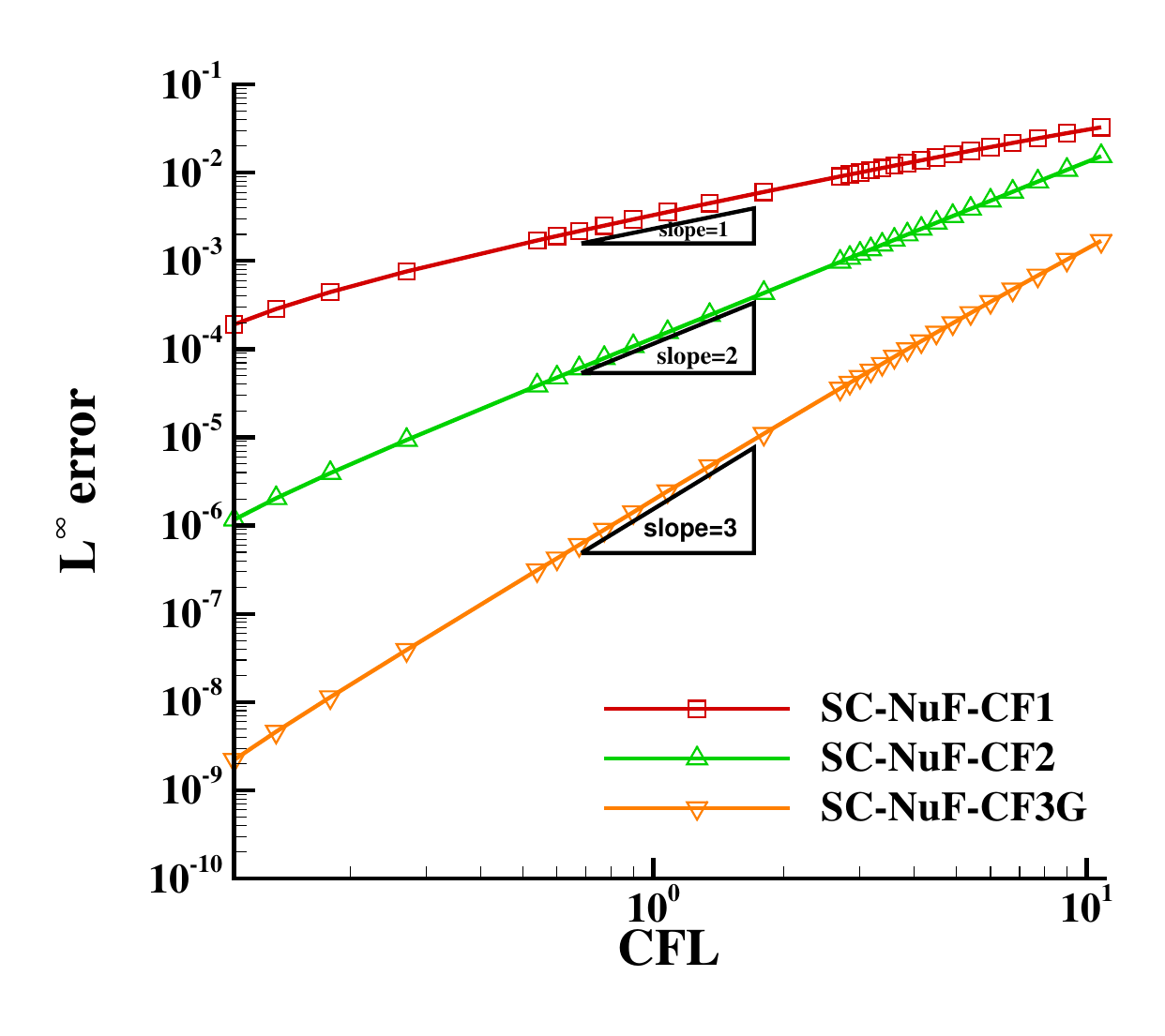}
        \caption{CFL v.s. $L^\infty$ error.}
        \label{fig:SC-NuF-CF2_KH_converge}
    \end{subfigure}
    \hfill
    \begin{subfigure}[b]{0.48\textwidth}
        \centering
        \includegraphics[width=\linewidth]{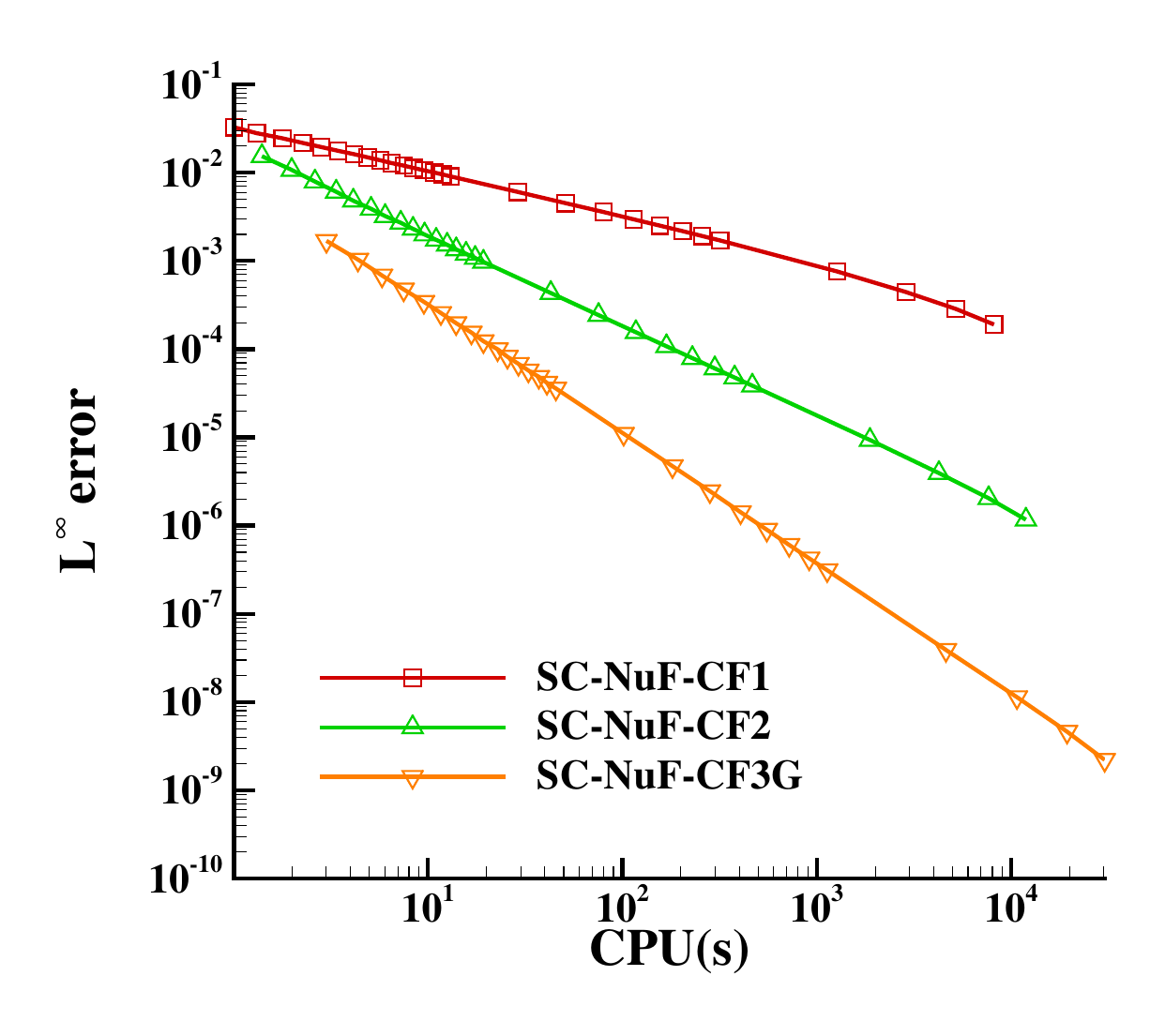}
        \caption{CPU times v.s. $L^\infty$ error.}
        \label{fig:SC-NuF-CF3G_KH_CPUs}
    \end{subfigure}
    \caption{Plots of $L^\infty$ error versus the CFL number (left) and error versus CPU time (right) 
            for Kelvin-Helmholtz instability at $T = 5$.
            Temporal convergence is measured using a separate reference solution for each scheme with $\tau=\frac{1}{1000}$.
            }

    \begin{subfigure}[b]{0.48\textwidth}
        \centering
        \includegraphics[width=\linewidth]{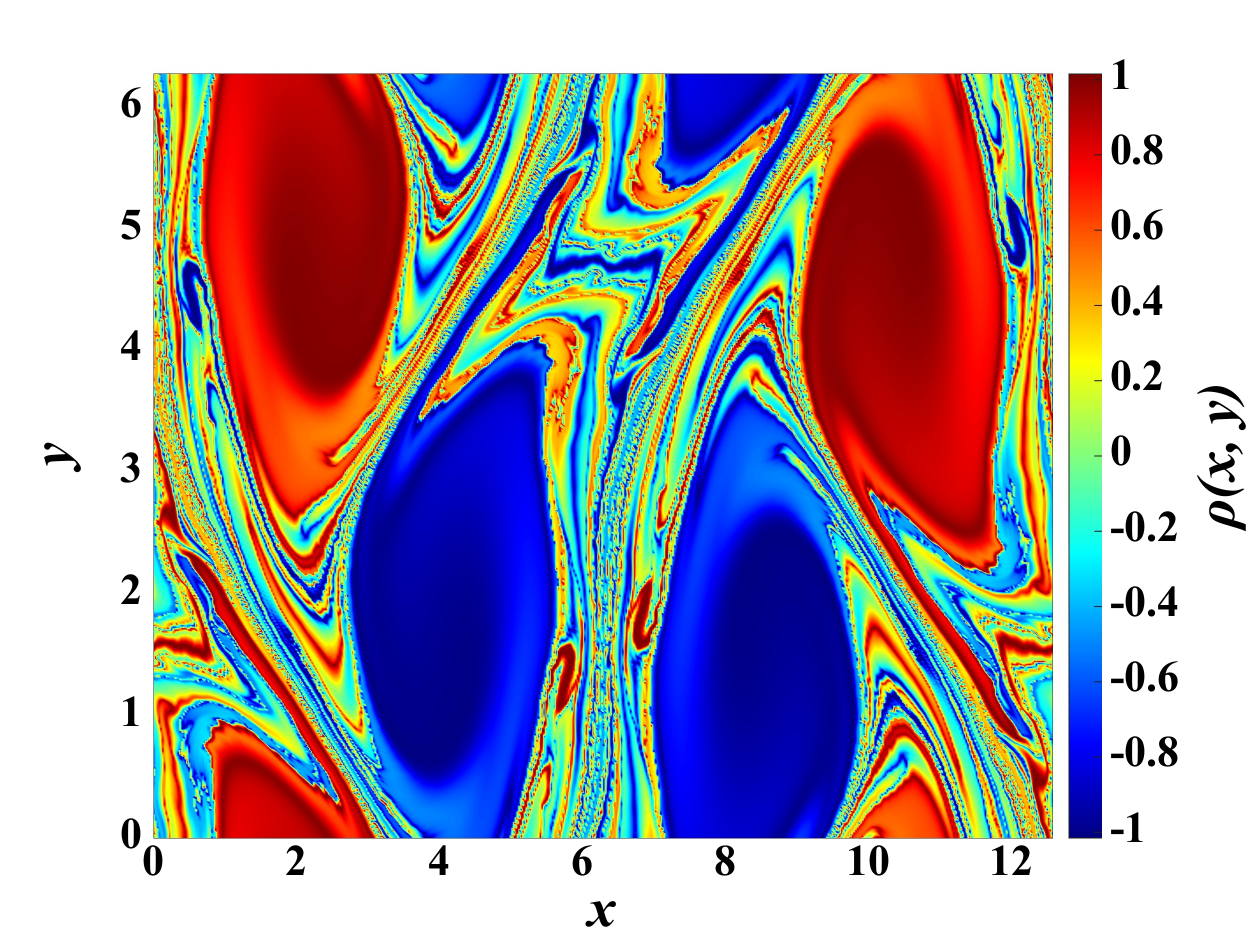}
    \end{subfigure}
    \hfill
    \begin{subfigure}[b]{0.48\textwidth}
        \centering
        \includegraphics[width=\linewidth]{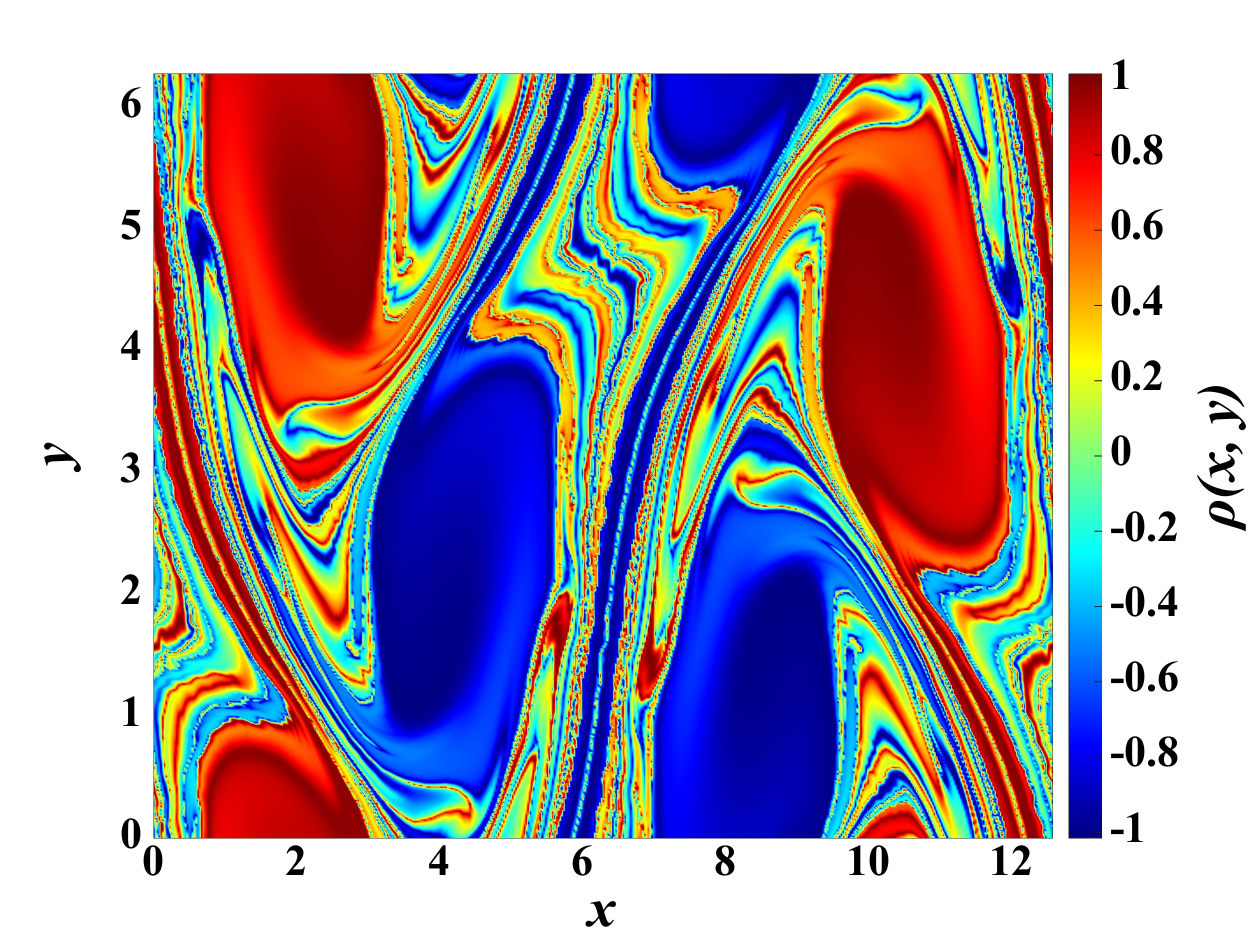}
    \end{subfigure}
    \caption{Kelvin--Helmholtz instability.
    Surface plots of the numerical solutions at $T=40$.
    The electric field is constructed using $128^2$ spatial points,
    with $128$ points in each spatial direction, and the time step is
    $\tau=0.4$.
    For visualization, the numerical solution is evaluated at
    $512^2$ sampling points, with $512$ points in each spatial direction.
    Left: SC-NuF-CF2. Right: SC-NuF-CF3G.}
    \label{fig:KH_plots}
\end{figure}

\subsection{Guiding-center model}

\begin{example}[Kelvin-Helmholtz instability problem]
We consider the guiding-center model \eqref{eq:GDcenter} with the initial condition
\begin{align}
    \rho_0(x_1,x_2) = \sin(x_2) + 0.015~\cos (kx_1),
\end{align}
and periodic boundary conditions on the domain $[0,4\pi]\times[0,2\pi]$. 
The choice $k=0.5$ gives the Kelvin--Helmholtz test studied in \cite{Shoucri1981,Zhu2017,Cai2019,CBQ2021}.

To assess the temporal accuracy, we use $128^2$ spatial quadrature
points, with $128$ points in each spatial direction.
For each scheme, a reference solution is computed with time step $\tau=\frac{1}{1000}$
over \([0,T]\), \(T = 5\).
Figure~\ref{fig:SC-NuF-CF2_KH_converge} shows 
the plotted slopes are consistent with the formal orders over the displayed range.
We report plots of $L^{\infty}$ error versus CPU time in Figure~\ref{fig:SC-NuF-CF3G_KH_CPUs}.

For the long-time simulation of the Kelvin--Helmholtz instability,
we construct the electric field using $128^2$ spatial points,
with $128$ points in each spatial direction, and set
$\tau=0.4$ (CFL$\approx4$).
Figure~\ref{fig:KH_plots} shows surface plots of the numerical
solutions at $T=40$, evaluated at $512^2$ sampling points,
with $512$ points in each spatial direction.
\end{example}

\subsection{2D2V Vlasov--Poisson system tests}
We next consider weak Landau damping and bump-on-tail instability
for the 2D2V Vlasov--Poisson system.
To address the computational cost associated with the four-dimensional
phase space, we use a hybrid CPU--GPU implementation of SC-NuF
for the experiments in this subsection.
Characteristic tracing, spline evaluation, and velocity-moment
integration are parallelized on the GPU, while the Poisson solve
and spline-coefficient construction are performed on the CPU.

The experiments are conducted on a server running 64-bit Windows
Server 2019, equipped with two Intel Xeon Silver 4216 processors
($2.10\,\mathrm{GHz}$, 16 physical cores per processor),
approximately $128\,\mathrm{GiB}$ of system memory,
and one NVIDIA A30 GPU with $24\,\mathrm{GB}$ of device memory.
The solver is implemented in Fortran with CUDA C++ kernels.
The software environment comprises GNU Fortran~15.1.0,
\texttt{fpm}~0.10.0, CUDA Toolkit~11.7,
the MSVC~14.29.30133 toolset, and NVIDIA driver~516.94.
Double-precision floating-point arithmetic is used throughout.
The phase-space domain, resolution, and time step are specified
for each test below.

\begin{figure}[htbp]
    \centering
    \begin{subfigure}[b]{0.48\textwidth}
        \centering
        \includegraphics[width=\linewidth]{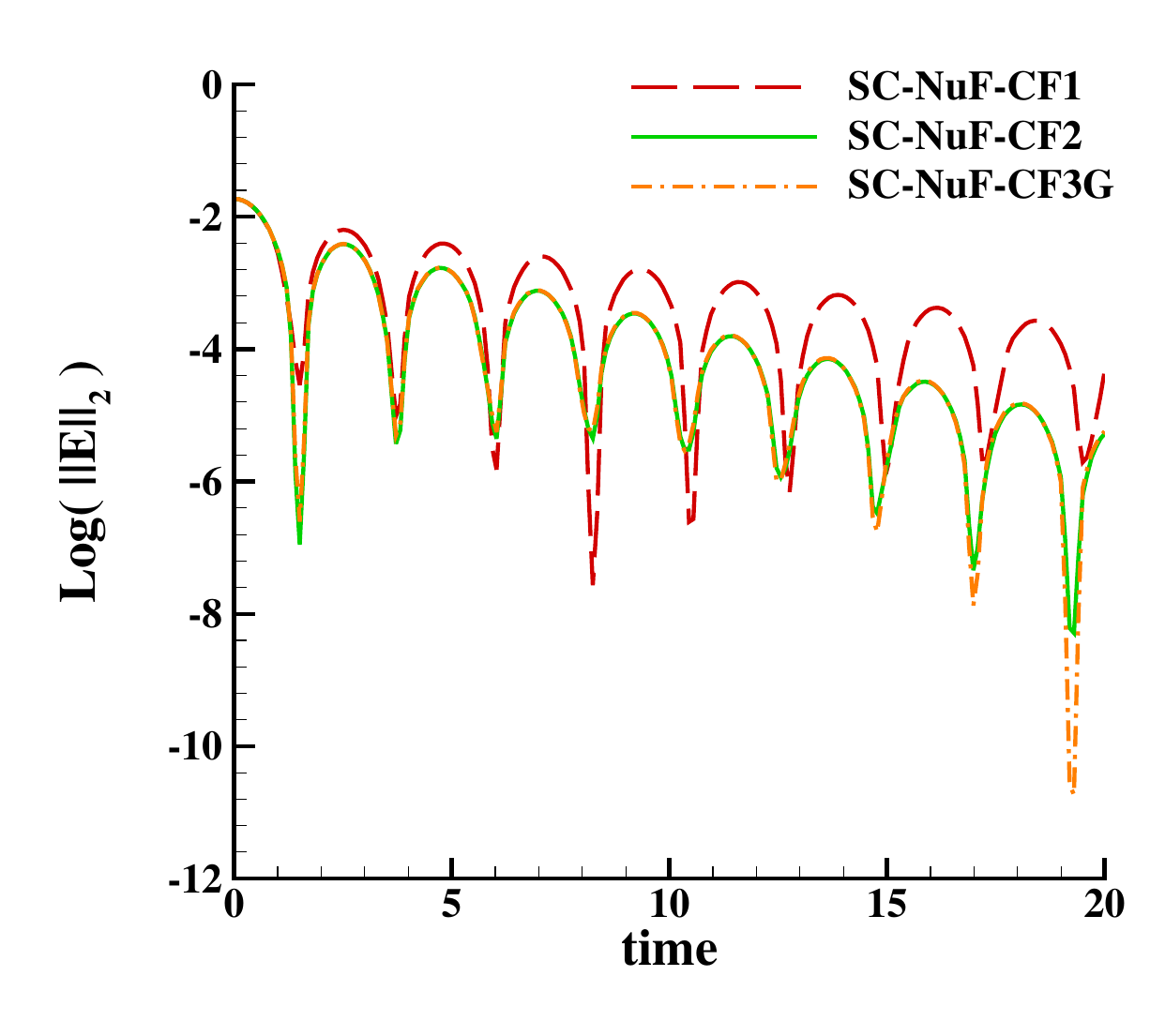}
    \end{subfigure}
    \hfill
    \begin{subfigure}[b]{0.48\textwidth}
        \centering
        \includegraphics[width=\linewidth]{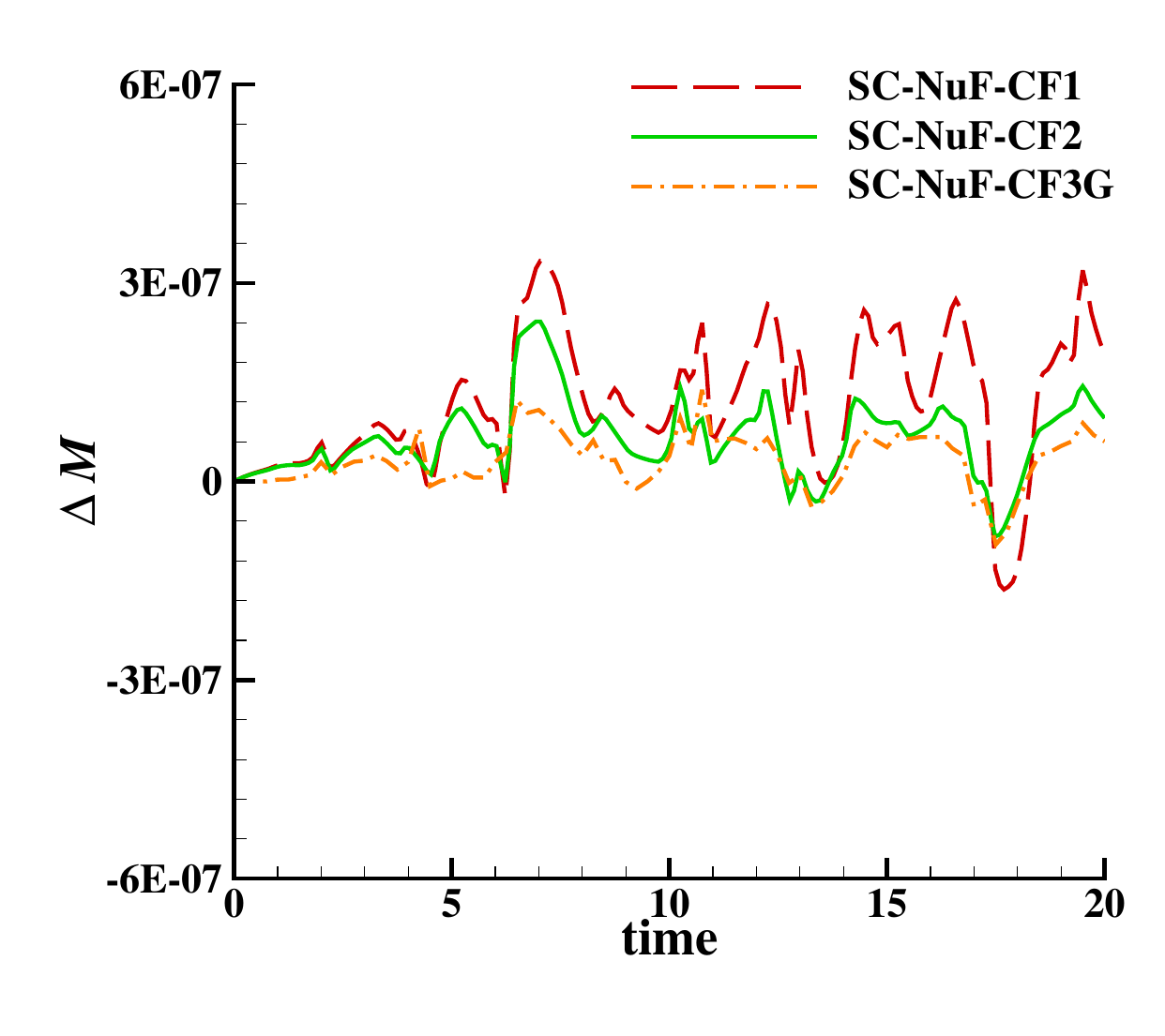}
    \end{subfigure}

    \begin{subfigure}[b]{0.48\textwidth}
        \centering
        \includegraphics[width=\linewidth]{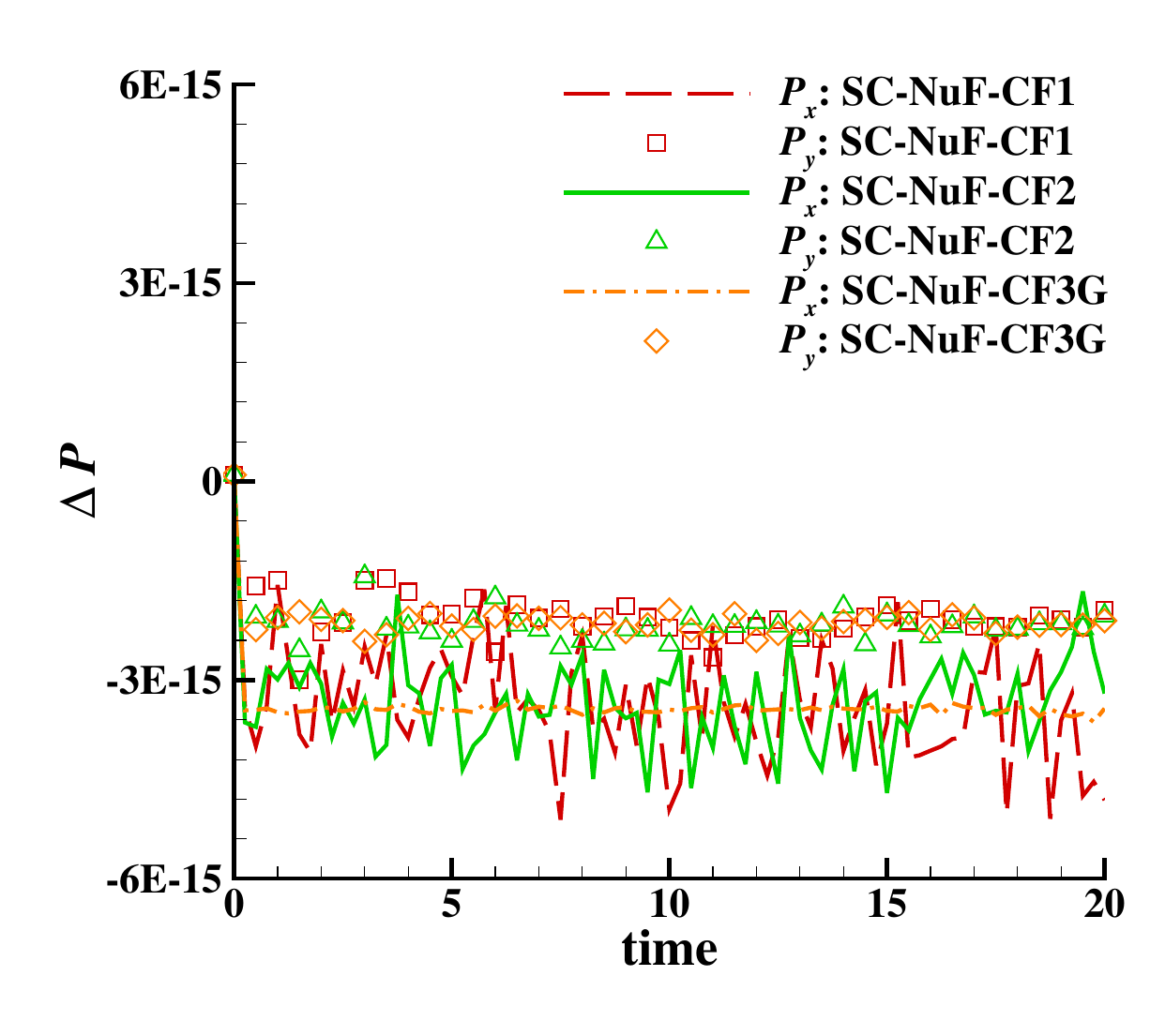}
    \end{subfigure}
    \hfill
    \begin{subfigure}[b]{0.48\textwidth}
        \centering
        \includegraphics[width=\linewidth]{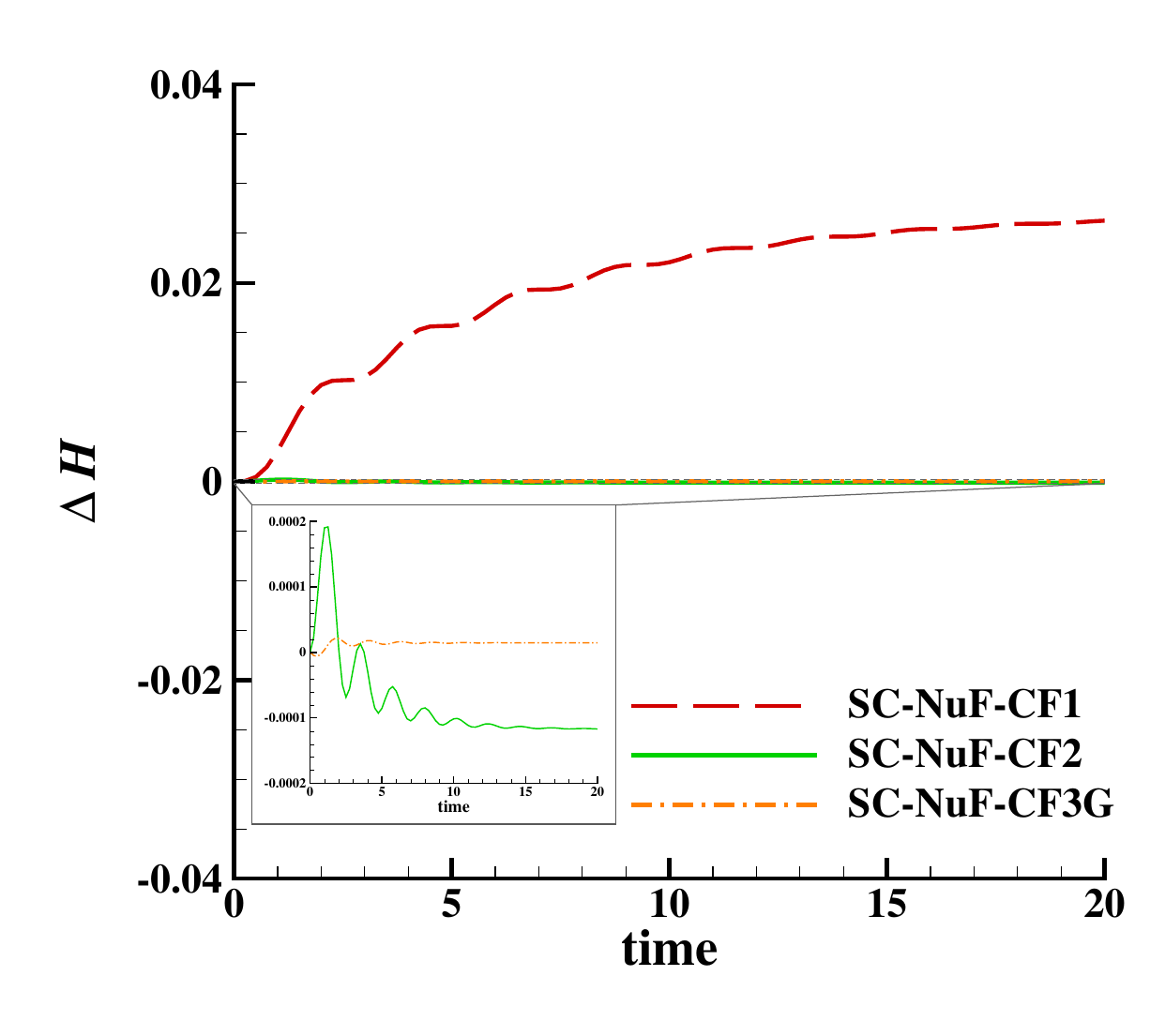}
    \end{subfigure}
    \caption{2D2V weak Landau damping. Evolution of $\log(\|E(t)\|_2)$
    and the signed deviations $M(t)-M(0)$, $P_i(t)-P_i(0)$ ($i=x,y$),
    and $\mathcal{H}(t)-\mathcal{H}(0)$.
    We use $32$ points in each spatial direction and $64$ points in each
    velocity direction, with $\tau=0.25$.}
    \label{fig:2D2V_weak_landau}
\end{figure}

\begin{example}[2D2V Landau damping]
We first consider the 2D2V weak Landau damping problem on the
phase-space domain $[0,L_x]^2\times[-L_v,L_v]^2,$ with $L_x=\frac{2\pi}{k}$ and $L_v=6$.
The initial distribution function is given by
\begin{equation}
    f_0(x_1,x_2,v_1,v_2)
    =
    \frac{1}{2\pi}
    \left[1+\alpha\cos(kx_1)+\alpha\cos(kx_2)\right]
    \exp\left(-\frac{v_1^2+v_2^2}{2}\right),
\end{equation}
where $\alpha=0.01$ and $k=0.5$.

We use $32$ quadrature points in each spatial direction and $64$ in each
velocity direction, with $\tau=0.25$, and integrate to $T=20$.
Figure~\ref{fig:2D2V_weak_landau} shows the electric-field history and
conservation diagnostics. SC-NuF-CF2 and SC-NuF-CF3G produce closely
matching decay envelopes, while SC-NuF-CF1 exhibits a more slowly
decaying envelope.
As in the 1D1V tests, fully discrete volume preservation and conservation
require compatible treatment of velocity quadrature, electric-field
computation and interpolation, and the inner high-order Runge--Kutta
integration of the frozen characteristic equations. Achieving these properties
remains a direction for future work. The small deviations in both momentum
components may again reflect the symmetry of the initial data and do not
establish momentum conservation for general data.
\end{example}

\begin{figure}[htbp]
    \centering
        \begin{subfigure}[b]{0.48\textwidth}
        \centering
        \includegraphics[width=\linewidth]{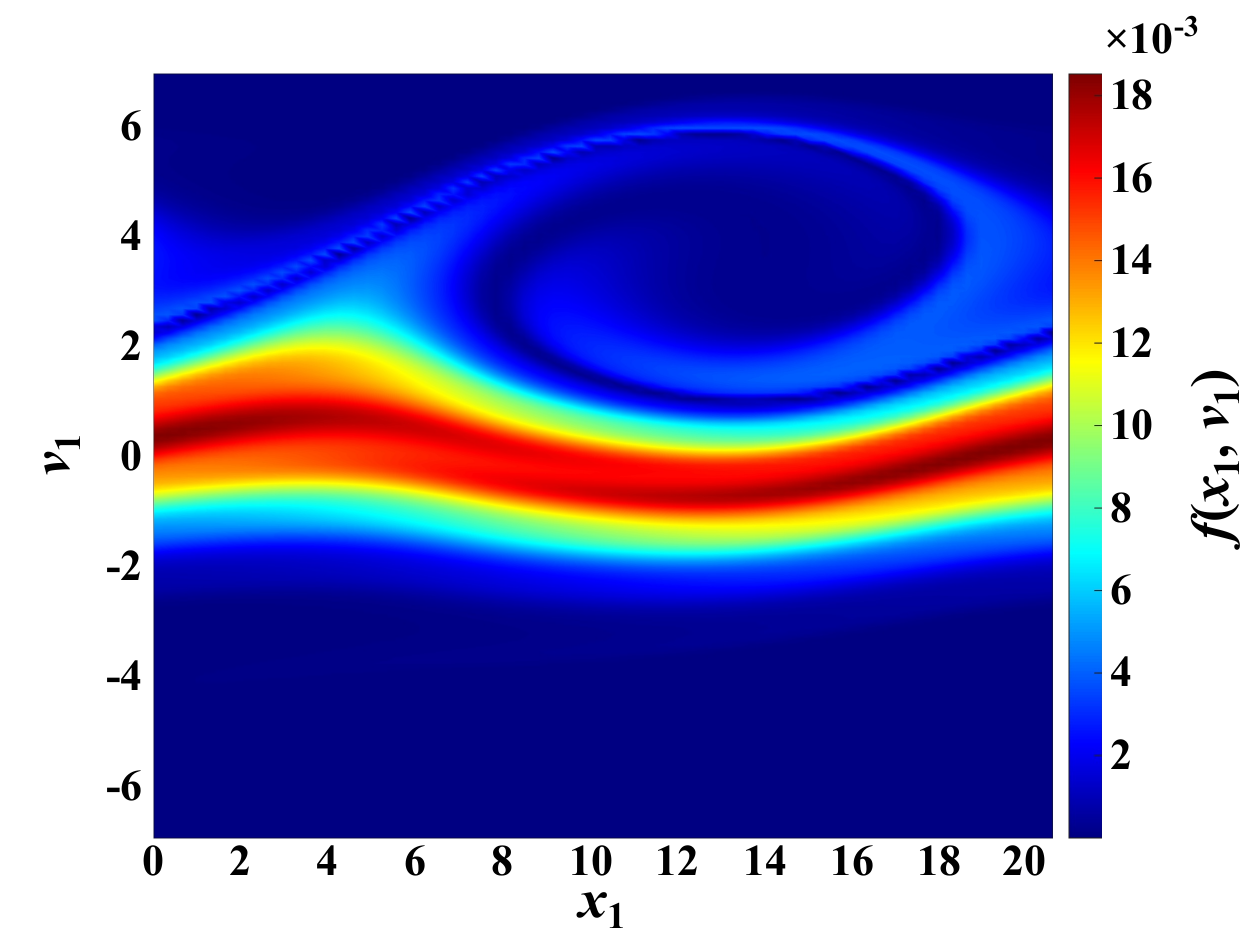}
        \caption{SC-NuF-CF2, $x_1$--$v_1$ slice.}
    \end{subfigure}
    \hfill
    \begin{subfigure}[b]{0.48\textwidth}
        \centering
        \includegraphics[width=\linewidth]{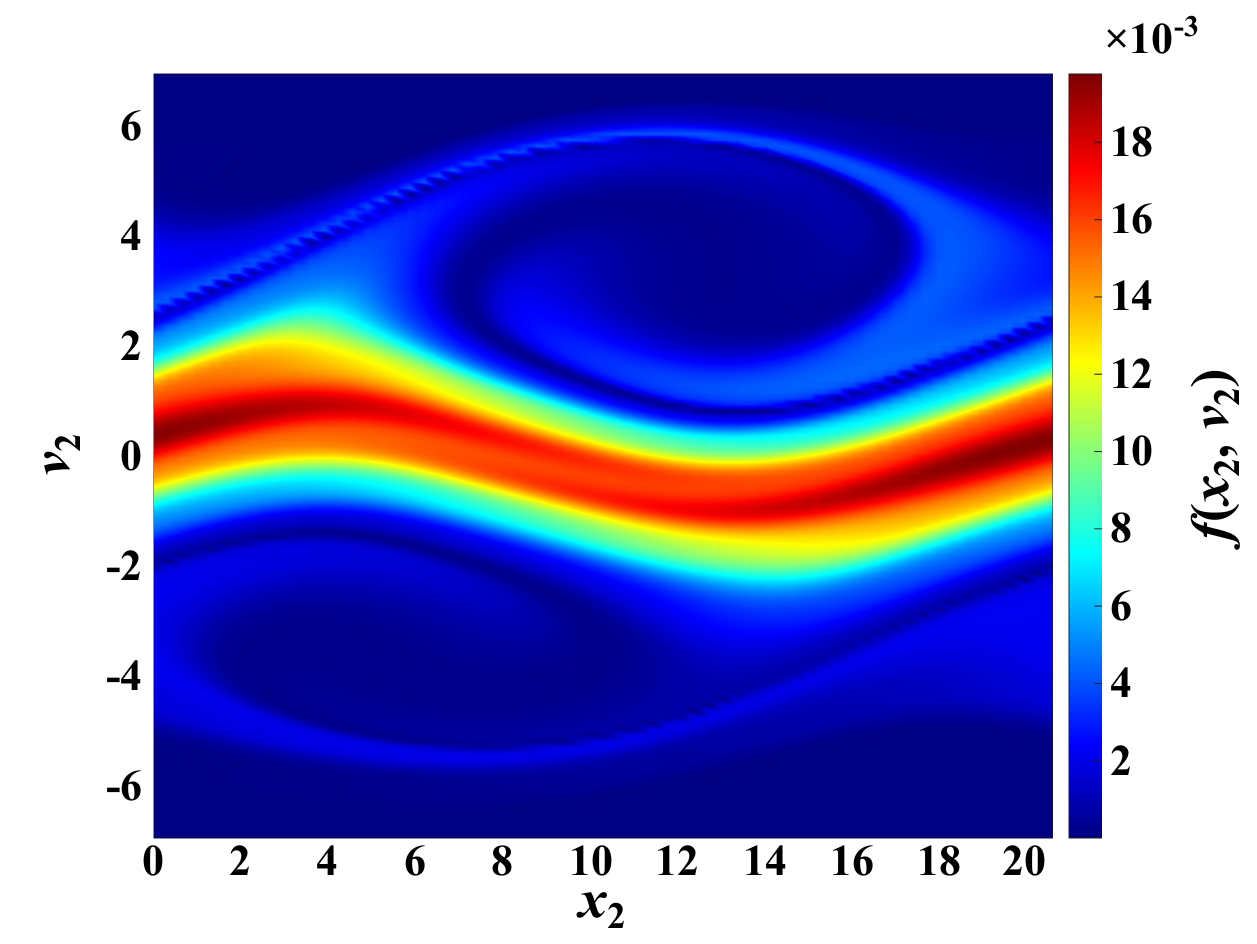}
        \caption{SC-NuF-CF2, $x_2$--$v_2$ slice.}
    \end{subfigure}

    \begin{subfigure}[b]{0.48\textwidth}
        \centering
        \includegraphics[width=\linewidth]{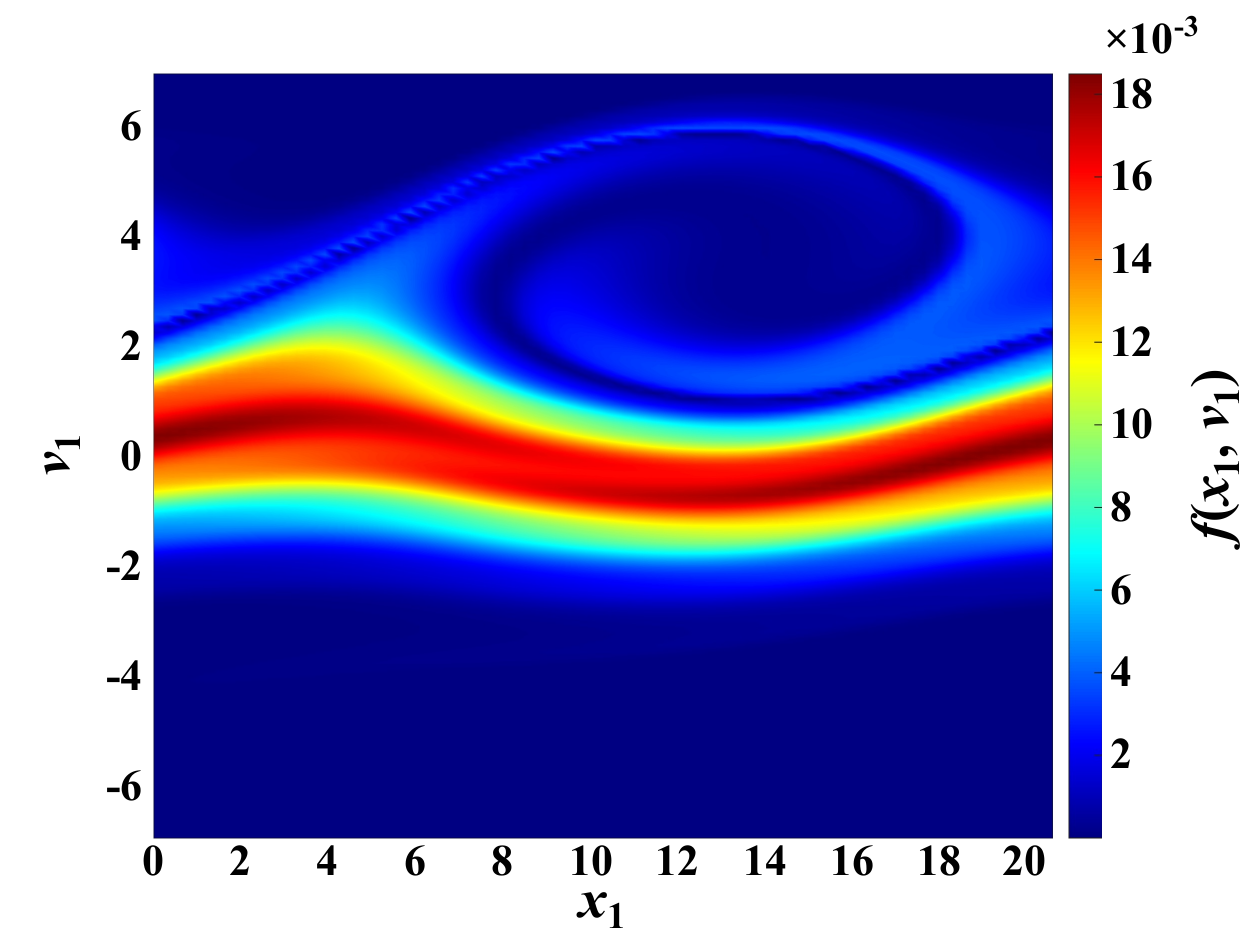}
        \caption{SC-NuF-CF3G, $x_1$--$v_1$ slice.}
    \end{subfigure}
    \hfill
    \begin{subfigure}[b]{0.48\textwidth}
        \centering
        \includegraphics[width=\linewidth]{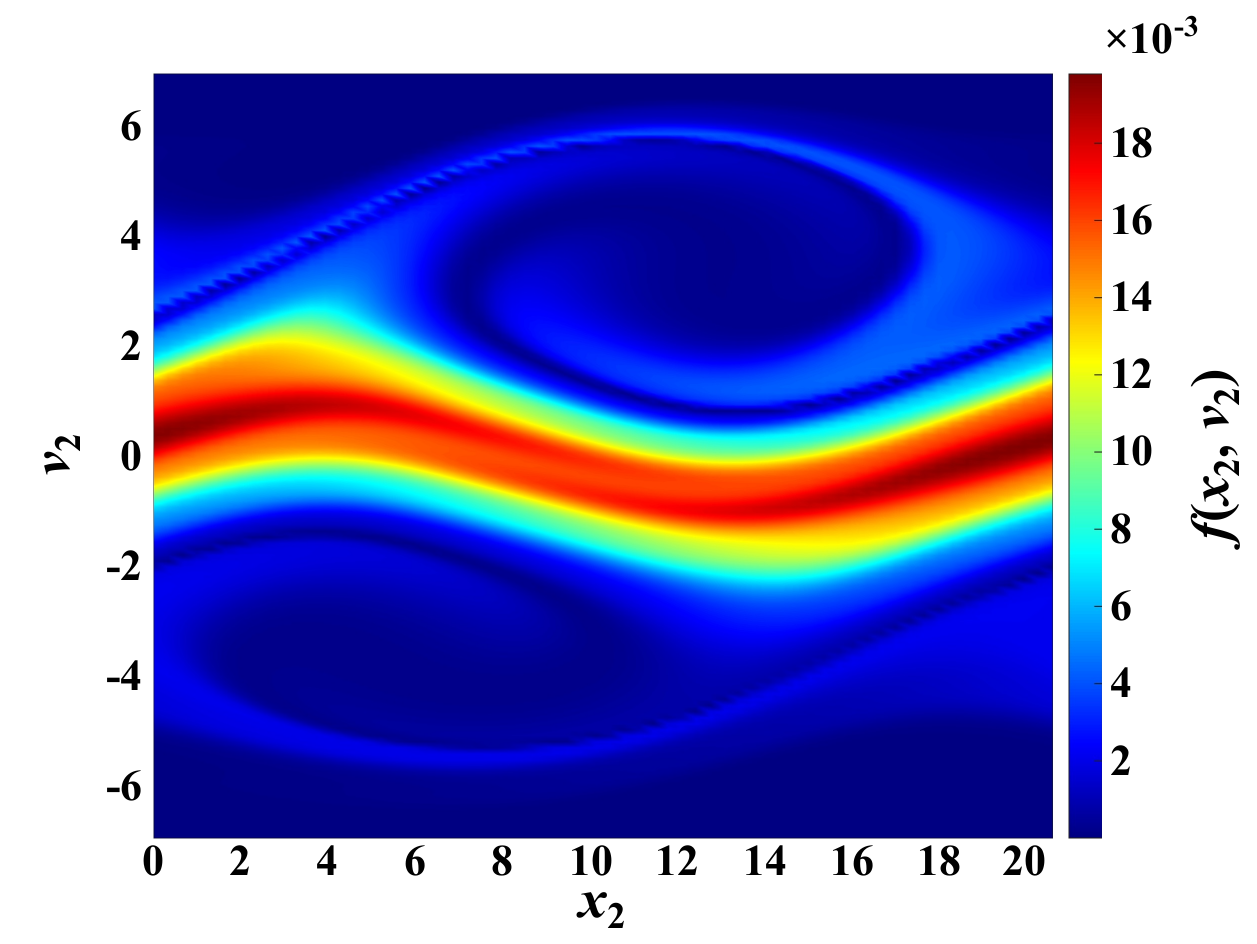}
        \caption{SC-NuF-CF3G, $x_2$--$v_2$ slice.}
    \end{subfigure}
    \caption{2D2V bump-on-tail instability.
    Surface plots of the numerical distribution function at $T=20$.
    The simulation uses $64^2\times 128^2$ phase-space points, with $64$ points
    in each spatial direction and $128$ points in each velocity direction, and a time step $\tau=0.2$.
    Left: the $x_1$--$v_1$ slice of $f(x_1,x_2,v_1,v_2)$
    at $x_2=0$ and $v_2=L_v/2$.
    Right: the $x_2$--$v_2$ slice of $f(x_1,x_2,v_1,v_2)$
    at $x_1=0$ and $v_1=L_v/2$.}
    \label{fig:2D2V_bump_on_tail_plots}
\end{figure}

\begin{example}[2D2V bump-on-tail instability]
We consider the 2D2V bump-on-tail instability problem on the
phase-space domain $[0,L_x]^2\times[-L_v,L_v]^2$ with $L_x=\frac{2\pi}{k}$ and $L_v=7$.
The initial distribution function is given by
\begin{equation}
    f_0(x_1,x_2,v_1,v_2)
    =
    \left[1+\alpha\cos(kx_1)+\alpha\cos(kx_2)\right]
    f_1(v_1)f_2(v_2),
\end{equation}
where
\begin{align*}
    f_1(v_1)
    =
    \sum_{i=1}^{2}
    \frac{\rho_i}{\sqrt{2\pi\theta_i}}
    \exp\left(-\frac{|v_1-u_i|^2}{2\theta_i}\right),\quad
    f_2(v_2)
    =
    \sum_{i=3}^{5}
    \frac{\rho_i}{\sqrt{2\pi\theta_i}}
    \exp\left(-\frac{|v_2-u_i|^2}{2\theta_i}\right).
\end{align*}
The parameters are
\begin{gather*}
    \rho_1=0.9,\qquad
    \rho_2=\rho_4=0.1,\qquad
    \rho_3=0.85,\qquad
    \rho_5=0.05,\\
    u_1=u_3=0,\qquad
    u_2=u_4=-u_5=4.5,\\
    \theta_1=\theta_3=1,\qquad
    \theta_2=\theta_4=\theta_5=0.25,
\end{gather*}
with $\alpha=0.05$ and $k=0.3$.
We apply SC-NuF-CF3G using $64^2\times 128^2$ phase-space points,
with $N_{x_1}=N_{x_2}=64$ and $N_{v_1}=N_{v_2}=128$,
and set the time step to $\tau=0.2$.

Figure~\ref{fig:2D2V_bump_on_tail_plots} shows the $x_1$--$v_1$ and
$x_2$--$v_2$ slices at $T=20$. Both exhibit deformed central populations,
rolled-up structures, and fine filaments. The $x_2$--$v_2$ slice additionally
contains a weaker structure at negative velocities, consistent with the
oppositely drifting populations in $f_2$. 
These features are qualitatively similar to those obtained
with the adaptive Hermite spectral method of Shao et al.~\cite{ShaoWangWu2026}
\end{example}

\section{Conclusion}
\label{sec:Conclusion}

We have developed a flow-map framework for nonlinear transports
based on CF-RKEI and initial-data pullback.
The classical exponential--pullback identity provides the basis for constructing
the self-consistent exact frozen flow by composing exact backward
characteristic maps generated by frozen linear combinations of stage
transport fields.
Its fully discrete realization, the self-consistent numerical flow,
combines field reconstruction with numerical integration of the frozen
characteristics.

A local defect identity quantifies the error introduced by freezing a
transport field. For a Lie--Euler VP step, the resulting estimate
separates the effects of incoming distribution error, field variation
and approximation, and characteristic integration. It explains the
role of solution gradients without requiring a uniform regularity
assumption on all numerical iterates.
For the Vlasov--Poisson system, we have shown that the self-consistent exact
frozen flow is a volume-preserving diffeomorphism under suitable
regularity assumptions.
Temporal self-convergence tests for the Vlasov--Poisson and
guiding-center models yield rates consistent with the CF-RKEI design
orders over the tested range.

Global nonlinear stability and convergence, including the PDE order of
the higher-order compositions, are not established here, and volume preservation is not
automatically inherited by its numerical characteristic maps.
Direct pullback of nonnegative initial data preserves pointwise
nonnegativity, but does not guarantee exact conservation of
quadrature-based mass, momentum, or energy.
Long-time simulations also exhibit drift in some monitored conserved
quantities.
These limitations indicate that both the fully discrete construction
and its analysis require further development.

\section*{Declaration of competing interest}
The authors declared that they have no conflicts of interest to this work. 
We declare that we do not have any commercial or associative interest that represents a conflict of interest in connection with the work submitted.

\section*{Acknowledgments}
The work of both authors was partially supported by the NSFC (No 12671463), 
Guangdong basic and applied basic research foundation [2025A1515012182], 
Guangdong and Hong Kong Universities “1+1+1” Joint Research Collaboration Scheme [2025A0505000014].

\bibliographystyle{elsarticle-num}
\bibliography{references}

\appendix
\section{CF RKEI tableaux}
\label{app:low-order-tableaux}
The corresponding Butcher tableaux of the CF1, CF2 and CF3G methods are presented in
Table~\ref{tab:butcher-low-order}.
\begin{small}

\begin{table}[htbp]

    \centering

    \begin{subtable}[t]{0.42\textwidth}
        \centering

        \begin{tikzpicture}[baseline=(B1.center)]

        \matrix (B1) [
            matrix of math nodes,
            nodes in empty cells,
            nodes={
                minimum width=1.05cm,
                minimum height=0.65cm,
                inner sep=1pt,
                anchor=center
            },
            column sep=0pt,
            row sep=0pt
        ] {
            0 &   \\
              & 1 \\
        };

        \draw[line width=0.4pt]
            (B1-1-1.north east)
            --
            (B1-1-1.north east |- B1-2-1.south east);

        \draw[line width=0.4pt]
            (B1-1-1.south west)
            --
            (B1-1-1.south west -| B1-1-2.south east);

        \end{tikzpicture}

        \caption{Butcher tableau for the CF1 method $(p=1)$.}
        \label{tab:butcher-p1}
    \end{subtable}
    \hfill
    \begin{subtable}[t]{0.42\textwidth}
        \centering

        \begin{tikzpicture}[baseline=(B2.center)]

        \matrix (B2) [
            matrix of math nodes,
            nodes in empty cells,
            nodes={
                minimum width=1.05cm,
                minimum height=0.65cm,
                inner sep=1pt,
                anchor=center
            },
            column sep=0pt,
            row sep=0pt
        ] {
            0        &           \\
            \tfrac12 & \tfrac12  \\
                     & 1         \\
        };

        \draw[line width=0.4pt]
            (B2-1-1.north east)
            --
            (B2-1-1.north east |- B2-3-1.south east);

        \draw[line width=0.4pt]
            (B2-2-1.south west)
            --
            (B2-2-1.south west -| B2-2-2.south east);

        \end{tikzpicture}

        \caption{Butcher tableau for the CF2 method $(p=2)$.}
        \label{tab:butcher-p2}
    \end{subtable}

    \vspace{1em}
    \begin{subtable}[t]{0.42\textwidth}
        \centering
    \begin{tikzpicture}[baseline=(B.center)]

    \matrix (B) [
        matrix of math nodes,
        nodes in empty cells,
        nodes={
            minimum width=1.05cm,
            minimum height=0.65cm,
            inner sep=1pt,
            anchor=center
        },
        column sep=0pt,
        row sep=0pt
    ] {
        0         &               &               &               \\
        \frac12   & \frac12       &               &               \\
        1         & -1            & 2             &               \\
                & \frac1{12}    & \frac13       & -\frac14      \\
                & \frac1{12}    & \frac13       & \frac5{12}    \\
    };

    \draw[line width=0.4pt]
        (B-1-1.north east)
        --
        (B-1-1.north east |- B-5-1.south east);

    \draw[line width=0.4pt]
        (B-3-1.south west)
        --
        (B-3-1.south west -| B-3-4.south east);

    \end{tikzpicture}

    \caption{Butcher tableau for the CF3G method $(p=3)$.}
    \label{tab:butcher-p3}
    \end{subtable}

    \caption{CF RKEI Butcher tableaux.}
    \label{tab:butcher-low-order}
\end{table}
\end{small}
\end{document}